\definecolor{headercolor}{RGB}{255,255,240}
\definecolor{mylinkcolor}{RGB}{0,0,255}
\definecolor{mycitecolor}{RGB}{169,169,169}
\definecolor{myurlcolor}{RGB}{255,20,147}
\newcommand{\PreserveBackslash}[1]{\let\temp=\\#1\let\\=\temp}
\newcolumntype{C}[1]{>{\PreserveBackslash\centering}p{#1}}
\numberwithin{equation}{section} 
\newtheorem{theorem}{Theorem}[section]
\newtheorem{lemma}[theorem]{Lemma}
\newtheorem{coro}[theorem]{Corollary}
\newtheorem{conjecture}[theorem]{Conjecture}
\newtheorem{proposition}[theorem]{Proposition}
\theoremstyle{definition} 
\newtheorem{defn}[theorem]{Definition}
\newtheorem{example}[theorem]{Example}
\newtheorem{remark}[theorem]{Remark}
\def \ifempty#1{\def\temp{#1} \ifx\temp\empty }
\newcommand{\claim}[2][]{%
\ifempty{#1}%
  \par\vspace{2mm}\noindent\underline{Claim}:\;\textit{#2}\\%
\else%
  \par\vspace{2mm}\noindent\underline{Claim~#1}:\;\textit{#2}\\%
\fi
}
\numberwithin{table}{section}
\numberwithin{figure}{section}
\newcommand{\oalpha}{\overline{\alpha}} 
\newcommand{\op}{\overline{\mathfrak{p}}}
\newcommand{\RR}{\mathbf{R}} 
\newcommand{\ZZ}{\mathbf{Z}} 
\newcommand{\QQ}{\mathbf{Q}} 
\newcommand{\CC}{\mathbf{C}} 
\newcommand{\FF}{\mathbf{F}} 
\newcommand{\Fq}{\FF_q}
\newcommand{\Qbar}{\overline{\mathbf{Q}}}
\newcommand{\OO}{\mathscr{O}}
\newcommand{\mfp}{\mathfrak{p}}
\newcommand{\mfq}{\mathfrak{q}}
\newcommand{\mand}{\text{ and }}
\newcommand{\brk}[1]{ \mathopen{}\left\lbrace #1 \right\rbrace\mathclose{}}
\newcommand{\cdef}[1]{{\color{blue}{\textit{#1}}}}
\newcommand{\GitHub}{\textsc{GitHub }}
\newcommand{\indx}{\mathcal{I}}
\newcommand{\rts}{\mathcal{R}}
\newcommand{\cO}{\mathcal{O}} 
\newcommand{\cP}{\mathcal{P}} 
\newcommand{\vecsp}[2]{%
\ensuremath%
#1\langle#2\rangle%
}
\DeclareMathOperator{\rank}{rk}
\DeclareMathOperator{\Gal}{Gal}
\DeclareMathOperator{\Sym}{Sym} 
\renewcommand{\div}{\operatorname{div}}
\DeclareMathOperator{\Div}{Div}
\DeclareMathOperator{\End}{End}
\DeclareMathOperator{\Stab}{Stab}
\DeclareMathOperator{\Span}{span}
\newcommand*{\nfield}[2][]{\href{https://www.lmfdb.org/NumberField/#2}{{\ifx&#1& #2 \else #1 \fi}}}
\newcommand{\avlink}[1]{\href{http://www.lmfdb.org/Variety/Abelian/Fq/#1}{\texttt{#1}}}
\newcommand\newtag[2]{#1\def\@currentlabel{#1}\label{#2}}
\definecolor{wlabcol}{rgb}{0,0,0.3}
\newcommand{\wgrouplabel}[1]{\textnormal{\texttt{#1}}}
\newcommand{\mkwl}[1]{\newtag{\wgrouplabel{#1}}{#1}}
\newcommand{\wl}[1]{%
  \begingroup\hypersetup{linkcolor=mylinkcolor}\ref{#1}\endgroup
}
\newcommand{\cclass}[1]{\mathcal{G}_{#1}}
\newcommand{\thmemph}[1]{\textbf{#1}}
\setlist[itemize]{leftmargin=30pt, itemsep=2pt}
\setlist[enumerate]{leftmargin=30pt, itemsep=2pt}
\definecolor{transitive}{rgb}{0.0, 0.8, 0.6}
\definecolor{non-transitive}{rgb}{0.9, 0.17, 0.31}
\tikzstyle{start} = [rectangle, rounded corners, minimum width = 2cm, minimum height = 1cm, draw = blue, fill = white, text centered, very thick]
\tikzstyle{stop} = [rectangle, rounded corners, minimum width = 2cm, minimum height = 1cm, draw = blue, fill = white, text width = 2cm, text centered, very thick]
\tikzstyle{stop-t} = [rectangle, rounded corners, minimum width = 2cm, minimum height = 1cm, draw = transitive, fill = white, text width = 2cm, text centered, very thick]
\tikzstyle{stop-nt} = [rectangle, rounded corners, minimum width = 2cm, minimum height = 1cm, draw = non-transitive, fill = white, text width = 2cm, text centered, very thick]
\tikzstyle{stage} = [rectangle, minimum width = 1.5cm, minimum height = 1cm, draw = blue, fill = white, text centered, very thick]
\tikzstyle{stage-NP} = [rectangle, minimum width = 2.1cm, minimum height = 1cm, draw = white, fill = white, text centered, very thick]
\tikzstyle{tag} = [rectangle, draw = white, fill = white, text centered]
\tikzstyle{decision} = [diamond,  minimum width = 1.6cm, minimum height = 1cm, draw = black, fill = white, text width = 1.9cm, text centered, thick]
\tikzstyle{input} = [trapezium, trapezium left angle = 70, trapezium right angle = 110, draw = cyan, fill = white, text width = 2cm, text centered]
\tikzstyle{arrow} = [thick, ->, >=stealth, gray]
\title[Galois groups of low dimensional abelian varieties over finite fields]{Galois groups of low dimensional abelian\\ varieties over finite fields}
\date{June 29, 2026}
\author{Santiago Arango-Pi{\~n}eros}
\address{Department of Mathematics, University of Massachusetts Amherst, Amherst, MA 01003, USA}
\email{santiago.arango.pineros@gmail.com}
\urladdr{\url{https://sarangop1728.github.io/}}
\author{Sam Frengley} 
\address{Inria and Laboratoire d'Informatique de l'École polytechnique, CNRS, Institut Polytechnique de Paris, Palaiseau, France}
\email{samuel.frengley@inria.fr}
\urladdr{\url{https://samfrengley.github.io/}}
\author{Sameera Vemulapalli}
\address{Department of Mathematics,
  Harvard University}
\email{vemulapalli@math.harvard.edu}
\urladdr{\url{https://web.math.princeton.edu/~sameerav/}}
\begin{document}

\begin{abstract} 
	We consider three isogeny invariants of abelian varieties over finite fields: the Galois group, Newton polygon, and the angle rank. Motivated by work of Dupuy, Kedlaya, and Zureick-Brown, we define a new invariant called the \emph{weighted permutation representation} which encompasses all three of these invariants and use it to study the subtle relationships between them. We use this permutation representation to classify the triples of invariants that occur for abelian surfaces and simple abelian threefolds.
\end{abstract}

\maketitle

\vspace{-5mm}
\setcounter{tocdepth}{1}
\tableofcontents
\vspace{-5mm}

\section{Introduction}
\label{sec:intro}

The purpose of this article is to study the surprisingly subtle interactions between three isogeny invariants of abelian varieties over finite fields. We analyze which triples of invariants may occur and what restrictions they impose on the abelian variety in question. The interaction of these invariants is discussed in a letter from Serre to Ribet \cite[p.~6]{Serre89} and has gained renewed interest following the publication of the database of abelian varieties over finite fields in the LMFDB \cite{DupuyKedlayaRoeVincent22}. The availability of this data has demonstrated that the interaction between these invariants is more intricate than initially thought \cite{DupuyKedlayaRoeVincent21}, prompting the development of more refined invariants to better understand these relationships \cite{DupuyKedlayaZureick-Brown22}. This article makes progress towards that goal. Even though this subject is interesting in its own right, it has applications to the Tate conjecture \cite{Zarhin15, Zarhin22}, monodromy groups of abelian varieties over number fields \cite{Zywina22}, Frobenius distributions \cite{AhmadiShparlinsky10, APBS2023}, and prime number races and Chebyshev biases in the context of function fields \cite{keliher2024}. 

By the Honda--Tate theorem \cite{Tate1966, Honda1968, Tate1971}, the isogeny class of an abelian variety $A$ is determined by its Frobenius polynomial, which is the characteristic polynomial of the Frobenius endomorphism acting on the $\ell$-adic Tate module of $A$ (where $\ell$ is a prime number which is not equal to the characteristic of the base field $\Fq$).
All of our invariants are derived from the Frobenius polynomial; the first invariant is the \cdef{Newton polygon} of the Frobenius polynomial, which determines the $p$-adic valuations of the roots, the second invariant is the \cdef{angle rank} (see \Cref{defn:angle-rank}), which measures the nontrivial multiplicative relations between the roots of the Frobenius polynomial, and finally, we have the \cdef{Galois group} of the Frobenius polynomial as our third invariant. We classify triples of invariants that occur for abelian varieties of dimension $\leq 3$; the dimension $3$ case already exhibits some subtleties that should be expected in general.

In \cite{DupuyKedlayaZureick-Brown22}, the authors noticed that the Galois group, Newton polygon, and angle rank are not independent. The Galois group acts on the $p$-adic valuations of the roots (we visualize each root as a ball of radius proportional to its $p$-adic valuation) and this \cdef{weighted permutation representation} (see \Cref{def:perm-rep}) determines the Galois group, Newton polygon, and angle rank. However, this does \emph{not} imply that the Galois group and Newton polygon determine the angle rank. For example, the isogeny classes of abelian threefolds over $\FF_2$ with LMFDB \cite{lmfdb} labels \avlink{3.2.ac_a_d} and \avlink{3.2.a_a_ad} have the same Newton polygon and Galois group, but different angle ranks. This example illustrates the need to consider more than just the isomorphism class of the Galois group.

\subsection{Statement of main results}
The authors of \cite{DupuyKedlayaZureick-Brown22} defined the \emph{Newton hyperplane representation} of a geometrically simple abelian variety to encode this information. The central contribution of this paper is to reinterpret the Newton hyperplane representation in terms of a \emph{weighted permutation representation} and prove additional constraints upon it (see \Cref{sec:divisor-map}). We leverage these results to classify weighted permutation representations for elliptic curves (\Cref{lemma:main-thm-ecs}), abelian surfaces (\Cref{thm:main-thm-surfaces}) and simple abelian threefolds (\Cref{thm:main-thm-3folds}).

The flowcharts in Figures~\ref{fig:flowchart2-simple}--\ref{fig:flowchart3} distill information from the tables in our main theorems. The purpose of these flowcharts is to serve as a ``user's guide'' to the tables in \Cref{thm:main-thm-surfaces} and \Cref{thm:main-thm-3folds}; in particular, if one has in hand an abelian surface or threefold, then the flowchart rules out certain Galois groups. 

\begin{theorem}
    \label{thm:main-surfaces}
    If $A$ is an abelian surface, then the possible isomorphism classes of the Galois group $G_A$ are determined by \Cref{fig:flowchart2-simple} in the simple case, and by \Cref{fig:flowchart2-non-simple} in the non-simple case. Moreover, every possibility occurs.
\end{theorem}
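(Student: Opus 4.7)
The plan hinges on the fact that the Frobenius polynomial of an abelian surface is a $q$-Weil polynomial of degree $4$ whose roots come in reciprocal pairs $\{\alpha,\, q/\alpha\}$, so $G_A$ embeds into the hyperoctahedral group $W = C_2 \wr S_2$ of order $8$ via its action on the four roots. Simplicity of $A$ forces the Frobenius polynomial to be a power of an irreducible and hence the action of $G_A$ on the four roots to be transitive; nonsimplicity corresponds to the intransitive actions. Hence the two flowcharts are governed, respectively, by the transitive and intransitive subgroups of $W$, and the argument splits cleanly along these lines.

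For the simple case, I would enumerate the transitive subgroups of $W$ and, for each, compute the compatible weighted permutation representations using the constraints developed in \Cref{sec:divisor-map}. The $p$-adic weights on the roots determine the Newton polygon while the rank of the associated divisor map determines the angle rank, and the real Weil polynomial (the characteristic polynomial of $F + qF^{-1}$) being itself a Weil polynomial of degree $2$ supplies an extra constraint via the quotient $W \twoheadrightarrow S_2$. Navigating \Cref{fig:flowchart2-simple} then corresponds to progressively cutting out compatible triples using invariants of $A$ (irreducibility of the Frobenius polynomial, Newton polygon slopes, presence of proper subfields of the splitting field), and one must check branch-by-branch that the only remaining Galois group in each leaf is the one stated.

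For the nonsimple case, $A$ decomposes up to isogeny as $E_1 \times E_2$ with elliptic curves $E_i$ (possibly with $E_1 \sim_{\Fq} E_2$), and the Frobenius polynomial factors accordingly. The analysis reduces to \Cref{lemma:main-thm-ecs} applied factorwise, combined with the relationship between the splitting fields of the two factors; the branches of \Cref{fig:flowchart2-non-simple} are governed by whether $E_1$ and $E_2$ are isogenous, share a common quadratic splitting field, or are generic. For the ``every possibility occurs'' direction, I would run a computational search across the LMFDB database of abelian surfaces over small finite fields to exhibit an explicit isogeny class for every cell of the classification. The main obstacle is not the enumeration but the refinement: the subtle interplay between invariants (visible already in dimension three with \avlink{3.2.ac_a_d} versus \avlink{3.2.a_a_ad}) means that some combinatorially plausible triples are actually forbidden, and the hardest step is verifying that every such forbidden triple is genuinely ruled out by the weighted permutation representation constraints of \Cref{sec:divisor-map}, rather than merely being absent from the range of the LMFDB search.
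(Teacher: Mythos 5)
Your overall strategy---embed $G_A$ in $W_4$, enumerate subgroups, cut down the combinatorially possible (Galois group, Newton polygon, angle rank) triples using the constraints of \Cref{sec:divisor-map}, and certify existence with LMFDB examples---is the same as the paper's, and your reduction of the nonsimple case to $E_1\times E_2$ and \Cref{lemma:main-thm-ecs}, branching on whether the two quadratic fields coincide, matches the proof of \Cref{thm:main-thm-surfaces} closely.

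There is, however, a genuine error in your setup: the dichotomy ``simple $\Leftrightarrow$ transitive on the four roots'' is false, and its failure is exactly what produces the $C_2$ leaf of \Cref{fig:flowchart2-simple}. Two things go wrong. First, a simple abelian surface can have $P_A(T)=h_A(T)^2$ with $h_A$ quadratic (the case $e_A=2$ of Honda--Tate), so there are only two distinct roots and the permutation representation lands in $W_2$, not $W_4$; these are supersingular surfaces, handled in the paper by the classification of Xing and Maisner--Nart as a separate bullet of \Cref{thm:main-thm-surfaces}. Second, even inside $W_4$, the simple supersingular surface with $h_A(T)=T^2-q$ ($q$ a nonsquare, \Cref{lem:real-eigenvalues}) has real Frobenius eigenvalues, and after the duplication convention of \Cref{def:roots} its Galois group $C_2$ acts \emph{intransitively} on $X_4$ (the classes \wl{C2.4.nt.c.1} and \wl{C2.4.nt.c.2} in \Cref{tab:dim2-C}). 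Since the transitive subgroups of $W_4$ are $W_4$, $C_4$, and $V_4$, all of order at least $4$, an enumeration restricted to transitive subgroups can never output $C_2$ for a simple surface, and your version of the simple flowchart would be missing a leaf. Beyond this, your plan leaves the key exclusions as promissory notes: the paper rules out $C_4$ and $V_4$ for simple almost ordinary surfaces via \Cref{prop:div-properties}\ref{enum:div-property-4} (this is the divisor-map constraint you invoke), but it rules out $W_4$ for supersingular surfaces by a rank computation on the Newton hyperplane matrix (\Cref{lemma:big-G}), and separates $W_4/C_4$ from $V_4$ in the ordinary case by relating angle rank to geometric simplicity (\Cref{coro:not-geom-simple=>angle-rank-1} and \Cref{lemma:abs-simp-ord=>max-angle-rank}); neither of the latter two follows formally from the divisor-map properties alone.
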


\begin{theorem}
    \label{thm:main-simple-threefolds}
    If $A$ is a simple abelian threefold, then the possible isomorphism classes of the Galois group $G_A$ are determined by \Cref{fig:flowchart3}. Moreover, every possibility occurs.
\end{theorem}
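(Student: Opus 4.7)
The plan is to deduce this theorem as a corollary of the more refined classification in \Cref{thm:main-thm-3folds}, which catalogs all weighted permutation representations $(G_A, \rho_A)$ that arise for simple abelian threefolds over finite fields. Since the weighted permutation representation determines the Galois group, Newton polygon, and angle rank simultaneously (as recalled in the introduction), a case analysis of that table yields the list of Galois groups $G_A$ together with the Newton polygons and angle ranks that force or are compatible with each choice. The flowchart \Cref{fig:flowchart3} is then obtained by branching first on the Newton polygon slopes available for a simple threefold and then, where necessary, on the angle rank.

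First I would enumerate the possible Newton polygons for a simple abelian threefold: the ordinary polygon with slopes $(0^3, 1^3)$, the almost ordinary polygon with slopes $(0^2, (1/2)^2, 1^2)$ (which can only occur with additional restrictions when the variety is simple), the slopes $(1/3, 2/3)$ associated to a height-6 formal group (each slope with multiplicity $3$), and the supersingular polygon $(1/2)^6$. For each of these, the Frobenius polynomial is irreducible of degree $6$, so $G_A$ acts transitively on the six roots and sits inside the hyperoctahedral group $C_2 \wr S_3$ of order $48$. This places \emph{a priori} restrictions coming from transitivity on the $p$-adic valuation pattern: the Galois action must permute the slopes, so for example in the slope $(1/3, 2/3)$ case the projection to $S_3$ must act transitively on the three slope-$1/3$ (equivalently slope-$2/3$) roots.

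Next I would match each row of \Cref{thm:main-thm-3folds} to a unique branch of \Cref{fig:flowchart3}, reading off $G_A$ as an abstract group from the weighted permutation data. The structural constraints proved in \Cref{sec:divisor-map} (Serre's observation that $G_A$ acts on slope multisets, together with the additional constraints placed on the weighted permutation representation) are what rule out the Galois groups that do \emph{not} appear on the flowchart; I would invoke these rather than reproving them. The main obstacle is bookkeeping: making sure that two distinct weighted permutation representations sharing the same underlying abstract group $G_A$ are not counted as different leaves of the flowchart, and conversely that every compatible $(G_A, \text{Newton polygon}, \text{angle rank})$ triple permitted by the weighted-permutation table does appear.

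Finally, for the realizability statement I would produce, for each leaf of the flowchart, an explicit isogeny class in the LMFDB database of abelian varieties over $\mathbb{F}_q$ \cite{lmfdb, DupuyKedlayaRoeVincent22} whose Frobenius polynomial has the required Galois group, Newton polygon, and angle rank; these examples can be certified directly from their Weil polynomials. In each case the polynomial is irreducible (so the isogeny class is simple by Honda--Tate), its Newton polygon and Galois group can be computed from the coefficients, and the angle rank can be verified by checking the rank of the multiplicative relations among the roots. This step is essentially a tabulation; the nontrivial content of the theorem is the classification side, which is already carried out in \Cref{thm:main-thm-3folds}.
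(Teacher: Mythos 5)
Your approach matches the paper's: \Cref{thm:main-simple-threefolds} is not given a separate proof there but is exactly a distillation of the classification in \Cref{thm:main-thm-3folds} (whose tables record, for each Newton polygon, which $w_A$-conjugacy classes of weighted permutation representations occur, with LMFDB witnesses), and the flowchart is obtained by forgetting the weighted permutation representation down to the triple (Newton polygon, angle rank, isomorphism class of $G_A$). One concrete slip: your enumeration of Newton polygons for a simple abelian threefold lists only four, omitting the $p$-rank one polygon with slopes $(0,\tfrac12,\tfrac12,\tfrac12,\tfrac12,1)$, which is branch (C) of \Cref{fig:flowchart3} and has its own table (\Cref{tab:threefoldC}); there are five admissible symmetric integral Newton polygons in dimension $3$, not four. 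Since you defer to the tables of \Cref{thm:main-thm-3folds} for the actual case analysis, this would be self-correcting in practice, but as written your flowchart reconstruction would be missing a branch.
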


\begin{figure}[p]
    \centering
    \resizebox{0.8\textwidth}{!}{
    \begin{tikzpicture}[node distance = 2.7cm]
    \node (start) [start] {Simple abelian surface over $\mathbf{F}_q$};
    \node (B) [stage-NP, below of = start, yshift=-1cm] {\includegraphics[scale=0.45]{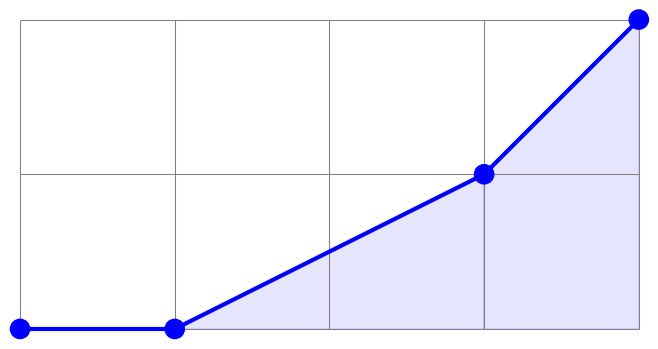}};
    \node (B-tag) [tag, above of=B, yshift=-1cm, xshift=-1cm] {(B)};
    \node (A) [stage-NP, left of = B, xshift = -3cm] {\includegraphics[scale=0.45]{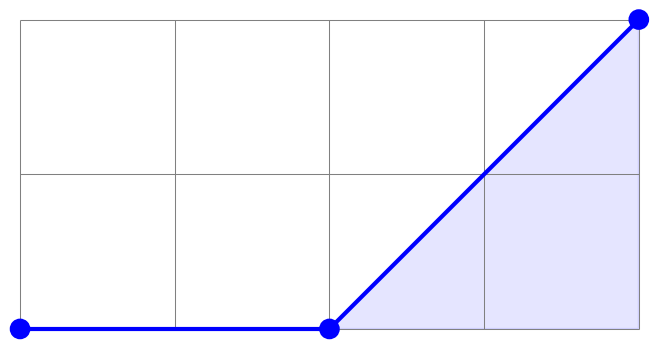}};
    \node (A-tag) [tag, above of=A, yshift=-1cm] {(A)};
    \node (C) [stage-NP, right of = B, xshift = 3cm] {\includegraphics[scale=0.45]{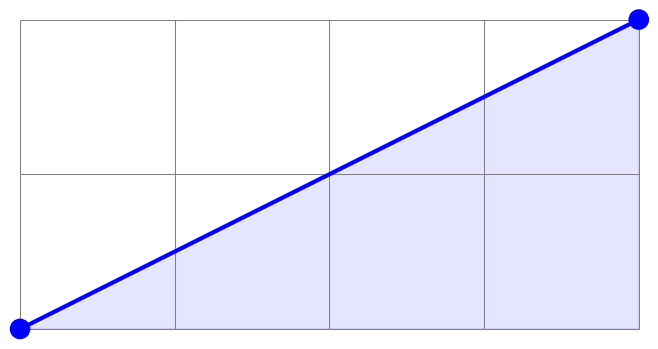}};
    \node (C-tag) [tag, above of=C, yshift=-1cm] {(C)};
    \node (C-delta0) [stage, below of=C,yshift=-1cm] {$\delta_A = 0$};
    \node (A-delta2) [stage, below of=A,xshift=-1.5cm,yshift=-1cm] {$\delta_A = 2$};
    \node (A-delta1) [stage, below of=A,xshift=1.5cm,yshift=-1cm] {$\delta_A = 1$};
    \node (B-delta2) [stage, right of=A-delta1,xshift=1.6cm] {$\delta_A = 2$};
    \node (W4) [stop, below of=B, yshift=-4cm, xshift = -6cm] {$C_2\wr S_2$};
    \node (C4) [stop, below of=B, yshift=-4cm, xshift=-2cm] {$C_4$};
    \node (V4) [stop, below of=B, yshift=-4cm, xshift = 2cm] {$V_4$};
    \node (C2) [stop, below of=B, yshift=-4cm, xshift = 6cm] {$C_2$};
    \draw[arrow] (start) -- (A);
    \draw[arrow] (start) -- (B);
    \draw[arrow] (start) -- (C);
    \draw[arrow] (B) -- (B-delta2);
    \draw[arrow] (C) -- (C-delta0);
    \draw[arrow] (A) -- (A-delta2);
    \draw[arrow] (A) -- (A-delta1);
    \draw[arrow] (B-delta2) -- (W4);
    \draw[arrow] (A-delta2) -- (W4);
    \draw[arrow] (A-delta2) -- (C4);
    \draw[arrow] (A-delta1) -- (V4);
    \draw[arrow] (C-delta0) -- (V4);
    \draw[arrow] (C-delta0) -- (C2);
    \end{tikzpicture}    
    }
    \caption{Possible isomorphism classes of Galois groups of simple abelian surfaces in terms of their Newton polygon, and angle rank $\delta_A$.} \label{fig:flowchart2-simple}
\end{figure}

\begin{figure}[p]
    \centering
    \resizebox{0.8\textwidth}{!}{
    \begin{tikzpicture}[node distance = 2.7cm]
    \node (start) [start] {Non-simple abelian surface $E_1\times E_2$ over $\mathbf{F}_q$};
    \node (B) [stage-NP, below of = start, yshift=-1cm] {\includegraphics[scale=0.45]{images/ao_2.png}};
    \node (B-tag) [tag, above of=B, yshift=-1cm, xshift=-1cm] {(B)};
    \node (A) [stage-NP, left of = B, xshift = -3cm] {\includegraphics[scale=0.45]{images/o_2.png}};
    \node (A-tag) [tag, above of=A, yshift=-1cm] {(A)};
    \node (C) [stage-NP, right of = B, xshift = 3cm] {\includegraphics[scale=0.45]{images/ss_2.png}};
    \node (C-tag) [tag, above of=C, yshift=-1cm] {(C)};
    \node (B-delta1) [stage, below of=B,yshift=-1cm] {$\delta_A = 1$};
    \node (C-delta0) [stage, below of=C,yshift=-1cm] {$\delta_A = 0$};
    \node (A-delta2) [stage, below of=A,xshift=-1.5cm,yshift=-1cm] {$\delta_A = 2$};
    \node (A-delta1) [stage, below of=A,xshift=1.5cm,yshift=-1cm] {$\delta_A = 1$};
    \node (V4) [stop, below of=B, yshift=-4cm, xshift = -4cm] {$V_4$};
    \node (C2) [stop, below of=B, yshift=-4cm] {$C_2$};
    \node (C1) [stop, below of=B, yshift=-4cm, xshift = 4cm] {$C_1$};
    \draw[arrow] (start) -- (A);
    \draw[arrow] (start) -- (B);
    \draw[arrow] (start) -- (C);
    \draw[arrow] (B) -- (B-delta1);
    \draw[arrow] (C) -- (C-delta0);
    \draw[arrow] (A) -- (A-delta2);
    \draw[arrow] (A) -- (A-delta1);
    \draw[arrow] (A-delta2) -- (V4);
    \draw[arrow] (A-delta1) -- (C2);
    \draw[arrow] (B-delta1) -- (V4);
    \draw[arrow] (B-delta1) -- (C2);
    \draw[arrow] (C-delta0) -- (V4);
    \draw[arrow] (C-delta0) -- (C2);
    \draw[arrow] (C-delta0) -- (C1);
    \end{tikzpicture}    
    }
    \caption{Possible isomorphism classes of Galois groups of non-simple abelian surfaces in terms of their Newton polygon, and angle rank $\delta_A$.} \label{fig:flowchart2-non-simple}
\end{figure}

\begin{figure}
    \centering
    \resizebox{\textwidth}{!}{
    \begin{tikzpicture}[node distance = 2.7cm]
    \node (start) [start] {Simple abelian threefold over $\mathbf{F}_q$};
    \node (C) [stage-NP, below of = start, yshift=-1cm] {\includegraphics[scale=0.36]{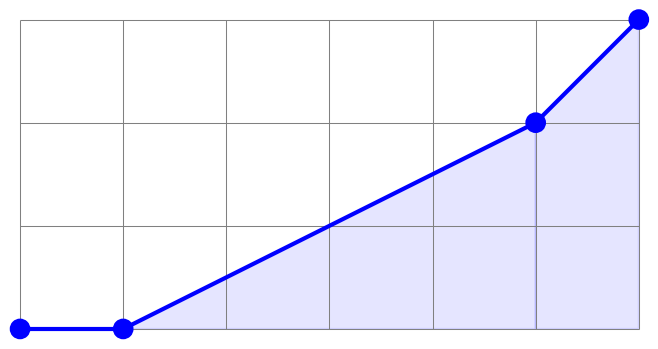}};
    \node (C-tag) [tag, above of=C, yshift=-1.3cm] {(C)};
    \node (B) [stage-NP, left of = C, xshift = -1.5cm] {\includegraphics[scale=0.36]{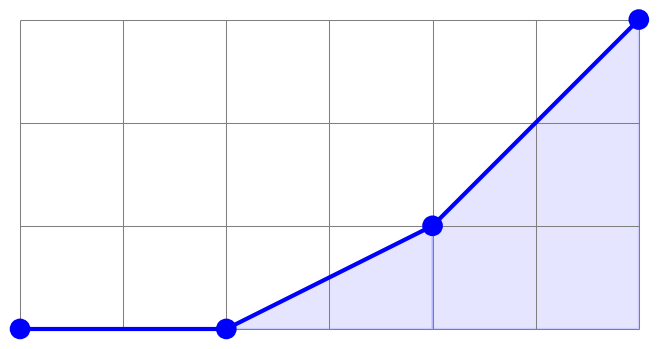}};
    \node (B-tag) [tag, above of=B, yshift=-1.3cm] {(B)};
    \node (A) [stage-NP, left of = B, xshift = -1.5cm] {\includegraphics[scale=0.36]{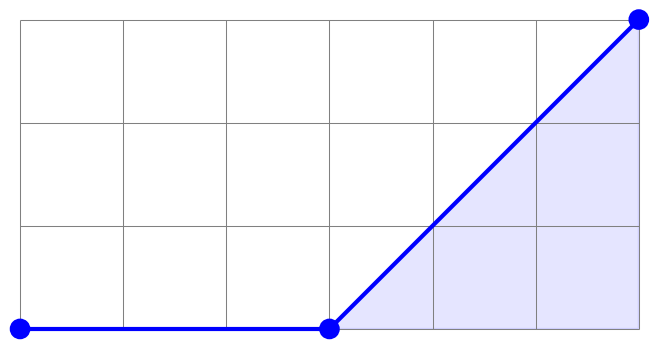}};
    \node (A-tag) [tag, above of=A, yshift=-1.3cm] {(A)};
    \node (D) [stage-NP, right of = C, xshift = 1.5cm] {\includegraphics[scale=0.36]{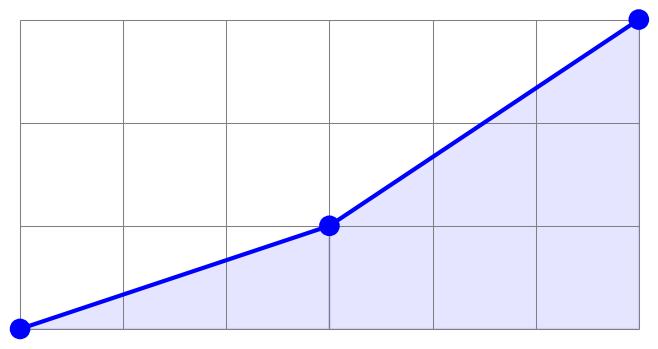}};
    \node (D-tag) [tag, above of=D, yshift=-1.3cm] {(D)};
    \node (E) [stage-NP, right of = D, xshift = 1.5cm] {\includegraphics[scale=0.36]{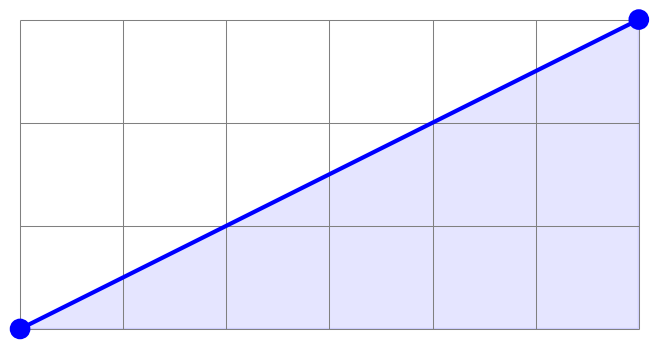}};
    \node (E-tag) [tag, above of=E, yshift=-1.3cm] {(E)};
    \node (A-delta3) [stage, below of = A, xshift = -1cm,yshift=-1cm]{$\delta_A = 3$};
    \node (A-delta1) [stage, below of = A, xshift=1cm,yshift=-1cm]{$\delta_A = 1$};
    \node (B-delta3) [stage, below of = B, xshift = -1cm,yshift=-1cm]{$\delta_A = 3$};
    \node (B-delta2) [stage, below of = B, xshift = 1cm,yshift=-1cm]{$\delta_A = 2$};
    \node (C-delta3) [stage, right of = B-delta2, xshift=0.5cm]{$\delta_A = 3$};
    \node (D-delta3) [stage, below of = D, xshift = -1cm,yshift=-1cm]{$\delta_A = 3$};
    \node (D-delta1) [stage, below of = D, xshift = 1cm,yshift=-1cm]{$\delta_A = 1$};
    \node (E-delta0) [stage, right of = D-delta1]{$\delta_A = 0$};
    \node (W6) [stop, below of = C, yshift = -6cm, xshift = -6cm]{$C_2\wr S_3$};
    \node (C2wrC3) [stop, below of = C, yshift = -6cm, xshift = -2cm]{$C_2\wr C_3$};
    \node (D6) [stop, below of = C, yshift = -6cm, xshift = 2cm]{$D_6$};
    \node (C6) [stop, below of = C, yshift = -6cm, xshift = 6cm]{$C_6$};
    \draw[arrow] (start) -- (A-tag);
    \draw[arrow] (start) -- (B-tag);
    \draw[arrow] (start) -- (C-tag);
    \draw[arrow] (start) -- (D-tag);
    \draw[arrow] (start) -- (E-tag);
    \draw[arrow] (E) -- (E-delta0);

    \draw[arrow] (A) -- (A-delta3);
    \draw[arrow] (A) -- (A-delta1);
    \draw[arrow] (B) -- (B-delta3);
    \draw[arrow] (B) -- (B-delta2);
    \draw[arrow] (C) -- (C-delta3);
    \draw[arrow] (D) -- (D-delta3);
    \draw[arrow] (D) -- (D-delta1);
    \draw[arrow] (A-delta3) .. controls + (down:2cm) .. (W6);
    \draw[arrow] (A-delta3) .. controls + (down:2cm) .. (C2wrC3);
    \draw[arrow] (A-delta3) .. controls + (down:2cm) .. (D6);
    \draw[arrow] (A-delta3) .. controls + (down:2cm) .. (C6);
    \draw[arrow] (A-delta1) .. controls + (down:1cm) .. (D6);
    \draw[arrow] (A-delta1) .. controls + (down:1cm) .. (C6);
    \draw[arrow] (B-delta3) -- (W6);
    \draw[arrow] (B-delta3) -- (C2wrC3);
    \draw[arrow] (B-delta2) -- (C6);
    \draw[arrow] (C-delta3) -- (W6);
    \draw[arrow] (C-delta3) -- (C2wrC3);
    \draw[arrow] (C-delta3) -- (D6);
    \draw[arrow] (D-delta3) -- (W6);
    \draw[arrow] (D-delta3) -- (C2wrC3);
    \draw[arrow] (D-delta1) -- (D6);
    \draw[arrow] (D-delta1) -- (C6);
    \draw[arrow] (E-delta0) -- (C6);
    \end{tikzpicture}    
    }
    \caption{Possible isomorphism classes of Galois groups of simple abelian threefolds, in terms of their Newton polygon and angle rank $\delta_A$.} \label{fig:flowchart3}
\end{figure}

\subsection{Outline}
In \Cref{sec:background} we introduce background and notation used throughout this paper. A reader familiar with abelian varieties may choose to skip directly to \Cref{sec:weighted-perm-rep}, where we introduce a key tool in our paper, the weighted permutation representation. In \Cref{sec:warmup} we warm up by classifying weighted permutation representations of elliptic curves. In \Cref{sec:surfaces} and \Cref{sec:threefolds} we classify weighted permutation representations of abelian surfaces and simple abelian threefolds respectively. We finish by listing further inverse Galois questions in \Cref{sec:questions}.

The code associated to this article is written in \texttt{Magma}~\cite{Magma} and is publicly available from the \GitHub repository \cite{OurElectronic}.

\subsection{Acknowledgements} 
We are grateful to Deewang Bhamidipati, Raymond van Bommel, Soumya Sankar, John Voight, and David Zureick-Brown for helpful conversations about this project. We also thank Everett Howe for several useful correspondences and the anonymous referee for their careful reading of the article. SF was supported by the Woolf Fisher and Cambridge Trusts through a Woolf Fisher scholarship, by C{\'e}line Maistret's Royal Society Dorothy Hodgkin Fellowship, and also by the HYPERFORM consortium, funded by France through Bpifrance. SV was supported by the National Science Foundation under grant number DMS2303211. 

\section{Background and notation}\label{sec:background}

\subsection{Honda--Tate theory}\label{sec:HT-theory} 
Let $A/\FF_q$ be an abelian variety. A celebrated theorem of Honda and Tate classifies isogeny classes of abelian varieties over finite fields. Let $g \colonequals \dim(A) > 0$ and recall that $P_A(T) \in \ZZ[T]$ is the characteristic polynomial of Frobenius. 

The roots of $P_A(T)$ have absolute value $\sqrt{q}$ in all their complex embeddings; an algebraic integer with this property is called  \cdef{$q$-Weil number}. A \cdef{$q$-Weil polynomial} is a monic integral polynomial whose roots are all $q$-Weil numbers. Fix an algebraic closure $\Qbar$ of $\QQ$ inside of $\CC$, and an embedding $\Qbar \hookrightarrow \Qbar_p$. We write $\nu$ for the $p$-adic valuation on $\Qbar_p$, normalized so that $\nu(q) = 1$. 

The statement below is the one presented in \cite[Theorem 4.2.12]{Poonen06}.

\begin{theorem}[Honda--Tate Theorem]
\label{thm:HT}
\noindent
\begin{enumerate}
\item \label{enum:HT1}
  If $A$ is a simple abelian variety, then $P_A(T) = h_A(T)^e$ for some irreducible polynomial $h_A(T) \in \mathbb{Z}[T]$ and some $e \geq 1$.
\item \label{enum:HT2}
  There is a bijection between isogeny classes of simple abelian varieties over $\mathbb{F}_q$ and conjugacy classes of $q$-Weil numbers.
\item \label{enum:HT3}
  Given a $\Gal(\Qbar/\QQ)$-conjugacy class of a $q$-Weil number, let $h_A(T)$ be the minimal polynomial of any element of this conjugacy class. Then there exists a unique integer $e_A \colonequals e \geq 1$ such that $h_A(T)^e = P_A(T)$ for some simple abelian variety $A$ over $\Fq$. Moreover, $e$ is the smallest positive integer such that:
  \begin{enumerate}
  \item $h_A(0)^e > 0$, and
  \item For each monic $\QQ_p$-irreducible factor $g(T) \in \QQ_p[T]$ of $h_A(T)$, the valuation $\nu(g(0)^e)$ is in $\ZZ$.
  \end{enumerate}
\end{enumerate}
\end{theorem}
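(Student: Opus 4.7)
The plan is to follow the classical strategy of Tate and Honda, proving the three parts in turn. For part (1), since $A$ is simple, the endomorphism algebra $D \colonequals \End(A) \otimes \QQ$ is a division algebra whose center is $F \colonequals \QQ(\pi_A)$, where $\pi_A$ denotes the Frobenius endomorphism. By Tate's isogeny theorem, the rational Tate module $V_\ell(A)$ is a free module over $D \otimes_\QQ \QQ_\ell$, so the minimal polynomial of $\pi_A$ acting on $V_\ell(A)$ coincides with the minimal polynomial $h_A(T)$ of $\pi_A$ over $\QQ$. Since $P_A(T)$ is the corresponding characteristic polynomial and shares its irreducible factors with this minimal polynomial, it must be a power of $h_A(T)$, giving $P_A(T) = h_A(T)^e$ with $e \cdot \deg h_A = 2g$.

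For part (2), injectivity of the map from isogeny classes to conjugacy classes of Weil numbers is immediate, because $P_A$ is an isogeny invariant and determines the conjugacy class of its roots. Surjectivity — that every $q$-Weil number $\pi$ arises from some simple abelian variety — is the deep part, and the plan would be to invoke Honda's construction: lift $\pi$ to a CM abelian variety $\widetilde{A}$ over a number field whose good reduction at a suitable prime above $p$ has Frobenius equal to a power of $\pi$, using the Shimura--Taniyama reciprocity law to control the Frobenius of the reduction. A finite-extension-and-descent argument, exploiting that passing from $\FF_q$ to $\FF_{q^n}$ replaces $\pi_A$ by $\pi_A^n$, then produces the desired isogeny class over $\FF_q$ itself.

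For part (3), the integer $e_A$ is the \emph{index} of $D$ as a central simple algebra over $F$, and by local--global theory for the Brauer group this is the least common denominator of the local invariants $\operatorname{inv}_v(D) \in \QQ/\ZZ$. Tate's computation shows that these invariants vanish away from the archimedean places of $F$ together with the places above $p$; the archimedean contribution exactly encodes condition (a), while at a place $v \mid p$ corresponding to a $\QQ_p$-irreducible factor $g(T)$ of $h_A$ the invariant is congruent to $[F_v : \QQ_p] \cdot \nu(g(0))$ modulo $\ZZ$, exactly encoding condition (b). Taking $e$ to be the least positive integer that clears all these denominators gives the characterization in the statement. The main obstacle is surjectivity in (2): Honda's CM-lifting argument combined with Shimura--Taniyama is genuinely deep and cannot be shortcut; once Tate's theorems on endomorphism algebras and Tate modules are granted as black boxes, everything else reduces to Brauer-group bookkeeping.
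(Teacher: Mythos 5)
The paper does not prove this statement at all: it is quoted verbatim from Poonen's notes and used as a black box, so there is no ``paper's proof'' to compare against. Your outline is the standard Tate--Honda architecture (Tate's isogeny and semisimplicity theorems for (1) and for injectivity in (2), Honda's CM lifting plus Shimura--Taniyama for surjectivity in (2), and the Brauer-group computation of $\operatorname{inv}_v(\End(A)\otimes\QQ)$ for (3)), and at that level of granularity it is the right proof. Three details need repair, though. First, $V_\ell(A)$ is \emph{not} in general free over $D\otimes_\QQ\QQ_\ell$ where $D=\End(A)\otimes\QQ$: for a supersingular elliptic curve over $\FF_{p^2}$ one has $\dim_{\QQ_\ell}V_\ell=2$ while $\dim_{\QQ_\ell}(D\otimes\QQ_\ell)=4$. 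The correct statement is that $V_\ell$ is free of rank $e$ over $F\otimes_\QQ\QQ_\ell$ with $F=\QQ(\pi_A)$ the center, which gives $P_A=h_A^e$ directly; alternatively, (1) follows from the elementary observation that $\Gal(\Qbar/\QQ)$ fixes $P_A$ and acts transitively on the roots of the irreducible $h_A$, so all multiplicities coincide. Second, in (2) what is ``immediate'' is well-definedness of the map, not injectivity; injectivity is precisely the statement that two simple abelian varieties with a common Frobenius eigenvalue are isogenous, which is Tate's isogeny theorem and should be cited as such. Third, the local invariant at $v\mid p$ is $\operatorname{inv}_v(D)\equiv\nu(g(0))\pmod{\ZZ}$ (with $\nu(q)=1$), not $[F_v:\QQ_p]\cdot\nu(g(0))$: the degree factor is already accounted for since $g(0)=\pm N_{F_v/\QQ_p}(\pi)$, and your extra factor would be inconsistent with condition (b) as stated. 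You also implicitly use that index equals exponent for division algebras over number fields (Albert--Brauer--Hasse--Noether) when identifying $e$ with the least common denominator of the local invariants; that should be said explicitly. None of these affects the overall viability of the strategy.
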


The polynomial $h_A(T)$ is the \cdef{minimal polynomial of the $q$-Frobenius endomorphism of $A$}. When $P_A(T)$ is totally complex (i.e., has no real roots), the degree of $h_A(T)$ is equal to $2d$ for some positive integer $d$. 

In general, the isogeny factorization of $A$ yields a factorization of the Frobenius polynomial. In particular, by the Honda--Tate theorem, two abelian varieties $A$ and $B$ are isogenous if and only if $P_A(T)$ is equal to $P_B(T)$. 

This observation is sufficient to understand the classification of our three isogeny invariants in the case of elliptic curves (see \Cref{sec:warmup}). We now proceed to give more background that will be useful for higher dimensional abelian varieties.

We will use the following as a running example throughout the article to consolidate our definitions and notations. This example appears in \cite[Example 6.1]{Shioda81} and \cite[Example 1.7]{Zywina22} (see also \cite[Example 4.2.9]{Gallese2024}).
\begin{example}[Shioda's example]
    \label{example:Shioda}
    Let $q = p = 19$, and let $A$ be the Jacobian of the hyperelliptic curve $C$ with affine equation $y^2 = x^9 - 1$, defined over the field $\FF_{19}$. The curve $C$ has genus $g = 4$ and therefore $A$ is an abelian fourfold. By calculating $\#C(\FF_{19^r})$ for $r=1,2,3,4$, we are able to estimate the zeta function of $C$ to enough precision to recover the Frobenius polynomial $P_A(T)$. It is given by:
    \begin{equation*}
        P_A(T) = T^8 + 8T^7 + 28T^6 + 8T^5 - 170T^4 + 152T^3 + 10108T^2 + 54872T + 130321.
    \end{equation*}
    This polynomial factors as $P_A(T) = P_E(T)P_B(T)$, where $E$ is the elliptic curve $y^2 = x^3-1$ in the isogeny class \avlink{1.19.i} with Frobenius polynomial $P_E(T) = T^2 + 8T + 19$, and $B$ is an abelian threefold in the isogeny class \avlink{3.19.a_j_acm} with Frobenius polynomial $P_B(T) = T^6 + 9T^4 - 64T^3 + 171T^2 + 6859$. By the Honda--Tate theorem $A$ is isogenous to the product $E \times B$.
\end{example}

\subsection{Newton polygons of Frobenius polynomials}
Let $A$ be a $g$-dimensional abelian variety over $\Fq$. At this moment, we do not assume that $A$ is simple. 
\begin{defn}[$q$-Newton polygon]
  \label{def:NP}
The \cdef{$q$-Newton polygon} of $A$ is the $\nu$-adic Newton polygon of $P_A(T)$. More precisely, if $P_A(T) = \sum_{j = 0}^{2g} a_{2g-j}T^j$, then the $q$-Newton polygon of $A$ is the lower convex hull of the finite set
\begin{equation*}
    \brk{(j,\nu(a_j)) : 0 \leq j \leq 2g,\mand a_j \neq 0} \subset \RR^2.
\end{equation*}
\end{defn}

\begin{example}
    Continuing with \Cref{example:Shioda}, we note that both $E$ and $B$ are ordinary varieties. Since $A \sim E \times B$, the Newton polygon of $A$ is also ordinary, and it is obtained by concatenating those of $E$ and $B$. Alternatively, one can notice that $-170$ (the middle coefficient of $P_A(T)$) is not divisible by $p=19$.
\end{example}

\subsection{Angle rank of Frobenius polynomials}
We recall the following definition from \cite{DupuyKedlayaRoeVincent22,DupuyKedlayaZureick-Brown22,APBS2023}.

\begin{defn}
  \label{defn:angle-rank}
  Consider the multiplicative subgroup $U_A \subset \Qbar^\times$ generated by the normalized Frobenius eigenvalues $u \coloneqq \alpha/\sqrt{q}$ where $\alpha$ ranges over the roots of $h_A(T)$.
  The \cdef{angle rank of $A$} is denoted $\delta_A$ and is defined to be the dimension of $U_A \otimes \QQ$. 
\end{defn}

\subsection{Galois groups of Frobenius polynomials} Given an abelian variety $A$, denote by $R_A$ the \cdef{set of roots}, without multiplicity, of the Frobenius polynomial $P_A(T)$.

\begin{defn}[Galois group]
\label{def:gal-grp}
    The \cdef{Galois group} of $A$ is the Galois group of the minimal polynomial of Frobenius $h_A(T)$. Equivalently, $G_A$ is the largest quotient of $\Gal(\Qbar/\QQ)$ over which the permutation action on $R_A$ factors. 
\end{defn}

\begin{defn}
    \label{def:K}
    In the case that $A$ is simple, we will denote by $K_A$ the center of the endomorphism algebra $\End(A)\otimes\QQ$. We have that $K_A \cong \QQ[T]/(h_A(T))$ is a number field.
\end{defn}

For ``most'' abelian varieties the polynomial $P_A(T)$ is totally complex (i.e., has no real roots), and $K_A$ is a \cdef{complex multiplication} (CM) number field. See \cite{Dodson1984} for some background on Galois groups of CM number fields, and \cite[Section 2.2]{DupuyKedlayaZureick-Brown22} for a discussion on Galois groups of $q$-Weil polynomials. 

\begin{example}
    \label{example:part-2}
    Continuing with \Cref{example:Shioda}, we have that the splitting field of $P_A(T)$ is the degree $6$ field $\QQ(\zeta_9)$, where $\zeta_9$ is a primitive $9^\text{th}$-root of unity. This already implies that the $8$ roots of $P_A(T)$ are algebraically dependent. Recalling that $A \sim E \times B$, observe that $K_E = \QQ(\sqrt{-3})$, which is contained in $K_B = \QQ(\zeta_9)$. Note that $G_A$ is a permutation group acting on the $8$ element set $R_A$. But as abstract groups, $G_A \cong \Gal(\QQ(\zeta_9)/\QQ) \cong C_6$. 

    Denote by $R_E = \brk{\eta, 19\eta^{-1}}$ and $R_B = \brk{\alpha,\beta,\gamma,19\alpha^{-1},19\beta^{-1}, 19\gamma^{-1}}$ the sets of roots of $P_E(T)$ and $P_B(T)$ respectively.    For an appropriate such labelling, we have 
    \begin{equation}
        \label{eq:mult-rel-example}
        \eta = \dfrac{\alpha\cdot\beta\cdot\gamma}{19}.
    \end{equation}
    We will see in \Cref{example:angle-rank} that the $\QQ$-vector space $U_A\otimes \QQ$ has dimension 3, i.e., $\delta_A = 3$.
\end{example}

Instead of viewing $G_A$ as a permutation subgroup of $S_n$ for $n = \# R_A$, it will be convenient to use a ``CM specific'' permutation representation.

\subsection{The group of signed permutations}\label{sec:W2d}
We now describe the abstract group which Galois groups of totally complex $q$-Weil numbers are naturally contained in. First note that when $h_A(T)$ is totally complex of degree $2d$, the roots of $h_A(T)$ come in complex conjugate pairs. Moreover, the action of the Galois group $G_A$ respects this partition. 

Let $X_{2d}$ be the set consisting of the symbols $1, \bar{1}, \dots, d, \bar{d}$. Define $W_{2d}$ to be the subgroup of $\Sym(X_{2d})$ which preserves the partition
\begin{equation*}
    X_{2d} = \brk{1,\bar{1}}\sqcup \cdots \sqcup \brk{d,\bar{d}}.
\end{equation*}
Upon a choice of labelling of roots, the Galois group of $h_A(T)$ may naturally be embedded in $W_{2d}$. We refer to the element $\iota \colonequals (1\bar{1})\ldots (d\bar{d})$ as \cdef{complex conjugation}.

\subsubsection{Subgroup labelling} \label{sec:subgroup-labelling}
We now briefly describe our naming conventions for subgroups of $W_{2d}$. A group $H$ is denoted $\texttt{G.d.t.letter.k}$ if it is isomorphic to $G$, contained in $W_{2d}$, and acts transitively on $X_{2d}$. The $W_{2d}$ conjugacy class is indexed by $\texttt{letter}$, and the groups in that conjugacy class are indexed by the tiebreaker $\texttt{k}$ (a positive integer). The use of $\texttt{nt}$ instead of $\texttt{t}$ indicates that $H$ acts intransitively on $X_{2d}$. For example, the group $\wl{C2.4.nt.c.1}$ refers to a group which is isomorphic to $C_2$, contained in $W_4$, and is intransitive. The label $\texttt{c}$ refers to the conjugacy class in $W_4$ and the index $\texttt{1}$ means that it is the first listed in its conjugacy class.

\begin{remark}
    In the code associated to this article \cite{OurElectronic} we provide functions which compute and label the transitive subgroups of $W_{2d}$ which contain complex conjugation in the file \href{https://github.com/sarangop1728/Galois-Frob-Polys/blob/main/src/W2d-subgroups.m}{\texttt{src/W2d-subgroups.m}}. Our labelling convention is well defined and essentially follows lexicographic ordering of the subgroups of the symmetric group $S_{2d} \supset W_{2d}$ (as described in \cite{HulpkeLinton:TotalOrdering}). See the file \href{https://github.com/sarangop1728/Galois-Frob-Polys/blob/main/src/subgroup-labelling.m}{\texttt{src/subgroup-labelling.m}}.
\end{remark}

\section{The weighted permutation representation}
\label{sec:weighted-perm-rep}
In this section we introduce a key tool in this paper -- the notion of a weighted permutation representation. This construction is heavily inspired by the definition of \emph{Newton hyperplane arrangement} of Dupuy, Kedlaya, and Zureick-Brown \cite{DupuyKedlayaZureick-Brown22}.

\subsection{The weighted permutation representation}
\label{sec:perm-rep} 

We now describe the weighted permutation representation associated to an abelian variety, which is the central isogeny invariant in this article. It determines the Galois group, angle rank, and Newton polygon. The main purpose of our paper is to determine which weighted permutation representations occur from low dimensional abelian varieties. The weighted permutation representation is a reinterpretation of the \emph{Newton hyperplane representation} discussed in \cite{DupuyKedlayaZureick-Brown22}.

\begin{defn}[Weighted permutation representations]
  \label{def:weighted-perm-rep}
  Given a finite group $G$, a \cdef{weighted permutation representation of $G$} is a pair $(w,\rho)$, where $w \colon X_{2d} \rightarrow \QQ_{\geq 0}$ is a map of sets and $\rho \colon G \xhookrightarrow{} W_{2d}$ is an inclusion of groups.

  We say that a pair of weighted permutation representations $(w, \rho)$ and $(w, \rho')$ of $G$ are \cdef{$w$-conjugate} if they are conjugate by an element of $\Stab(w)$, i.e., if there exists an element $\sigma \in W_{2d}$ such that $w = w \circ \sigma$ and $\rho'(g) = \sigma^{-1} \rho(g) \sigma$ for all $g \in G$. 
\end{defn}

\begin{defn}[Roots]
    \label{def:roots}
    Recall that for an abelian variety $A$ we write $R_A$ for the set of roots of $h_A(T)$ without multiplicity. We define $\rts_A$ to be the \cdef{multiset} of roots of $h_A(T)$, that is:
    \begin{enumerate}
    \item
      When $h_A(T)$ is totally complex, $\rts_A = R_A$.
    \item
      When $h_A(T)$ has a real root $\alpha$, it is counted with multiplicity two, and its duplicate is denoted by $\oalpha$.
    \end{enumerate}
    We define $d_A = \#\rts_A / 2$.
\end{defn}

\begin{defn}[Indexing and weighting]
    \label{def:indexing}
    An \cdef{indexing of roots} of $h_A(T)$ is a bijection $\indx\colon X_{2d} \to \rts_A$ which satisfies the following conditions:
    \begin{enumerate}
    \item
      $\indx$ respects complex conjugation, i.e., $\indx(\overline{k}) = \overline{\indx(k)}$ for each $1 \leq k \leq d$, and
    \item
     the indices climb the Newton polygon, i.e.,
      \[
        \nu(\indx(i)) \leq \nu(\indx(j)) \leq \nu(\indx(\bar{j})) \leq \nu(\indx(\bar{j}))
      \]
      for each pair of indices $1 \leq i \leq j \leq d$.
    \end{enumerate}
\end{defn}

For simplicity, given an indexing $\indx$ we write $\alpha_i \coloneqq \indx(i)$ and $\oalpha_i \coloneqq \indx(\bar{i})$ in what follows. Any indexing of the roots naturally gives a \cdef{weighting} $w_A \colon X_{2d} \rightarrow \QQ_{\geq 0}$ given by $k \mapsto \nu(\alpha_k)$ and $\bar{k} \mapsto \nu(\oalpha_k)$ for $k \in \brk{1,\dots,d}$. We omit the choice of indexing from the notation, since every choice of indexing yields the same weighting $w_A$.  Note that the weighting $w_A$ associated to an abelian variety $A$ is uniquely determined by the $q$-Newton polygon of $A$.

\begin{defn}[Weighted permutation representation, totally complex case]
    \label{def:perm-rep}
    Suppose $h_A(T)$ is totally complex. Given an indexing $\mathcal{I}$, we obtain a representation 
    \begin{equation*}
        \rho_{\mathcal{I}} \colon G_A \xhookrightarrow{} \Sym(X_{2d}) \cong S_{2d}    
    \end{equation*}
    whose image lies in $W_{2d}$. The pair $(w_{A}, \rho_{\mathcal{I}})$ is the \cdef{weighted permutation representation} associated to $A$ with respect to the indexing $\mathcal{I}$.
\end{defn}

The definition above \emph{canonically} defines the weighted permutation representation associated to $A$ up to $w_A$-conjugacy when $P_A(T)$ is totally complex. The following definition gives a notion of weighted permutation representation when $P_A(T)$ is totally real; although the definition does not appear canonical, we will show later that it is in fact unique up to isomorphism.

The simple isogeny classes of abelian varieties with real eigenvalues are highly constrained. 
\begin{lemma}[{\cite[p.~528]{Waterhouse1969}}]
    \label{lem:real-eigenvalues}
    Let $A$ be a simple abelian variety whose Frobenius eigenvalues are real. Then:
    \begin{enumerate}[label=(\arabic*)]
    \item \label{enum:real-ECs}
      If $q$ is a square, $A$ is a supersingular elliptic curve, and $h_A(T) = (T \pm \sqrt{q})$ for some choice of sign, or
    \item \label{enum:complex-ECs}
      If $q$ is not a square, $A$ is a supersingular abelian surface, and $h_A(T) = (T^2-q)$.
    \end{enumerate}
    Moreover, in case~\ref{enum:real-ECs} we have $\rts_A = \brk{\sqrt{q}, \overline{\sqrt{q}}}$ or $\brk{-\sqrt{q}, -\overline{\sqrt{q}}}$, and in case~\ref{enum:complex-ECs} we have $\rts_A = \brk{\sqrt{q}, \overline{\sqrt{q}}, -\sqrt{q}, -\overline{\sqrt{q}}}$.
\end{lemma}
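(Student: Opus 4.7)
The plan is to apply the Honda--Tate theorem (\Cref{thm:HT}) directly. Any real $q$-Weil number $\alpha$ has $|\alpha| = \sqrt{q}$ in every complex embedding and hence equals $\pm\sqrt{q}$. Since $A$ is simple, $h_A(T)$ is the minimal polynomial over $\QQ$ of some $\pm\sqrt q$. Thus if $q$ is a square then $\sqrt q \in \QQ$ and $h_A(T) = T \mp \sqrt q$; otherwise write $q = p^n$ with $n$ odd, observe $\sqrt q \notin \QQ$, and conclude $h_A(T) = T^2 - q$.

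Next I would identify the multiplicity $e_A$ predicted by part~\ref{enum:HT3} of \Cref{thm:HT}. In the degree-$1$ case $h_A$ is trivially $\Qp$-irreducible. In the degree-$2$ case, the substitution $T \mapsto p^{(n-1)/2}S$ turns $T^2 - p^n$ into $p^{n-1}(S^2 - p)$, and $S^2 - p$ is Eisenstein over $\Qp$, so $h_A(T)$ remains $\Qp$-irreducible. Hence condition~(b) of~\ref{enum:HT3} reads $e_A \cdot \nu(h_A(0)) \in \ZZ$; since $\nu(\pm\sqrt q) = 1/2$ this forces $e_A$ to be even in the degree-$1$ case, while $\nu(q) = 1$ makes it automatic in the degree-$2$ case. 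Condition~(a), $h_A(0)^{e_A} > 0$, independently forces $e_A$ to be even except for the subcase $h_A(T) = T + \sqrt q$, where (b) already does. Therefore $e_A = 2$ uniformly, and $\deg P_A = e_A \cdot \deg h_A$ gives $\dim A = 1$ in case~\ref{enum:real-ECs} and $\dim A = 2$ in case~\ref{enum:complex-ECs}.

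The Newton polygon of $P_A$ is then a single segment of slope $\nu(\pm\sqrt q) = 1/2$, so $A$ is supersingular. Finally, the description of $\rts_A$ follows directly from \Cref{def:roots}: in case~\ref{enum:real-ECs} the unique real root $\pm\sqrt q$ is counted with multiplicity two, producing $\brk{\sqrt q, \overline{\sqrt q}}$ or $\brk{-\sqrt q, -\overline{\sqrt q}}$; in case~\ref{enum:complex-ECs} both real roots $\sqrt q$ and $-\sqrt q$ are each counted twice. I do not anticipate a serious obstacle---the argument is essentially bookkeeping around the Honda--Tate dictionary---and the only step requiring a little care is the verification of $\Qp$-irreducibility of $T^2 - q$ when $q$ is a non-square power of $p$.
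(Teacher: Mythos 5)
Your argument is correct. The paper does not prove this lemma at all---it simply cites Waterhouse \cite[pp.~528]{Waterhouse1969} for the classification---so your self-contained derivation via the Honda--Tate theorem is a genuine (and welcome) addition rather than a restatement. All the key steps check out: a real $q$-Weil number must be $\pm\sqrt{q}$; the case split on whether $q$ is a square correctly identifies $h_A(T)$ as $T\mp\sqrt{q}$ or $T^2-q$; your verification that $T^2-q$ stays irreducible over $\QQ_p$ when $q=p^n$ with $n$ odd (via the Eisenstein reduction $S^2-p$, or simply because a root would have half-integral valuation) is the one step needing care and you handle it; and the computation of $e_A=2$ from conditions (a) and (b) of \Cref{thm:HT}\ref{enum:HT3} is right in all subcases, giving $\dim A = 1$ and $\dim A = 2$ respectively. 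The supersingularity claim follows as you say from the constant slope $\tfrac12$, and the description of $\rts_A$ is immediate from \Cref{def:roots}. The only cosmetic remark is that your phrase ``forces $e_A$ to be even in the degree-$1$ case'' under condition (b) and the separate discussion of condition (a) could be compressed, since (b) alone forces $e_A$ even in every subcase; but this does not affect correctness.
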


\begin{defn}[Weighted permutation representation, totally real case]
    \label{def:perm-rep-real}
    Suppose that $A$ has only real Frobenius eigenvalues.
    \begin{enumerate}
    \item[(1a)] If $q$ is a square and $A$ has one eigenvalue, then $G_A$ is trivial. Define $\rho_{\mathcal{I}} \colon G_A \to W_2$ to be the trivial map for all indexings.
    \item[(1b)] If $q$ is a square and $A$ has two eigenvalues, then $G_A$ is trivial. Define $\rho_{\mathcal{I}} \colon G_A \to W_4$ to be the trivial map for all indexings.
    \item[(2)] If $q$ is not a square, then $G_A \cong C_2$. Define $\rho_{\mathcal{I}} \colon G_A \to W_4$ to be the homomorphism sending the nontrivial element on $G_A$ to $(12)(\bar{1}\bar{2})$ for all indexings.
    \end{enumerate}
    The pair $(w_A, \rho_\mathcal{I})$ is the \cdef{weighted permutation representation} associated to $A$ with respect to the indexing $\mathcal{I}$. 
\end{defn}

We now define the weighted permutation representation associated to an abelian variety $A$ for which $h_A(T)$ need not be totally real or totally complex; it is essentially the direct sum of its totally real and totally complex parts. In this case $A$ is isogenous to a product $B \times C$ where $h_B(T)$ is totally real and $h_C(T)$ is totally complex. Let $(w_B,\rho_B)$ and $(w_C,\rho_C)$ be the weighted permutation representations corresponding to the factors $B$ and $C$ respectively. Let $(w_B \oplus w_C, \rho_B \oplus \rho_C)$ be the direct sum of these weighted permutation representations; conjugate by $W_{2d}$ so that the weight function is nondecreasing, i.e., so that
\[
    w(i) \leq w(j) \leq w(\bar{j}) \leq w(\bar{i})
\]
for $i \leq j$. We define the weighted permutation representation associated to $A$ to be the resulting weighted permutation representation. \Cref{lemma:well-def-conj-class} follows by construction.

\begin{lemma}
\label{lemma:well-def-conj-class}
    The weighted permutation representation associated to an abelian variety $A$ is well defined up to $w_A$-conjugacy (irrespective of indexing). In particular, its image in $W_{2d}$ is a subgroup $\cclass{A}$ which is well-defined up to $w_A$-conjugacy.
\end{lemma}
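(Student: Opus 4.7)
The plan is to show that any two indexings $\mathcal{I}, \mathcal{I}'$ of $\rts_A$ differ by a permutation $\sigma \in W_{2d}$ that stabilizes the weight function $w_A$; once this is established, the resulting representations are related by conjugation by such a $\sigma$, which is by definition a $w_A$-conjugacy, and then the image subgroup $\cclass{A}$ is automatically well-defined up to $w_A$-conjugacy as well.

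First I would handle the totally complex case. Set $\sigma \colonequals \mathcal{I}^{-1} \circ \mathcal{I}' \in \Sym(X_{2d})$. Condition (1) of \Cref{def:indexing} (respecting complex conjugation), applied to both indexings, forces $\sigma(\bar{k}) = \overline{\sigma(k)}$, which is exactly the statement that $\sigma \in W_{2d}$. Condition (2) (climbing the Newton polygon) forces each of $\mathcal{I}(k)$ and $\mathcal{I}'(k)$ to be a root of the $k$-th smallest valuation in $\rts_A$, so $\nu(\mathcal{I}(k)) = \nu(\mathcal{I}'(k))$ for every $k$; equivalently, $w_A \circ \sigma = w_A$, i.e., $\sigma \in \Stab(w_A)$. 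Because the Galois action on $\rts_A$ is intrinsic to $A$ (requiring no choice of indexing), transporting it through $\mathcal{I}'$ versus $\mathcal{I}$ yields representations satisfying $\rho_{\mathcal{I}'}(g) = \sigma^{-1}\, \rho_\mathcal{I}(g)\, \sigma$ for every $g \in G_A$, which is precisely $w_A$-conjugacy in the sense of \Cref{def:weighted-perm-rep}.

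For the totally real case, \Cref{def:perm-rep-real} specifies $\rho_\mathcal{I}$ by an explicit formula that does not depend on $\mathcal{I}$, so any two indexings yield identical homomorphisms and the statement is immediate. For the mixed case, the decomposition $A \sim B \times C$ into a totally real factor $B$ and a totally complex factor $C$ is canonical, because it is dictated by which Frobenius eigenvalues are real; by the previous two cases, the summands $(w_B, \rho_B)$ and $(w_C, \rho_C)$ are each well-defined up to $w_B$- and $w_C$-conjugacy, so the direct sum $(w_B \oplus w_C,\ \rho_B \oplus \rho_C)$ is well-defined up to conjugation in $\Stab(w_B) \times \Stab(w_C) \subset \Stab(w_B \oplus w_C)$. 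The final step, conjugating by an element of $W_{2d}$ to make the weight function nondecreasing, is by construction a conjugation by an element of $\Stab(w_A)$ applied to the reordered representation; the set of allowable reorderings is again a $\Stab(w_A)$-torsor, so the resulting class is independent of which reordering was chosen.

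The only genuinely substantive step is the totally complex case, and even there the heart of the argument is the observation that the two conditions in \Cref{def:indexing} are exactly what is required to force the transition map between any two indexings to lie in $\Stab(w_A) \subset W_{2d}$. This is why the authors can assert that the lemma ``follows by construction''; the real content is conceptual, in the design of the indexing conditions so that this compatibility becomes tautological.
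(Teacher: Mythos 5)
Your argument is correct and is exactly the unpacking of what the paper leaves implicit when it says the lemma ``follows by construction'': the transition map $\sigma=\mathcal{I}^{-1}\circ\mathcal{I}'$ between two indexings lies in $W_{2d}$ by condition (1) of \Cref{def:indexing} and in $\Stab(w_A)$ by condition (2), so the transported representations are $w_A$-conjugate, with the totally real and mixed cases handled as you describe. No gaps; this matches the intended proof.
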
 

\subsection{The angle rank}\label{sec:newton-rep}

It turns out that the angle rank can be computed from the weighted permutation representation of an abelian variety in the following way. 

\begin{defn}
 Given a weighted permutation representation $(w,\rho \colon G \xhookrightarrow{} W_{2d})$, define the \cdef{angle rank of $(w,\rho)$} to be $\rank(M) - 1$ where $M$ is the $(d \times |G|)$-matrix whose $i^\text{th}$-column has entries $w(\sigma(i))$ where $\sigma$ ranges over elements of $G$.
\end{defn}

The angle rank of $(w,\rho)$ is equal to the rank of the $(d \times |G|)$-matrix whose $i^\text{th}$-column has entries $w(\sigma(i)) - w(\sigma(\bar{i}))$, which is referred to as the Newton hyperplane matrix \cite[Remark 3.4]{DupuyKedlayaZureick-Brown22}. We show in \Cref{prop:newton-matrix-rank} that the angle rank of an abelian variety is equal to the angle rank of its weighted permutation representation.

\begin{example}\label{example:angle-rank}
Continuing with \Cref{example:part-2}, fix a prime $\mfp$ of the splitting field $K = \QQ(\zeta_9)$ above $p = 19$, and let $\nu$ be the extension of the $19$-adic valuation of $\QQ$ extended to $K$. We fix the following indexing of the roots $R_A$ (see \Cref{def:indexing}):
\begin{equation}
    \alpha_1 := \alpha, \quad \alpha_2 := \beta, \quad \alpha_3 := \eta, \quad \alpha_4 := \gamma,
\end{equation}
if we ensure that $\nu(\alpha) = \nu(\beta) = \nu(\gamma) = \nu(\eta) = 0$. With this indexing, the weighted permutation representation $\rho\colon \Gal(\QQ(\zeta_9)/\QQ) \to W_8$ has image $H = \langle h \rangle$, where $h = (1\bar{2}\bar{4}\bar{1}24)(3\bar{3})$. With respect to this indexing, the multiplicative relation in \Cref{eq:mult-rel-example} becomes
\begin{equation}
    \alpha_3 = \dfrac{\alpha_1\alpha_2\oalpha_4}{19} =\dfrac{\alpha_1\alpha_2}{\alpha_4}.
\end{equation}
One can find this multiplicative relation by considering the Newton hyperplane matrix of this weighted permutation representation, and computing its kernel. By direct calculation we see that the rank of the Newton hyperplane matrix (which is equal to $\delta_A$) is three.
\end{example}

\subsection{The divisor map} \label{sec:divisor-map}
Many of the proofs in this paper make use of the same key idea, which we elucidate here. Given a simple totally complex $g$-dimensional abelian variety $A$ with Galois group $G_A$, we may construct the following. Let $L$ be the Galois closure of the field $K$, i.e., the field generated by the roots of $P_A(T)$. Let $\cP_L$ be the set of primes in $L$ above $p$. Let $\vecsp{\QQ}{R_A}$ denote the $\QQ$-vector space of dimension $2d$ whose basis consists of the formal symbols $[\alpha]$ where $\alpha$ ranges over the set of Frobenius eigenvalues.

Let $\Div_{\QQ}(\OO_L)$ be the free $\QQ$-module supported on the prime ideals of $\OO_L$ and let $\div_A \colon \OO_L \to \Div_{\QQ}(\OO_L)$ be the map sending each element $x \in \OO_L$ to $ \sum_{\mfp} a_{\mfp} \mfp$ where $(x) = \prod_\mfp \mfp^{a_{\mfp}}$ is the prime factorization of the ideal generated by $x$. By abuse of notation we write
\[
    \div_A \colon \vecsp{\QQ}{R_A} \rightarrow \vecsp{\QQ}{\cP_L}
\]
for the $\QQ$-linear map given by linearly extending $\div_A$ on the roots $\alpha \in R_A$. 

Note that $\vecsp{\QQ}{R_A}$ naturally inherits the structure of a $G_A$-module from the action of $G_A$ on $R_A$, and similarly $\vecsp{\QQ}{\cP_L}$ has an action of $G_A$ inherited from the action of $G_A$ on $\cP_L$. The map $\div_A$ is $G_A$-equivariant. Note that $\div_A$ determines the weighted permutation representation, and thus also determines the angle rank, Newton polygon, and Galois group.

It is natural to ask which $G_A$-module homomorphisms are permissible as the divisor map of a simple abelian variety. We list some necessary conditions which follow immediately from the construction.

\begin{proposition}
\label{prop:div-properties}
    Let $A$ be a simple abelian variety with no real Frobenius eigenvalues. Then:     
    \begin{enumerate}[label=(\arabic*)]
    \item \label{enum:div-property-1} $\div_A([\alpha] + [\overline{\alpha}]) = \div_A(q)$ for all $\alpha \in R_A$;
    \item \label{enum:div-property-2} the $G_A$-action on $R_A$ is transitive;
    \item \label{enum:div-property-3} the $G_A$-action on $\cP_L$ is transitive; and
    \item \label{enum:div-property-4} if the Newton polygon has a segment of horizontal length $m$ which contains exactly two lattice points, then $\# \cP_L \mid \frac{1}{m}\# G_A$.
    \end{enumerate}
\end{proposition}
\begin{proof}
Claims \ref{enum:div-property-1}--\ref{enum:div-property-3} follows from the fact that $\alpha \overline{\alpha} = q$ and the assumption that $A$ is simple.

To prove claim \ref{enum:div-property-4}, observe that if the Newton polygon has a slope of length $m$, then by the Honda--Tate theorem, the field $K$ contains a prime $\mfq$ above $p$ of degree divisible by $m$ (i.e., the product of the ramification index and the inertia degree is divisible by $m$). Because $L$ is the Galois closure of $K$, the degree of any prime in $\cO_L$ must be divisible by $m$, so $\# \cP_L \mid \frac{1}{m}\# G_A$.
\end{proof}

We now quickly use the divisor map to show the following lemma.

\begin{lemma}
\label{prop:newton-matrix-rank}
The angle rank $\delta_A$ of an abelian variety is equal to the angle rank of its weighted permutation representation. Moreover, $\delta_A = \rank(\div_A) - 1$.
\end{lemma}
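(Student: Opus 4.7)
The plan is to factor $\div_A$ through the natural map
\[
\psi\colon \vecsp{\QQ}{R_A}\longrightarrow L^\times\otimes_\ZZ\QQ, \qquad [\alpha]\mapsto \alpha\otimes 1,
\]
followed by the divisor map $\div_\QQ$ restricted to primes above $p$ (valid because every root $\alpha$ is a unit at primes not above $p$, so that $\div_A = \div_\QQ\circ\psi$). The ``moreover'' statement is meaningful only when $\div_A$ is defined, namely when $A$ is simple with no real Frobenius eigenvalues, and I would work in that setting throughout. The first assertion in the general case then follows by applying the same analysis to each simple totally complex factor of the isogeny decomposition and observing that real Frobenius eigenvalues $\pm\sqrt q$ contribute only torsion to $U_A$ and constant weights $1/2$ to $M$.

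I would first compute $\rank(\psi)=\delta_A+1$. The image of $\psi$ is $W_{\mathrm{mult}}\otimes\QQ$, where $W_{\mathrm{mult}}\subset L^\times$ is the multiplicative group generated by the roots of $h_A$. Since $q=\alpha\overline\alpha\in W_{\mathrm{mult}}$, we have $\sqrt q\in W_{\mathrm{mult}}\otimes\QQ$, and each root factors as $\alpha=u_\alpha\sqrt q$; hence $W_{\mathrm{mult}}\otimes\QQ$ is spanned by $U_A$ and $\sqrt q$. At any (necessarily complex) archimedean place $v$ of $L$ one has $|u_\alpha|_v=1$ while $|\sqrt q|_v>1$, so $\sqrt q$ is independent of $U_A\otimes\QQ$, giving $\rank(\psi)=\dim_\QQ(W_{\mathrm{mult}}\otimes\QQ)=\delta_A+1$.

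Next I would show $\ker(\div_\QQ)\cap\mathrm{im}(\psi)=0$ in $L^\times\otimes\QQ$. If $x=\prod_\alpha \alpha^{n_\alpha}$ has trivial divisor, then a power $x^N$ is a genuine unit $u\in\OO_L^\times$ with $|u|_v=\sqrt q^{\,N\sum n_\alpha}$ at every complex place, so the product formula $\prod_{v\mid\infty}|u|_v^2=1$ forces $\sum n_\alpha=0$; the resulting $u$ has trivial archimedean absolute values everywhere and is therefore a root of unity, so $x=0$ in $L^\times\otimes\QQ$. Combining this with the previous paragraph yields $\rank(\div_A)=\rank(\psi)=\delta_A+1$, establishing the ``moreover'' assertion.

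Finally I would identify $\rank(\div_A)$ with $\rank(M)$. Fix a prime $\mfp_0$ of $L$ above $p$ with decomposition group $H\subset G_A$, so that $\cP_L\cong G_A/H$; the matrix of $\div_A$ has $(\sigma H,i)$-entry $\nu_{\sigma\mfp_0}(\alpha_i)=w(\sigma^{-1}(i))$, which is well-defined on cosets because $w$ is $H$-invariant ($H$ stabilizes $\mfp_0$, hence its valuation). The same $H$-invariance implies that the rows of $M$ indexed by a common right coset $H\sigma$ coincide, so collapsing them yields a matrix identified entry-for-entry with the matrix of $\div_A$ under the bijection $H\sigma\leftrightarrow \sigma^{-1}H$. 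Hence $\rank(M)=\rank(\div_A)=\delta_A+1$, and the angle rank of $(w,\rho)$ equals $\rank(M)-1=\delta_A$. The main technical point will be the careful left/right coset bookkeeping in this last step; handling the reduction to the simple totally complex case in the general statement is straightforward but requires verifying that the ``$-1$'' shift correctly absorbs the constant contribution from any totally real factor.
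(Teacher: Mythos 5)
Your proposal is correct and follows essentially the same route as the paper's proof: the heart of both arguments is that an element $\prod_i \alpha_i^{k_i}$ whose divisor is supported on $(p)$ equals $q^{\sum k_i/2}$ times an algebraic unit of archimedean absolute value $1$, hence a root of unity (Kronecker), which identifies $\ker(\div_A)$ modulo the line spanned by $\div_A(q)$ with the multiplicative relations among normalized eigenvalues; the final identification of $\rank(\div_A)$ with $\rank(M)$ is the paper's ``remove duplicate rows and columns'' step, which you simply carry out with explicit decomposition-group bookkeeping. Your factorization through $L^\times\otimes_{\ZZ}\QQ$ and the explicit treatment of the totally real contribution are mild repackagings of, and slightly more careful than, the published argument.
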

\begin{proof}
Consider the subspace $V_p = \Span(\div_A(p)) \subset \vecsp{\QQ}{\cP_L}$. We first claim that the angle rank of an abelian variety is precisely the rank of the linear map $\vecsp{\QQ}{R_A} \rightarrow \vecsp{\QQ}{\cP_L}/ V_p$ induced by $\div_A$. Clearly a multiplicative relation between the Frobenius eigenvalues gives rise to an element in the kernel of this map. Conversely, given an element $\sum_{i = 1}^{2g} k_i\alpha_i$ in the kernel, we have
\[
    \bigg (\prod_i\alpha_i^{k_i}\bigg ) =  (q)^{\sum k_i /2},
\]
so there exists $u \in \OO_L^{\times}$ such that
\[
    \prod_iq^{-k_i/2}\alpha_i^{k_i} = u.
\]
Because $u$ has length $1$ in all complex embeddings, $u$ is a root of unity. Now observe that the rank of $\vecsp{\QQ}{R_A} \rightarrow \vecsp{\QQ}{\cP_L} / V_p$ is precisely one less than the rank of $\div_A \colon \vecsp{\QQ}{R_A} \rightarrow \vecsp{\QQ}{\cP_L}$. 

Take the matrix representing the latter and make the following operations. First, for each complex conjugate pair of roots in $R_A$, remove the row corresponding to precisely one of the roots. Next, for each element of $\cP_L$, replace the column corresponding to it with $\# G_A /\#\cP_L$ copies of itself.

Observe that the operations above do not change the rank. Now, after possibly permuting the rows and columns, the matrix is precisely the matrix given in the definition of the angle rank of a weighted permutation representation.
\end{proof}

Observe that for any positive integer $n$, every map $\vecsp{\QQ}{X_{2d}} \rightarrow \QQ^{n}$ satisfying the properties above gives rise to a weighted permutation representation. To prove that a certain weighted representation cannot occur, it suffices to show that there does not exist a lift $\QQ^{2d} \rightarrow \QQ^{n}$ satisfying the conditions above. 

\vspace{-4mm}
\section{Warm-up: elliptic curves}\label{sec:elliptic-curves}
\label{sec:warmup}
We begin with the case when $A$ is an elliptic curve. In accordance with the labelling convention described in \Cref{sec:W2d} we write $\texttt{W2.2.t.a.1}$ and $\texttt{C1.2.t.a.1}$ for the subgroups of order $2$ and $1$ of $W_2$. The following \namecref{lemma:main-thm-ecs} is immediate.

\begin{proposition}
  \label{lemma:main-thm-ecs}
  Let $A$ be an elliptic curve. Then the image, $\cclass{A}$, of the weighted permutation representation associated to $A$ is equal to $W_2$ except when $q$ is a square and $P_A(T) = (T \pm \sqrt{q})^2$, in which case $A$ is supersingular and $\cclass{A} = \{ \operatorname{id} \}$. More precisely, the possible combinations of Galois group, Newton polygon, and angle rank that occur are displayed in Tables~\ref{tab:EC-A}~and~\ref{tab:EC-B}.
\end{proposition}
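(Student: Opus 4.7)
The plan is to dispatch the statement by a short case split on whether the Frobenius polynomial $P_A(T) = T^2 - aT + q$ has real or complex roots, invoking \Cref{lem:real-eigenvalues} to handle the real case.

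First I would note that an elliptic curve has $P_A(T)$ of degree two with discriminant $a^2 - 4q$. If $a^2 < 4q$ then the two Frobenius eigenvalues form a complex conjugate pair, $h_A(T) = P_A(T)$ is irreducible, and the minimal polynomial is totally complex with $d = d_A = 1$. The Galois group $G_A \cong C_2$ embeds into $W_2 = \{\operatorname{id}, (1\bar{1})\}$ via \Cref{def:perm-rep}; since the nontrivial element of $G_A$ acts on $R_A = \{\alpha, \overline{\alpha}\}$ by complex conjugation, and since complex conjugation in $W_2$ is precisely $(1\bar{1})$, the representation is surjective and $\cclass{A} = W_2$. The remaining case is $a^2 = 4q$, which forces $q$ to be a square; by \Cref{lem:real-eigenvalues}\ref{enum:real-ECs} we have $h_A(T) = T \pm \sqrt{q}$, so $G_A$ is trivial and \Cref{def:perm-rep-real}(1a) gives $\cclass{A} = \{\operatorname{id}\}$.

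To fill in the tables it remains to record the Newton polygon and angle rank in each case. The Newton polygon is determined by $\nu(a)$: if $\nu(a) = 0$ then $A$ is ordinary with slopes $\{0,1\}$, if $\nu(a) > 0$ and $a \neq 0$ or $a=0$ with $P_A(T)=T^2+q$ we get the supersingular slopes $\{1/2, 1/2\}$, and if $P_A(T) = (T \pm \sqrt{q})^2$ we get the purely slope-$1/2$ polygon with a single lattice point segment. For the angle rank I would cite the classical fact that a Weil $q$-number $\alpha$ satisfies $\alpha/\sqrt{q}$ a root of unity if and only if $A$ is supersingular; hence $\delta_A = 1$ in the ordinary case and $\delta_A = 0$ in the supersingular cases. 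Combining this with the identification of $\cclass{A}$ above populates the rows of Tables~\ref{tab:EC-A} and \ref{tab:EC-B}.

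There is no substantive obstacle here: the content is essentially the dichotomy between ordinary and supersingular elliptic curves, together with the observation that complex conjugation generates the Galois group in the complex case. The only thing to be careful about is the degenerate real case, where one must use \Cref{lem:real-eigenvalues} together with \Cref{def:perm-rep-real} rather than \Cref{def:perm-rep}, and to check that each listed row is actually realized by a Honda--Tate isogeny class, which follows by exhibiting an explicit $P_A(T)$ in each row (e.g.\ any $a$ coprime to $p$ for the ordinary row, $a = 0$ for the supersingular case with $p \nmid q$ slope $1/2$, and the appropriate supersingular $j$-invariants when $q$ is a square).
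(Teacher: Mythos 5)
Your proof is correct and is exactly the argument the paper has in mind: the paper simply declares the proposition ``immediate,'' and your case split on the discriminant (complex conjugate eigenvalues giving $\cclass{A}=W_2$ versus the degenerate $P_A(T)=(T\pm\sqrt{q})^2$ case handled by \Cref{lem:real-eigenvalues} and \Cref{def:perm-rep-real}), together with the standard ordinary/supersingular dichotomy for the Newton polygon and angle rank, supplies the details. The only implicit step is ruling out $a^2>4q$, which follows from the roots being $q$-Weil numbers, so there is no gap.
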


\begin{table}[H]
  \setlength{\arrayrulewidth}{0.3mm} 
  \setlength{\tabcolsep}{5pt}
  \renewcommand{\arraystretch}{1.2}
  \centering
  \begin{tabular}{|c|c|c|c|}
    \hline
    \rowcolor{headercolor}
    $w_A$-conjugacy class  & Angle rank & Occurs  & Example(s)      \\
    \hline
    $\texttt{W2.2.t.a.1}$  & $1$        & Yes                      & \avlink{1.2.ab} \\ \hline
    $\texttt{C1.2.nt.a.1}$ & $0$        & No                         &                 \\ \hline
  \end{tabular}
    \caption{The $w_A$-conjugacy classes of subgroups $G \subset W_2$ which occur as the image of the weighted permutation representation associated to an ordinary elliptic curve.}
  \label{tab:EC-A}
\end{table}

\begin{table}[H]
  \setlength{\arrayrulewidth}{0.3mm} 
  \setlength{\tabcolsep}{5pt}
  \renewcommand{\arraystretch}{1.2}
  \centering
  \begin{tabular}{|c|c|c|c|}
    \hline
    \rowcolor{headercolor}
    $w_A$-conjugacy class  & Angle rank & Occurs & Example(s)      \\
    \hline
    $\texttt{W2.2.t.a.1}$  & $0$        & Yes                      & \avlink{1.2.ac} \\ \hline
    $\texttt{C1.2.nt.a.1}$ & $0$        & Yes                      & \avlink{1.4.ae} \\ \hline
  \end{tabular}
    \caption{The $w_A$-conjugacy classes of subgroups $G \subset W_2$ which occur as the image of the weighted permutation representation associated to a supersingular elliptic curve.}
  \label{tab:EC-B}
\end{table}

\FloatBarrier

\section{Abelian surfaces}\label{sec:surfaces}
In this section we prove the following theorem.

\begin{theorem}
  \label{thm:main-thm-surfaces}  
  Let $A$ be an abelian surface.
  \begin{itemize}
      \item \thmemph{Permutation representations in $W_4$:} If $A$ has permutation representation contained in $W_4$, then the possible images $\cclass{A}$ of the weighted permutation representation associated to $A$ are given in Tables~\ref{tab:dim2-A}~and~\ref{tab:dim2-A-ns} when $A$ is ordinary, in Tables~\ref{tab:dim2-B}~and~\ref{tab:dim2-B-ns} when $A$ is almost ordinary, and in Tables~\ref{tab:dim2-C}~and~\ref{tab:dim2-C-ns} when $A$ is supersingular. The weighted permutation representation determines whether $A$ is geometrically simple; this information can be seen in the tables as well.
      \item \thmemph{Permutation representations in $W_2$ when $A$ is simple:}
      If $A$ has a permutation representation contained in $W_2$ and is simple, then $A$ is supersingular and hence the angle rank is $0$. Both trivial Galois group and Galois group $C_2$ occur. $A$ is not geometrically simple.
      \item  \thmemph{Permutation representations in $W_2$ when $A$ is not simple:} Now suppose $A$ has a permutation representation contained in $W_2$ and is not simple. Then $A$ is isogenous (over $\Fq$) to $E^2$ for some elliptic curve $E$ then the weighted permutation representation associated to $A$ takes values in $W_2$, and its image is equal to that associated to $E$ (and classified in \Cref{lemma:main-thm-ecs}).
  \end{itemize}
\end{theorem}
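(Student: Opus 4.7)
The plan is to handle the three cases in order, working from the simpler cases to the main $W_4$ enumeration.

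First I would address the $W_2$ cases. For the non-simple case with permutation representation contained in $W_2$, write $A \sim_{\Fq} E_1 \times E_2$. The hypothesis $d_A = 1$ means $|\rts_A| = 2$; however, if $E_1 \not\sim_{\Fq} E_2$ then $h_{E_1}$ and $h_{E_2}$ are coprime and each factor contributes two elements to $\rts_A$ (after the doubling convention of \Cref{def:roots} in the real case), forcing $d_A = 2$. Hence $E_1 \sim_{\Fq} E_2 = E$, so $h_A = h_E$ and $\rts_A = \rts_E$; by construction the weighted permutation representation of $A$ coincides with that of $E$, and the claim reduces to \Cref{lemma:main-thm-ecs}. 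For the simple case with permutation representation contained in $W_2$, the same constraint $d_A = 1$ together with \Cref{lem:real-eigenvalues} excludes totally real $h_A$ (which would force $A$ to be an elliptic curve), so $h_A$ must be a totally complex irreducible quadratic. Then \Cref{thm:HT}(3) gives $P_A = h_A^2$ with both Newton slopes equal to $1/2$, so $A$ is supersingular with $\delta_A = 0$ and $G_A$ is forced to be $C_2$; realizability and failure of geometric simplicity are confirmed by explicit LMFDB examples.

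The main work is the $W_4$ case. The strategy is to enumerate, for each Newton polygon type (ordinary, almost ordinary, supersingular) and each geometric-isogeny type (geometrically simple, $\Fq$-simple but not geometrically simple, product of two non-isogenous elliptic curves), the possible $w_A$-conjugacy classes of subgroups $\cclass{A} \subset W_4$. For each candidate pair $(w_A, \cclass{A})$ the plan is: (i) restrict to subgroups containing complex conjugation and compatible with the weighting $w_A$; (ii) impose the constraints of \Cref{prop:div-properties}---transitivity on $R_A$ in the simple case, transitivity on the set $\cP_L$ of primes above $p$ in the splitting field, and the divisibility condition of \Cref{prop:div-properties}(4); and (iii) for each surviving class, either exhibit a realizing isogeny class from the LMFDB or rule it out via a finer divisor-map argument using the factorization of $p$ in the splitting field. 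For the nonsimple case $A \sim_{\Fq} E_1 \times E_2$ with $E_1 \not\sim_{\Fq} E_2$, the weighted permutation representation is the direct sum of those of the two factors, so the enumeration reduces to combining the elliptic curve cases from \Cref{lemma:main-thm-ecs} and tracking whether the splitting fields of $h_{E_1}$ and $h_{E_2}$ coincide (which determines whether the image is $V_4$ or a proper subgroup thereof).

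The main obstacle will be ruling out combinatorially-admissible pairs $(w_A, \cclass{A})$ in the simple $W_4$ case that do not realize as weighted permutation representations of any simple abelian surface. Even when a candidate $G$-equivariant divisor map $\vecsp{\QQ}{R_A} \to \vecsp{\QQ}{\cP_L}$ satisfies all necessary conditions of \Cref{prop:div-properties}, further obstructions may arise from the actual factorization of $p$ in $\OO_L$: ramification indices and residue degrees must be compatible with both the weighting $w_A$ and the $G_A$-action on $\cP_L$. For the nongeneric subgroups of $W_4$ (namely $C_4$, $V_4$, and the intransitive options) different embeddings yield non-$w_A$-conjugate pairs whose separation requires this finer analysis, and is where the bulk of the case-by-case work will lie.
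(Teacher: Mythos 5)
Your overall architecture matches the paper's: classify the $w_A$-conjugacy classes of subgroups of $W_4$, use the divisor map and \Cref{prop:div-properties} to exclude candidates, and certify the rest with LMFDB examples. But there are two places where the tools you name do not reach the conclusions the theorem requires. First, and most importantly, you never explain how the weighted permutation representation \emph{determines geometric simplicity}, which is part of the statement (the ``Geometrically simple'' column of the tables). The paper needs two dedicated lemmas here: (i) an ordinary geometrically simple abelian surface has angle rank $2$ --- proved by showing that any multiplicative relation $u_1^{r_1}u_2^{r_2}=1$ among normalized unit-root eigenvalues forces $r_1=-r_2$, hence $\alpha_2=\zeta_k\alpha_1$, hence $A\sim E^2$ over $\FF_{q^k}$; and (ii) a simple, non--geometrically-simple, non-supersingular abelian variety of prime dimension splits as $E^g$ over an extension (unique simple factor, \cite[Prop.~1.2.6.1]{chai-conrad-oort}) and therefore is ordinary with angle rank $1$. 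Without this equivalence (angle rank $2$ $\Leftrightarrow$ geometrically simple, in the ordinary case) you cannot separate $V_4$ from $C_4$ and $W_4$ in \Cref{tab:dim2-A}. Second, your exclusion machinery does not rule out $G_A\cong W_4$ for a \emph{simple supersingular} surface: the supersingular Newton polygon is a single slope-$\tfrac12$ segment containing three lattice points, so \Cref{prop:div-properties}\ref{enum:div-property-4} is vacuous, and since all weights equal $\tfrac12$ the divisor-map rank gives no information. The paper uses a genuinely different argument (\Cref{lemma:big-G}): if $G_A\cong W_{2d}$ and $A$ is not supersingular, a $(d\times d)$-minor of the Newton hyperplane matrix is nonsingular, forcing $\delta_A=d$; equivalently one can note that a supersingular $K_A$ lies in $\QQ(\zeta_m,\sqrt q)$, so $G_A$ is abelian and cannot be $W_4\cong D_4$. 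You need one of these.

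Two smaller points. In the nonsimple almost ordinary case, ``tracking whether the splitting fields coincide'' requires the nontrivial input that an ordinary and a supersingular elliptic curve satisfy $K_1\cap K_2=\QQ$ (the paper proves this: $\alpha_1=u\alpha_2$ would force $u$ to be a unit of absolute value $1$ in an imaginary quadratic field, hence a root of unity, contradicting ordinarity); without it you cannot pin down which subgroups of $V_4$ arise. Finally, in the simple-$W_2$ bullet your direct argument concludes $G_A$ is \emph{forced} to be $C_2$, whereas the statement asserts that the trivial Galois group also occurs; the paper handles this bullet by citation to Xing and Maisner--Nart rather than by argument, so you should reconcile your deduction with that classification before asserting either version.
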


The rest of the section is dedicated to proving \Cref{thm:main-thm-surfaces}. The third point follows from the definition and the section on elliptic curves. The second point follows from the classification given in \cite{Xing1994}; see also \cite[Theorem~2.9~(SS2)]{MaisnerNart02}. In the remainder of this section we assume $A$ has permutation representation contained in $W_4$.

\subsection{Permutation representations of subgroups of \texorpdfstring{$W_4$}{W4}}
The group $W_4$ is isomorphic to $D_4$, the dihedral group of order $8$. The following lemma classifies the $w$-conjugacy classes of subgroups of $W_4$.

\begin{lemma}
  \label{lemma:w-conj-w4}
  There are exactly $3$ transitive and $7$ intransitive subgroups of $W_4$ and each is recorded in \Cref{tab:W4-and-W6-subgroups}. Moreover, the only distinct subgroups which are $w_A$-conjugate are:
  \begin{enumerate}
  \item
    $\wl{C2.4.nt.b.1}$ and $\wl{C2.4.nt.b.2}$ when $A$ is either ordinary or supersingular, and
  \item
    $\wl{C2.4.nt.c.1}$ and $\wl{C2.4.nt.c.2}$ when $A$ is supersingular.
  \end{enumerate}
  Each $w_A$-conjugacy class gives rise to an isomorphism class of weighted permutation representation, and the angle ranks of these $w_A$-conjugacy classes $\cclass{} \subset W_4$ are recorded in Tables~\ref{tab:dim2-A}--\ref{tab:dim2-C-ns}.
\end{lemma}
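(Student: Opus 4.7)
The plan is to reduce the statement to a finite combinatorial verification using the isomorphism $W_4 \cong D_4$, the dihedral group of order $8$. First I would list all $10$ subgroups of $W_4$ (the trivial subgroup, five of order $2$, three of order $4$, and $W_4$ itself), which is a standard exercise for $D_4$ that can be confirmed by direct enumeration or by a short \texttt{Magma} computation. I would then determine transitivity on $X_4 = \{1, \bar{1}, 2, \bar{2}\}$: by orbit-stabilizer only subgroups of order divisible by $4$ can be transitive, and among these $W_4$, the cyclic group $\langle (1\,2\,\bar{1}\,\bar{2})\rangle$, and the Klein four group $\{e, (1\bar{1})(2\bar{2}), (12)(\bar{1}\bar{2}), (1\bar{2})(\bar{1}2)\}$ act transitively, while the other Klein four $\{e, (1\bar{1}), (2\bar{2}), (1\bar{1})(2\bar{2})\}$ has two orbits. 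This gives the count of $3$ transitive and $7$ intransitive subgroups claimed in the statement.

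Next I would compute $\operatorname{Stab}(w_A) \subset W_4$ in each of the three Newton polygon cases. Using the ordering convention of \Cref{def:indexing}, the weight function is $w_A = (0,0,1,1)$ in the ordinary case, $(0, \tfrac{1}{2}, \tfrac{1}{2}, 1)$ in the almost ordinary case, and $(\tfrac{1}{2}, \tfrac{1}{2}, \tfrac{1}{2}, \tfrac{1}{2})$ in the supersingular case. A direct check on the $8$ elements of $W_4$ shows the stabilizer is $\{e, (12)(\bar{1}\bar{2})\}$, $\{e, (2\bar{2})\}$, and all of $W_4$, respectively. To finish the conjugacy analysis, I would first list the $W_4$-conjugacy classes of subgroups and then, within each such class, determine which pairs of distinct subgroups are fused by $\operatorname{Stab}(w_A)$. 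The supersingular stabilizer acts by full $W_4$-conjugacy so there $w_A$-conjugacy coincides with $W_4$-conjugacy, whereas in the (almost) ordinary cases only very limited fusion occurs; the lemma's two exceptional coincidences would drop out of this bookkeeping by inspecting how the specific representatives labelled \wl{C2.4.nt.b.1}, \wl{C2.4.nt.b.2}, \wl{C2.4.nt.c.1}, \wl{C2.4.nt.c.2} sit inside $W_4$.

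The statement that each $w_A$-conjugacy class gives a single isomorphism class of weighted permutation representation is immediate from \Cref{def:weighted-perm-rep}: $w_A$-conjugation is precisely simultaneous conjugation of $\rho$ by an element of $\operatorname{Stab}(w_A)$, which is exactly the equivalence defining isomorphism of weighted permutation representations sharing the same weighting. Finally, for each representative subgroup $G \subset W_4$ I would compute the angle rank directly from \Cref{sec:newton-rep} as $\operatorname{rank}(M) - 1$, where $M$ is the $(2 \times |G|)$-matrix whose columns record $w_A(\sigma(1)), w_A(\sigma(2))$ as $\sigma$ runs over $G$; this is a rank computation over $\QQ$ of a matrix with at most $8$ columns and can be tabulated against Tables~\ref{tab:dim2-A}--\ref{tab:dim2-C-ns}.

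The only genuinely delicate point is the conjugacy bookkeeping: one must check case by case that every pair of $W_4$-conjugate subgroups is actually conjugate by an element of the small stabilizer group, and that no other spurious fusions occur. I expect this to be the main (though purely mechanical) obstacle, and for transparency I would perform it both by hand on the $10$ subgroups and independently in the \texttt{Magma} code referenced from \href{https://github.com/sarangop1728/Galois-Frob-Polys/blob/main/src/W2d-subgroups.m}{\texttt{src/W2d-subgroups.m}} to cross-check the labelling.
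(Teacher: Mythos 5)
Your proposal takes essentially the same route as the paper, which dispatches this lemma with a one-line appeal to ``direct calculation'' plus the accompanying \texttt{Magma} code: enumerating the ten subgroups of $W_4\cong D_4$, sorting them by transitivity on $X_4$, computing $\Stab(w_A)$ for each of the three Newton polygons, and checking which $W_4$-conjugate pairs are actually fused by that stabilizer. Your stabilizers $\{e,(12)(\bar1\bar2)\}$, $\{e,(2\bar2)\}$, and $W_4$ are correct, and the fusion bookkeeping does yield exactly the two exceptional coincidences in the statement (only the pairs $\{\wl{C2.4.nt.b.1},\wl{C2.4.nt.b.2}\}$ and $\{\wl{C2.4.nt.c.1},\wl{C2.4.nt.c.2}\}$ lie in non-singleton $W_4$-conjugacy classes, so no other fusion is possible).

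The one step that would fail as written is the angle-rank computation. You propose to take the $(2\times|G|)$-matrix whose columns are $\bigl(w_A(\sigma(1)),w_A(\sigma(2))\bigr)$ for $\sigma\in G$ and subtract $1$ from its rank. For $G=W_4$ in the ordinary case this matrix contains all four columns $(0,0),(1,0),(0,1),(1,1)$, so it has rank $2$ and your recipe returns angle rank $1$, whereas \Cref{tab:dim2-A} (correctly) records $2$; the same off-by-one occurs for \wl{C4.4.t.a.1} and \wl{V4.4.t.a.1} in the ordinary and almost ordinary cases. The issue is that the matrix must carry a column for \emph{every} symbol of $X_4$, i.e.\ for $1,2,\bar1,\bar2$ and not just $1,2$ (the literal ``$d\times|G|$'' phrasing in the paper's definition is misleading here); equivalently, and more safely, compute the rank of the Newton hyperplane matrix with entries $w_A(\sigma(i))-w_A(\sigma(\bar i))$, which equals the angle rank with no subtraction of $1$. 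With that correction your finite verification reproduces all the entries of Tables~\ref{tab:dim2-A}--\ref{tab:dim2-C-ns}.
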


\begin{proof}
  The first claim then follows by a direct calculation. The angle ranks are computed using our implementation of \Cref{prop:newton-matrix-rank} in the file \href{https://github.com/sarangop1728/Galois-Frob-Polys/blob/main/src/weighted-perm-rep.m}{\texttt{src/weighted-perm-rep.m}} of our \GitHub repository \cite{OurElectronic}.
\end{proof}

To prove \Cref{thm:main-thm-surfaces} it suffices to show that:
\begin{enumerate}
    \item the cases we claim do not occur, actually do not occur; and
    \item in the cases that do occur, the weighted permutation representation determines whether the abelian surface is geometrically simple.
\end{enumerate}
In the remaining cases, we provide an example in Tables~\ref{tab:dim2-A}--\ref{tab:dim2-C-ns}, which realizes the given weighted permutation representation. We remark that the angle ranks displayed in the LMFDB are numerical approximations, but we verify these examples explicitly via a slower (but deterministic) algorithm; see the file \href{https://github.com/sarangop1728/Galois-Frob-Polys/blob/main/tables/verify-angle-rank.m}{\texttt{tables/verify-angle-rank.m}} in our \GitHub repository \cite{OurElectronic}.

\subsection{Proof of \texorpdfstring{\Cref{thm:main-thm-surfaces}}{Theorem ??} in the simple case}
For a simple abelian surface, the permutation representation acts transitively except when $A$ is supersingular with real Frobenius eigenvalues. 

\subsubsection{The ordinary case}
In this case, every possible transitive weighted permutation representation occurs. Therefore, to prove the theorem in this case, it suffices to show that a simple ordinary abelian surface is not geometrically simple if and only if its weighted permutation representation is \wl{V4.4.t.a.1}. 

\begin{proposition}
    \label{lemma:simple-splits-power-of-EC}
    A simple abelian variety has a unique simple factor over every finite extension of the base field. In particular, if $A$ is simple of prime dimension and it splits over a finite extension of $\FF_q$, then it does so as the power of an elliptic curve.
\end{proposition}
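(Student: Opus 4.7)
The plan is to pass directly through Honda--Tate theory at the level of Frobenius polynomials. Let $A/\FF_q$ be simple, so that by \Cref{thm:HT}\ref{enum:HT1} we have $P_A(T) = h_A(T)^e$ for some $\QQ$-irreducible polynomial $h_A(T)$ with roots $\alpha_1,\dots,\alpha_{2d}$ (a full $\Gal(\Qbar/\QQ)$-orbit of $q$-Weil numbers). Base changing to $\FF_{q^n}$ raises the Frobenius to its $n$-th power, so
\[
P_{A \otimes \FF_{q^n}}(T) = \prod_{i=1}^{2d}\bigl(T-\alpha_i^n\bigr)^{e}.
\]

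First I would argue that the multiset $\{\alpha_1^n,\dots,\alpha_{2d}^n\}$ is a single Galois orbit with uniform multiplicity. Since the $\alpha_i$ form one orbit under $\Gal(\Qbar/\QQ)$, so do the $\alpha_i^n$; hence all the distinct values among the $\alpha_i^n$ share a common minimal polynomial $h(T) \in \ZZ[T]$. Moreover the map $\alpha \mapsto \alpha^n$ on the orbit is $\Gal(\Qbar/\QQ)$-equivariant, so every value in its image is attained the same number of times. Therefore $\prod_i (T-\alpha_i^n) = h(T)^m$ for some integer $m \geq 1$, and so $P_{A\otimes\FF_{q^n}}(T) = h(T)^{em}$. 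Applying \Cref{thm:HT}\ref{enum:HT2}--\ref{enum:HT3} to the Weil number whose minimal polynomial is $h(T)$ yields a simple abelian variety $B/\FF_{q^n}$ such that $A \otimes \FF_{q^n}$ is isogenous to $B^{f}$ for some $f \geq 1$; this $B$ is the unique simple isogeny factor.

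For the ``in particular'' statement, assume $\dim(A) = \ell$ is prime and that $A \otimes \FF_{q^n}$ is not simple, i.e., $f \geq 2$. Since $\dim(B)\cdot f = \dim(A) = \ell$ and $\ell$ is prime, the only possibility is $\dim(B) = 1$ and $f = \ell$, so $B$ is an elliptic curve and $A \otimes \FF_{q^n} \sim B^{\ell}$.

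The argument is essentially routine once Honda--Tate is invoked; the only real content is the observation that powering the roots preserves the single-orbit structure with uniform multiplicity, and I expect no genuine obstacle beyond writing this cleanly.
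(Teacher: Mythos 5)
Your argument is correct, but it takes a genuinely different route from the paper: the paper disposes of this proposition in one line by citing \cite[Proposition 1.2.6.1]{chai-conrad-oort}, whereas you give a self-contained proof through Honda--Tate at the level of Frobenius polynomials. The real content of your argument is the observation that $\alpha\mapsto\alpha^n$ is a $\Gal(\Qbar/\QQ)$-equivariant map on the single orbit of roots, so the $n$-th powers again form one orbit with uniform multiplicity and $P_{A\otimes\FF_{q^n}}(T)$ is a power of a single irreducible polynomial $h(T)$; that is exactly right. The only step I would spell out slightly more is the passage from ``$P_{A\otimes\FF_{q^n}}=h^{em}$'' to ``$A\otimes\FF_{q^n}\sim B^f$'': decompose $A\otimes\FF_{q^n}$ into simple isogeny factors, note that each factor's Frobenius polynomial is a power of an irreducible polynomial dividing $h^{em}$ and hence a power of $h$ itself, and then invoke \Cref{thm:HT}\ref{enum:HT2} to conclude that all simple factors lie in the one isogeny class determined by $h$. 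With that sentence added your proof is complete, and it has the advantage of being elementary and internal to the tools the paper has already set up, at the cost of being longer than the citation; the CCO reference proves the uniqueness of the simple factor in greater generality (and without passing through characteristic polynomials), which is what the paper buys by outsourcing the argument.
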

\begin{proof}
    This follows immediately from \cite[Proposition 1.2.6.1]{chai-conrad-oort}.
\end{proof}

\begin{coro}
    \label{coro:not-geom-simple=>angle-rank-1}
    If $A$ is a simple abelian variety of prime dimension which is not geometrically simple and not supersingular, then $A$ is ordinary and has angle rank $1$.
\end{coro}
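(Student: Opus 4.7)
The plan is to combine \Cref{lemma:simple-splits-power-of-EC} with the behavior of the Newton polygon and the angle rank under base change. Since $A$ is simple of prime dimension $g$ and not geometrically simple, \Cref{lemma:simple-splits-power-of-EC} produces a finite extension $\FF_{q^n}/\FF_q$ and an elliptic curve $E/\FF_{q^n}$ such that $A \sim_{\FF_{q^n}} E^g$.

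To show that $A$ is ordinary, I would use that the Newton polygon is invariant under both isogeny and base change (the slopes $\nu(\alpha)/\nu(q)$ are unaffected by the simultaneous replacements $\alpha \mapsto \alpha^n$ and $q \mapsto q^n$). Hence the Newton polygon of $A$ coincides with that of $E^g$, whose slopes are either all $\tfrac{1}{2}$ (if $E$ is supersingular) or consist only of $0$ and $1$ (if $E$ is ordinary). Since $A$ is not supersingular by hypothesis, $E$ must be ordinary, and so is $A$.

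For the angle rank, the Frobenius polynomial of $A/\FF_{q^n}$ satisfies $P_{A/\FF_{q^n}}(T) = P_E(T)^g$, so the eigenvalues $\alpha_i^n$ of the Frobenius of $A/\FF_{q^n}$ are exactly the two eigenvalues $\pi, \bar\pi$ of $E$. Consequently the subgroup of $\Qbar^\times$ generated by the $u_i^n = \alpha_i^n/\sqrt{q^n}$ equals $U_{E/\FF_{q^n}}$. On the other hand this subgroup is $U_A^n$, and the $n$-th power map on $U_A$ has finite kernel, so $U_A^n$ has the same $\QQ$-rank as $U_A$. Therefore
\[
  \delta_A = \rank_{\QQ}(U_A^n \otimes \QQ) = \rank_{\QQ}(U_{E/\FF_{q^n}} \otimes \QQ) = \delta_{E/\FF_{q^n}},
\]
and by \Cref{lemma:main-thm-ecs} (\Cref{tab:EC-A}) this equals $1$ because $E$ is ordinary.

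The only technical points to record are that $n$-th powers preserve the $\QQ$-rank of $U_A$ and that $U_A^n = U_{E/\FF_{q^n}}$; both are immediate from the definitions, so I do not anticipate a substantive obstacle.
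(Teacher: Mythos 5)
Your proposal is correct and follows essentially the same route as the paper: invoke \Cref{lemma:simple-splits-power-of-EC} to split $A$ as $E^g$ over an extension, deduce ordinarity from the Newton polygon of $E$, and use base-change invariance of the angle rank to conclude $\delta_A = \delta_E = 1$. The only difference is that you justify the base-change invariance of the angle rank explicitly (via the finite kernel of the $n$-th power map on $U_A$), where the paper simply asserts it.
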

\begin{proof}
    Since $A$ is simple but not geometrically simple by \Cref{lemma:simple-splits-power-of-EC} there exists an extension $\FF_{q^k}$ of $\FF_q$ over which $A$ becomes isogenous to $E^g$ for some ordinary elliptic curve $E/\FF_{q^k}$ (in particular $A$ is ordinary). Angle rank is invariant under base change, so it follows that $\delta_A = 1$.    
\end{proof}

\begin{lemma}
    \label{lemma:abs-simp-ord=>max-angle-rank}
    An ordinary geometrically simple abelian surface $A$ has angle rank $2$.
\end{lemma}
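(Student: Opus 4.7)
The plan is to rule out $\delta_A = 0$ and $\delta_A = 1$, since the angle rank of an abelian surface always satisfies $\delta_A \leq g = 2$.

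The case $\delta_A = 0$ is immediate: if $U_A$ is torsion, then every normalized Frobenius eigenvalue $\alpha_i/\sqrt{q}$ is a root of unity, so $\nu(\alpha_i) = 1/2$ for each $i$. The Newton polygon then consists of a single segment of slope $1/2$ and $A$ is supersingular, contradicting the hypothesis that $A$ is ordinary.

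For the case $\delta_A = 1$, the strategy proceeds in two steps. First, since $A$ is geometrically simple it is in particular simple, so $h_A(T)$ is irreducible of degree $4$ and the weighted permutation representation $G_A \hookrightarrow W_4$ acts transitively on the four roots. By \Cref{lemma:w-conj-w4} together with \Cref{tab:dim2-A}, the unique transitive $w_A$-conjugacy class in $W_4$ yielding angle rank $1$ (in the ordinary Newton polygon case) is the one isomorphic to $V_4$. Thus $\delta_A = 1$ forces $G_A \cong V_4$, meaning $K_A = \End(A) \otimes \QQ$ is a biquadratic Galois CM extension of $\QQ$ containing two imaginary quadratic subfields $k_1, k_2$ and a real quadratic subfield $K_A^+$.

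The second step is to show $G_A \cong V_4$ is incompatible with geometric simplicity. The plan is to enumerate the four CM types $\Phi \subset \Hom(K_A, \overline{\QQ})$ obtained by choosing one embedding from each complex conjugate pair, and to verify directly that each $\Phi$ is induced from a CM type on one of the imaginary quadratic subfields $k_1$ or $k_2$ (i.e.\ is imprimitive). By Shimura-Taniyama, a simple CM abelian variety with imprimitive CM type becomes isogenous over a finite extension to a power of a CM elliptic curve whose CM field is the inducing subfield; this contradicts geometric simplicity of $A$ via \Cref{lemma:simple-splits-power-of-EC}.

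The main obstacle is the second step, where one must verify rigorously that every CM type on a biquadratic $V_4$ CM field is imprimitive. While this is a finite enumeration over four CM types, it depends on how complex conjugation sits inside $V_4 = \Gal(K_A/\QQ)$ and on how the chosen embeddings restrict to each imaginary quadratic subfield; the use of Shimura-Taniyama to convert imprimitivity into a geometric isogeny decomposition is the key input.
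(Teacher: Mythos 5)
Your reduction of the problem is logically sound: $\delta_A \leq 2$ always holds, the $\delta_A = 0$ case is correctly eliminated by the valuation argument, and the identification of $\delta_A = 1$ with the $w_A$-conjugacy class \wl{V4.4.t.a.1} legitimately cites the computation in \Cref{lemma:w-conj-w4} together with \Cref{prop:newton-matrix-rank} (this is not circular, since the angle-rank column of \Cref{tab:dim2-A} is pure group theory, independent of which classes actually occur). The gap is in your final step. The statement you invoke --- that a CM abelian variety with imprimitive CM type becomes isogenous to a power of a CM elliptic curve --- is a theorem about abelian varieties in characteristic zero, where the CM type is defined via the action of $K_A$ on the tangent space. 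Your $A$ lives over $\FF_q$, where it carries no CM type in this sense, so Shimura--Taniyama does not apply to it directly. To run your argument you would need to (i) pass to the Serre--Tate canonical lift $\tilde A$ (available because $A$ is ordinary, and carrying the same endomorphism ring), (ii) check that every CM type on a biquadratic quartic CM field is induced from one of its two imaginary quadratic subfields (true, and an easy enumeration), (iii) apply the characteristic-zero splitting to $\tilde A_{\Qbar}$, and (iv) reduce the resulting isogeny modulo $p$ to obtain a splitting of $A_{\overline{\FF}_q}$. Steps (i) and (iv) are genuine inputs you have not supplied; as written, the phrase ``becomes isogenous over a finite extension'' conflates the finite-field and characteristic-zero settings.

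The paper avoids this machinery entirely and its argument is worth internalizing. It proves the contrapositive: if $\delta_A < 2$ there is a relation $u_1^{r_1}u_2^{r_2} = 1$ among the normalized eigenvalues; ordinarity forces $r_1, r_2 \neq 0$ (else some $u_i$ is a root of unity and $\nu(\alpha_i) = \tfrac12$), and comparing $p$-adic valuations forces $r_1 = -r_2$, so $\alpha_1/\alpha_2$ is a root of unity $\zeta_k$. Then $\alpha_1^k = \alpha_2^k$, so over $\FF_{q^k}$ the Frobenius polynomial is the square of a quadratic and \Cref{thm:HT} shows $A_{\FF_{q^k}}$ is isogenous to the square of an elliptic curve, contradicting geometric simplicity. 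This is elementary --- no group theory, no CM types, no lifting --- and the root of unity $\alpha_1/\alpha_2$ is precisely the finite-field shadow of the imprimitivity you are trying to exploit; if you want to keep your route, this is the mechanism your step (iv) would ultimately have to reproduce.
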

\begin{proof}
    We prove the contrapositive. Assume that $\delta_A < 2$. This implies that there exists a multiplicative relation among the normalized Frobenius eigenvalues $u_1^{r_1}u_2^{r_2} = 1$. Note that $r_1r_2 = 0$ implies that some $u_i$ is a root of unity, which would imply that $A$ is not ordinary. Thus, we have that both $r_1$ and $r_2$ are nonzero integers. Let $\alpha_1$ and $\alpha_2$ be the two Frobenius eigenvalues with $\nu(\alpha_1) = \nu(\alpha_2) = 0$ (i.e., $\alpha_i = u_i\sqrt{q}$). From the multiplicative relation we deduce that $r_1 = -r_2$, so that $(u_1/u_2)^{r_1} = 1$ and $\alpha_2 = \zeta_k\alpha_1$ where $\zeta_k$ is a $k^{\text{th}}$ root of unity. The Frobenius eigenvalues of the base change of $A$ to $\FF_{q^k}$ are precisely the $k$-th powers of the Frobenius eigenvalues of $A$. Because $\alpha_1^k = \alpha_2^k$, and $e = 1$ for ordinary abelian varieties, the Honda--Tate theorem implies that the base change of $A$ to $\FF_{q^k}$ is isogenous to the square of an elliptic curve, so $A$ is not geometrically simple.
\end{proof}

\begin{lemma} Let $A$ be a simple ordinary abelian surface. Then, exactly one of the following conditions holds.
\begin{enumerate}[label=(\arabic*)]
\item \label{enum:o-dim2-gs} $A$ is geometrically simple and $G_A \cong C_4$ or $W_4$.
\item \label{enum:o-dim2-ngs} $A$ is not geometrically simple and $G_A \cong V_4$.
\end{enumerate}
\end{lemma}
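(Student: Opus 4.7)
The plan is to combine the classification of transitive subgroups of $W_4$ (together with their angle ranks) with the two preceding results relating geometric simplicity to angle rank.

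First, since $A$ is simple and ordinary, the minimal polynomial $h_A(T)$ is irreducible of degree $4$ and totally complex; otherwise \Cref{lem:real-eigenvalues} would force $A$ to be supersingular. Therefore the embedding $\rho \colon G_A \hookrightarrow W_4$ has transitive image on $R_A$. By \Cref{lemma:w-conj-w4}, the transitive subgroups of $W_4$ are, up to $w_A$-conjugacy, isomorphic to one of $V_4$, $C_4$, or $W_4$. The corresponding angle ranks, recorded in \Cref{tab:dim2-A} and computable directly via \Cref{prop:newton-matrix-rank}, are $\delta = 1$ for the transitive $V_4$ and $\delta = 2$ for both $C_4$ and $W_4$.

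The dichotomy then follows at once. If $A$ is geometrically simple, \Cref{lemma:abs-simp-ord=>max-angle-rank} forces $\delta_A = 2$, which rules out $G_A \cong V_4$ and leaves $G_A \in \brk{C_4, W_4}$, giving case~\ref{enum:o-dim2-gs}. Conversely, if $A$ is not geometrically simple, then since $\dim A = 2$ is prime and $A$ is ordinary (hence not supersingular), \Cref{coro:not-geom-simple=>angle-rank-1} forces $\delta_A = 1$, whence $G_A \cong V_4$, giving case~\ref{enum:o-dim2-ngs}. The two cases are logically exhaustive and mutually exclusive.

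The substantive content of the argument is carried by \Cref{lemma:abs-simp-ord=>max-angle-rank} and \Cref{coro:not-geom-simple=>angle-rank-1}; the present lemma is essentially a synthesis of those results with the angle-rank classification of transitive subgroups of $W_4$. Accordingly, I do not expect any significant obstacle beyond ensuring that the three angle-rank values are read off correctly from \Cref{tab:dim2-A}.
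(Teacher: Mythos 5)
Your proof is correct and follows the same route as the paper: combine \Cref{lemma:abs-simp-ord=>max-angle-rank} and \Cref{coro:not-geom-simple=>angle-rank-1} with the angle ranks of the three transitive subgroups of $W_4$ computed in \Cref{lemma:w-conj-w4} and recorded in \Cref{tab:dim2-A}. The only addition is your explicit (and correct) remark that transitivity of $G_A$ on $R_A$ follows from simplicity and ordinarity, which the paper states just before the lemma rather than inside its proof.
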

\begin{proof} 
By \Cref{lemma:abs-simp-ord=>max-angle-rank} if $A$ is geometrically simple then $A$ has angle rank 2, and by \Cref{coro:not-geom-simple=>angle-rank-1} if $A$ is not geometrically simple then it has angle rank $1$. The claim follows from the angle ranks computed in \Cref{lemma:w-conj-w4} (and recorded in \Cref{tab:dim2-A}).
\end{proof}

\subsubsection{The almost ordinary case}
Every simple almost ordinary abelian surface is geometrically simple by \Cref{coro:not-geom-simple=>angle-rank-1}, so the following lemma completes the proof.

\begin{lemma}
\label{lemma:ao-dim2}
A simple almost ordinary abelian surface has Galois group $W_4$.
\end{lemma}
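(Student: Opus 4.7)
The plan is to reduce the classification to a counting argument on primes of the Galois closure above $p$. Since the almost ordinary Newton polygon $(0, 1/2, 1/2, 1)$ of $P_A(T)$ has slope~$0$ of multiplicity one, the Honda--Tate multiplier $e_A$ must divide~$1$, forcing $e_A = 1$ and $\deg h_A = 2g = 4$. In particular $K_A = \mathbb{Q}(\pi)$ is a quartic CM field; simplicity of $A$ makes $G_A$ a transitive subgroup of $W_4$, and \Cref{lemma:w-conj-w4} shows the only such possibilities are $W_4$, $C_4$, and the unique transitive $V_4$. It therefore suffices to rule out $G_A \cong C_4$ and $G_A \cong V_4$.

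In either excluded case $|G_A| = 4 = [K_A:\mathbb{Q}]$, so the action on roots is regular, $K_A$ is itself Galois over $\mathbb{Q}$, and the Galois closure $L$ equals $K_A$. I would then derive two contradictory bounds on $\#\mathcal{P}_L = \#\mathcal{P}_{K_A}$. For the lower bound, each prime $\mathfrak{p} \mid p$ of $K_A$ corresponds to an irreducible factor of $h_A(T) \in \mathbb{Q}_p[T]$ whose Newton polygon is a single segment; since the almost ordinary Newton polygon exhibits three distinct slopes ($0$, $1/2$, and $1$), we obtain $\#\mathcal{P}_L \geq 3$. For the upper bound, the slope-$1/2$ segment of the Newton polygon runs from $(1, 0)$ to $(3, 1)$, so has length $m = 2$ and contains exactly two lattice points (its endpoints); property~(4) of \Cref{prop:div-properties} then yields $\#\mathcal{P}_L \mid |G_A|/m = 2$. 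The bounds $3 \leq \#\mathcal{P}_L \leq 2$ are incompatible, and hence $G_A \cong W_4$.

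Before invoking \Cref{prop:div-properties} I would also note that $A$ has no real Frobenius eigenvalues: any such eigenvalue $\pm\sqrt{q}$ is a root of $T^2 - q$, which would force $h_A$ to have a quadratic factor and contradict its irreducibility of degree~$4$. The main subtlety is keeping straight the two different roles of ``primes above $p$'': the lower bound counts primes of $K_A$ via the local factorization of $h_A$, while the upper bound counts primes of $L$ via the divisor map of \Cref{sec:divisor-map}. In the Galois case these counts coincide, and it is precisely this rigidity that makes property~(4) of \Cref{prop:div-properties} fatal to the alternatives.
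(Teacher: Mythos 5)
Your proof is correct, and it reaches the contradiction by a slightly different route than the paper. Both arguments hinge on \Cref{prop:div-properties}\ref{enum:div-property-4} applied to the slope-$\tfrac{1}{2}$ segment (length $2$, exactly two lattice points), which gives $\#\cP_L \mid \tfrac{1}{2}\#G_A = 2$ once $G_A$ is assumed to be $C_4$ or $V_4$; the difference is in the complementary bound. The paper combines $\#\cP_L \le 2$ with \Cref{prop:newton-matrix-rank}, namely $\delta_A = \rank(\div_A) - 1 \le \#\cP_L - 1 \le 1$, and contradicts the angle rank $2$ recorded in \Cref{tab:dim2-B}, which is computed mechanically from the weighted permutation representations. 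You instead note that in the excluded cases $|G_A| = 4 = [K_A:\QQ]$ forces $L = K_A$, so primes of $L$ above $p$ biject with the irreducible factors of $h_A(T)$ over $\QQ_p$, each of which has a pure-slope Newton polygon; the three distinct slopes $0$, $\tfrac{1}{2}$, $1$ then give $\#\cP_L \ge 3$, incompatible with $\#\cP_L \le 2$. Your version is somewhat more self-contained, since it avoids appealing to the tabulated angle ranks, and it is close in spirit to the remark the authors place immediately after the lemma, which likewise argues from the local factorization of $P_A(T)$ over $\QQ_p$ that the quartic field cannot be Galois. Your preliminary reductions (deducing $e_A = 1$ from the multiplicity-one slope-$0$ segment, excluding real eigenvalues, and listing the transitive subgroups of $W_4$) are all sound.
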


\begin{proof}
Suppose for the sake of contradiction that $A$ is an almost ordinary abelian variety with Galois group $C_4$ or $V_4$. Let $\div_A$ be the corresponding divisor map as discussed in \Cref{prop:div-properties}. By point \ref{enum:div-property-4}, we have $\# \cP_L \mid 2$, where $\cP_L$ is the set of primes above $p$ in $L$. From \Cref{prop:newton-matrix-rank}, we have $\delta_A = \rank(\div_A) -1 \leq \#\cP_L - 1$, which contradicts \Cref{tab:dim2-B}.
\end{proof}

\begin{remark}
  One can also see \Cref{lemma:ao-dim2} as follows: by the theory of Newton polygons and the Honda--Tate theorem, $P_A(T)$ has an irreducible linear and quadratic factor over $\QQ_p$, so the quartic extension $K/\QQ$ is not Galois.
\end{remark}

\subsubsection{The supersingular case} 
It follows from the Honda--Tate theorem that every supersingular abelian variety has angle rank zero, because it is geometrically isogenous to the power of an elliptic curve.  

A glance at \Cref{tab:dim2-C} shows that to prove the first claim in \Cref{thm:main-thm-surfaces} in the simple supersingular case, it suffices to show that the permutation representation \wl{W4.4.t.a.1} does not occur, but in this case the angle rank is $2$. The second claim in \Cref{thm:main-thm-surfaces}, in the case of simple supersingular surfaces, also follows immediately from the Honda--Tate theorem.

\subsection{Proof of \texorpdfstring{\Cref{thm:main-thm-surfaces}}{Theorem ??} in the non-simple case}
If $A$ is isogenous to the square of an elliptic curve, then the claim follows immediately from \Cref{lemma:main-thm-ecs}. Therefore suppose that $A$ is isogenous over $\Fq$ to a product $E_1 \times E_2$ of non-isogenous elliptic curves. Note that when $A$ is supersingular, there is nothing to show so it suffices to consider the ordinary and almost ordinary cases. Let $\alpha_i$ and $\oalpha_i$ be the Frobenius eigenvalues of $E_i$ for each $i = 1, 2$.

\subsubsection{Non-simple ordinary abelian surfaces}
In this case, both $E_1$ and $E_2$ are ordinary elliptic curves and the Frobenius eigenvalues $\alpha_i,\oalpha_i$ are not real, and therefore $\cclass{A}$ contains complex conjugation. It is now easy to see that the Galois group is $\wl{V4.4.nt.a.1}$ if $\QQ(\alpha_1) \neq \QQ(\alpha_2)$ and $\wl{C2.4.nt.a.1}$ otherwise.

\begin{remark}
    In fact, using Lemma 5.3 of \cite{KrajicekScanlon00}, it is possible to show that in this case, the two elliptic curves are geometrically isogenous if and only if the Galois group is $\wl{C2.4.nt.a.1}$, but we will not need this.
\end{remark}

\subsubsection{Non-simple almost ordinary abelian surfaces}
In this case we may assume without loss of generality that $E_1$ is ordinary and $E_2$ is supersingular. First note that $\cclass{A}$ cannot be the $w_A$-conjugate to $\wl{C2.4.nt.b.1}$ or $\wl{C1.4.nt.a.1}$ since in this case the Frobenius eigenvalues of $E_1$ are fixed by $G_A$, which cannot occur. 

\begin{lemma}
    Let $E_1$ and $E_2$ be elliptic curves over $\FF_q$. Suppose that $E_1$ is ordinary and $E_2$ is supersingular. Then, the corresponding number fields satisfy $K_1 \cap K_2 = \QQ$.
\end{lemma}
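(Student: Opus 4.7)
The plan is to exploit the fact that the splitting behaviour of the rational prime $p$ in the CM field of an elliptic curve over $\FF_q$ encodes the ordinary/supersingular dichotomy, and then observe that these behaviours are incompatible for $E_1$ and $E_2$.

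First I would recall that since $E_1$ is ordinary, its Frobenius polynomial $T^2 - a_1 T + q$ satisfies $p \nmid a_1$. Hence $\pi_1 + \bar{\pi}_1 = a_1$ is a $p$-adic unit, so the two roots $\pi_1, \bar{\pi}_1$ have $p$-adic valuations $0$ and $1$ respectively (with the normalisation $\nu(q) = 1$). In particular $K_1 = \QQ(\pi_1)$ is imaginary quadratic, and the factorisation $\pi_1\bar{\pi}_1 = q$ together with the valuation computation shows that $(p)\cO_{K_1} = \mfp\bar{\mfp}$ with $\mfp \ne \bar{\mfp}$, so $p$ splits in $K_1$.

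Next I would analyse $K_2$ via \Cref{lem:real-eigenvalues} (or directly from the Honda--Tate classification of supersingular Frobenius polynomials). Either the Frobenius eigenvalues of $E_2$ are real (forcing $K_2 = \QQ$), or $K_2$ is imaginary quadratic. In the latter case both roots of the Frobenius polynomial of $E_2$ have $p$-adic valuation $\tfrac12$, so $p$ cannot split in $K_2$: a splitting $(p) = \mfq\bar{\mfq}$ would produce one root of valuation $0$ and one root of valuation $1$. Hence $p$ is either inert or ramified in $K_2$.

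Finally, I would conclude by combining the two. If $K_2 = \QQ$ then $K_1 \cap K_2 = \QQ$ trivially. Otherwise $K_1$ and $K_2$ are both imaginary quadratic, but $p$ splits in $K_1$ and does not split in $K_2$, so $K_1 \ne K_2$; two distinct quadratic fields intersect in $\QQ$. The main (minor) point of care is bookkeeping the subcase $K_2 = \QQ$, which only arises when $q$ is a square and $\pi_2 = \pm\sqrt{q}$; there is no substantive obstacle.
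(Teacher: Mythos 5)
Your overall strategy is sound and genuinely different from the paper's. The paper argues by contradiction from $K_1=K_2=K$: the ratio $u=\alpha_1/\alpha_2$ of the two Frobenius eigenvalues would be a unit of $\OO_K$ of archimedean absolute value $1$, hence a root of unity in the imaginary quadratic field $K$, forcing $\alpha_1$ itself to be a supersingular $q$-Weil number. You instead separate $K_1$ from $K_2$ by the splitting behaviour of $p$, which is a classical and perfectly legitimate route; your treatment of $E_1$ is correct (slopes $0$ and $1$, and since complex conjugation exchanges $\pi_1$ and $\overline{\pi}_1$ while they have different valuations at the fixed place, there must be two distinct primes of $K_1$ above $p$).

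There is, however, a gap in the step ``$p$ cannot split in $K_2$.'' Write $q=p^n$. If $(p)=\mfq\overline{\mfq}$ splits in $K_2$, then $(\pi_2)=\mfq^{a}\overline{\mfq}^{\,n-a}$ for some integer $0\le a\le n$, and the two Newton slopes are $a/n$ and $(n-a)/n$; a splitting does \emph{not} by itself force these to be $0$ and $1$. When $n$ is even the configuration $a=n/2$ is perfectly consistent with both slopes equal to $\tfrac12$, so the Newton polygon alone does not preclude $p$ splitting in the field generated by a supersingular quadratic Weil number (such Weil numbers exist, e.g.\ $13\zeta_3$ over $\FF_{169}$; what fails is that they arise from \emph{elliptic curves}). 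To close the gap you need the integrality condition of \Cref{thm:HT}(3): an elliptic curve has $e=1$, so every linear factor $g(T)=T-\beta$ of $h_{E_2}(T)$ over $\QQ_p$ must satisfy $\nu(\beta)\in\ZZ$; hence if $p$ split, the slopes would be $0$ and $1$ and $E_2$ would be ordinary. (Equivalently, one may cite Waterhouse's explicit list of supersingular traces, under which $p$ never splits in $K_2$.) With that repair the rest of your argument --- the trivial case $K_2=\QQ$, and the fact that two distinct quadratic fields meet in $\QQ$ --- goes through.
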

\begin{proof}
	If $K_2 = \QQ$ there is nothing to show. Suppose that this is not the case, and assume in search of a contradiction that $K_1 = K_2 = K$. Then $\alpha_1 = u \alpha_2$ for $u \in K$ of absolute value $1$. Note that $u$ can't be a root of unity, since that would imply that $\alpha_1$ is a supersingular $q$-Weil number. Since $\alpha_2$ is a supersingular $q$-Weil number, some power of $u$ is an \emph{algebraic integer}. This implies that $u$ is also an algebraic integer which has length $1$ in all complex embeddings, and thus $u \in \OO_K^\times$. But $K$ is quadratic imaginary, implying that $u$ is a root of unity, a contradiction.
\end{proof}

\FloatBarrier
\section{Abelian threefolds}\label{sec:threefolds}
\begin{theorem}
  \label{thm:main-thm-3folds}
  Let $A$ be a simple abelian threefold.
  \begin{itemize}
      \item \thmemph{Permutation representations in $W_6$}. If $A$ has permutation representation contained in $W_6$, then the possible images of the weighted permutation representation associated to a simple abelian threefold are given in Tables~\ref{tab:threefoldA}--\ref{tab:threefoldE}. Each table corresponds to one Newton polygon. The weighted permutation representation determines whether $A$ is geometrically simple or not, and this information can be found in the tables. 
      \item \thmemph{Permutation representations in $W_2$}. If the permutation representation of $A$ is not contained in $W_6$, then it is contained in $W_2$. In this case $e_A = 1$, where $e_A$ is as in the statement of the Honda Tate--theorem. Such abelian varieties have Galois group $C_2$, angle rank $1$, and Newton polygon type $(D)$. They are geometrically simple. 
  \end{itemize}
\end{theorem}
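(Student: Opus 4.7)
The plan is to reduce to two cases based on the degree of the minimal Frobenius polynomial $h_A(T)$. Since $A$ is a simple abelian threefold, we have $P_A(T) = h_A(T)^{e_A}$ with $\deg P_A = 6$; by \Cref{lem:real-eigenvalues} a simple abelian variety with real Frobenius eigenvalues has dimension at most $2$, so $h_A$ is totally complex and $\deg h_A$ is even. Hence $\deg h_A \in \{2, 6\}$, which is precisely the dichotomy ``permutation representation in $W_2$'' versus ``permutation representation in $W_6$'' appearing in the statement, with $d_A = \deg h_A / 2$.

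For the $W_6$ bullet, I would proceed in three steps. First, enumerate the transitive subgroups of $W_6$ which contain complex conjugation, a finite calculation carried out by the \Magma code of \cite{OurElectronic}. Second, for each such subgroup $G$ and each of the Newton polygon types (A)--(E), use the constraints from \Cref{prop:div-properties} (transitivity of $G_A$ on $R_A$ and on the primes $\cP_L$ above $p$, and the divisibility condition~\ref{enum:div-property-4} for single-lattice-point segments) together with the angle-rank formula of \Cref{prop:newton-matrix-rank} to identify which triples of Galois group, Newton polygon, and angle rank are compatible. Third, for each surviving triple, either exhibit an explicit example from the LMFDB \cite{lmfdb} (verified via the deterministic algorithm in \texttt{tables/verify-angle-rank.m}) or rule it out by a finer analysis of admissible $G_A$-equivariant divisor maps $\vecsp{\QQ}{R_A} \to \vecsp{\QQ}{\cP_L}$. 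Geometric simplicity is then determined row by row: by \Cref{coro:not-geom-simple=>angle-rank-1}, only rows with $\delta_A = 1$ can host non-geometrically simple examples, and for each such row an explicit non-geometrically simple example must be exhibited.

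For the $W_2$ bullet, the Honda--Tate theorem directly pins down the invariants. The Galois group of any quadratic CM field is $C_2$, and the local-invariant refinement of \Cref{thm:HT} forces the $p$-adic valuations of the roots of $h_A$ into $\{1/3, 2/3\}$ in order for $A$ to have dimension $3$; consequently the Newton polygon of $P_A$ has slopes $1/3,1/3,1/3,2/3,2/3,2/3$, which is type (D). The angle rank is then $1$: the normalized eigenvalues $u = \pi/\sqrt{q}$ and $\overline{u} = \overline{\pi}/\sqrt{q}$ satisfy the single relation $u\overline{u} = 1$, and neither is a root of unity since $\nu(u) \neq 0$, giving $\rank(U_A \otimes \QQ) = 1$. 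Geometric simplicity follows because the slopes $1/3, 2/3$ cannot arise as the Newton polygon slopes of the cube of an elliptic curve (whose slopes lie in $\{0,1\}$ or $\{1/2,1/2\}$), and Newton polygon slopes are invariant under base change; so $A$ cannot become isogenous to $E^3$ over any extension of $\Fq$, and we conclude via \Cref{lemma:simple-splits-power-of-EC}.

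The main obstacle is the $W_6$ case, where distinguishing cases sharing the same Galois group and Newton polygon but differing in angle rank (as exemplified by \avlink{3.2.ac_a_d} and \avlink{3.2.a_a_ad}) requires careful divisor-map analysis rather than purely group-theoretic arguments. Producing explicit examples for each claimed row also requires systematic searches through the LMFDB, since the database's angle ranks are numerical approximations and must be verified deterministically.
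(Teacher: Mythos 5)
Your handling of the $W_2$ bullet takes a genuinely different route from the paper, which simply cites Xing \cite{Xing1994} (via \cite[Theorem 6.1.1, Lemma 6.1.2]{APBS2023}) for that case. Your direct argument --- $\deg h_A$ divides $6$ and is even, so $\deg h_A \in \{2,6\}$; a quadratic CM field gives $G_A \cong C_2$; the Honda--Tate condition on $e_A$ forces $\nu(\alpha) \in \{1/3,2/3\}$ and hence Newton polygon (D); the single relation $u\bar{u}=1$ with $\nu(u)\neq 0$ gives $\delta_A = 1$; and slope invariance under base change plus \Cref{lemma:simple-splits-power-of-EC} gives geometric simplicity --- is correct and self-contained, which is a real gain over the citation. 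One thing to flag: your own computation gives $e_A = 3$ (since $\deg h_A = 2$ and $\deg P_A = 6$), so the assertion ``$e_A = 1$'' in the statement is not what your argument (or any argument) establishes; this appears to be an error in the statement that you should surface rather than pass over. For the $W_6$ bullet your strategy --- enumerate the transitive subgroups of $W_6$ containing $\iota$, constrain each (group, Newton polygon) pair via \Cref{prop:div-properties} and \Cref{prop:newton-matrix-rank}, exhibit verified LMFDB examples, and rule out the remainder by analysing admissible divisor maps --- is essentially the paper's, down to the same lemmas.

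The one genuine gap is the supersingular case of the $W_6$ bullet. There every weight equals $\tfrac12$, so $\div_A(\alpha) = \tfrac{1}{2}\div_A(q)$ for every root $\alpha$; this map is $G$-equivariant for \emph{every} subgroup $G \subset W_6$, and the angle rank of every weighted permutation representation is $0$. Consequently neither \Cref{prop:div-properties} nor any ``finer analysis of admissible $G_A$-equivariant divisor maps'' can exclude $W_6$, $C_2 \wr C_3$, or $D_6$ for a simple supersingular threefold, yet Table~\ref{tab:threefoldE} asserts that only $C_6$ occurs. The paper closes this with a separate field-theoretic input: $\delta_A = 0$ forces $K = \QQ(\zeta_m\sqrt{q})$, and a short case analysis on $m$ shows $K$ must be $\QQ(\zeta_7)$ or $\QQ(\zeta_9)$, whence $G_A \cong C_6$. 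You need this (or an equivalent argument about supersingular Weil numbers) to complete your step three; the divisor-map toolkit you propose is provably insufficient there.
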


The second point follows from \cite{Xing1994}; see \cite[Theorem 6.1.1, Lemma 6.1.2]{APBS2023} for more detail. In the remainder of this section we assume all abelian threefolds in question are simple and have permutation representation contained in $W_6$. We first classify the weighted permutation representations that occur, and then in \Cref{subsec:geom-simple} classify whether weighted permutation representations that occur are geometrically simple or not. The rest of this section is dedicated to proving \Cref{thm:main-thm-3folds}.

\subsection{Signed permutations on three elements}\label{sec:signed-permutations-3}
The following lemma follows by a direct calculation in \texttt{Magma}. See the file \href{https://github.com/sarangop1728/Galois-Frob-Polys/blob/main/src/W2d-subgroups.m}{\texttt{src/W2d-subgroups.m}} in our \GitHub repository \cite{OurElectronic}.
\begin{lemma}
  \label{prop:trans-subs}
  There are exactly $10$ transitive subgroups in $W_6$ which contain the complex conjugation element $\iota \in W_6$, and each is listed in \Cref{tab:W4-and-W6-subgroups}. These $10$ subgroups are contained in exactly $4$ $W_6$-conjugacy classes, namely those of:
  \begin{enumerate}
  \item
    $W_6$,
  \item
    $C_2 \wr C_3$ (transitive label \texttt{6T6}) generated by $(123)(\bar{1}\bar{2}\bar{3})$, $(1\bar{1})$, $(2\bar{2}),$ and $(3\bar{3})$,
  \item
    $D_6$ generated by $(123)(\bar{1}\bar{2}\bar{3})$ and $(12)(\bar{1}\bar{2})$,
  \item
    $C_6$ generated by $(123\bar{1}\bar{2}\bar{3})$.
  \end{enumerate}
  Moreover, for each possible Newton polygon of an abelian threefold $A$, the $w_A$-conjugacy classes of transitive subgroups of $W_6$ are recorded in Tables~\ref{tab:threefoldA}--\ref{tab:threefoldE}.
\end{lemma}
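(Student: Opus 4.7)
The plan is to verify this by a finite calculation in $W_6$, a group of order $48$. The conceptual backbone is the split exact sequence
\[
1 \to (C_2)^3 \to W_6 \to S_3 \to 1,
\]
where the quotient records the induced action on the three pairs $\{i,\bar{i}\}$, and the complex conjugation element $\iota = (1\bar{1})(2\bar{2})(3\bar{3})$ corresponds to the all-ones vector $(1,1,1)$ in $(C_2)^3 \cong \FF_2^3$.

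\textbf{Isomorphism types.} For a subgroup $H \leq W_6$, write $\overline{H} \leq S_3$ for its image and $K = H \cap (C_2)^3$ for its kernel. Transitivity on $X_6$ forces $\overline{H}$ to be transitive on $\{1,2,3\}$, hence $\overline{H} \in \{C_3, S_3\}$; it also forces $K$ to be nonzero, which is automatic from $\iota \in K$. Regarding $K$ as an $\FF_2[\overline{H}]$-submodule of $\FF_2^3$ containing $\iota$, one checks that the only such submodules (for $\overline{H}$ equal to either $C_3$ or $S_3$) are $\langle \iota \rangle$ and all of $\FF_2^3$, since $\FF_2^3$ decomposes as $\langle \iota \rangle$ direct sum with the irreducible $2$-dimensional augmentation representation. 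The four resulting pairs $(\overline{H},K)$ produce, respectively, groups of order $6,12,24,48$, which identify (using the splitting and that the extension by $\langle \iota \rangle$ is central) as $C_6$, $D_6$, $C_2\wr C_3$ and $W_6$. This accounts for the four isomorphism types and the four $W_6$-conjugacy classes, since $W_6$ acts transitively by conjugation on the pairs $(\overline{H},K)$ with given invariants.

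\textbf{The count of ten subgroups.} For each conjugacy class I would compute the normalizer $N_{W_6}(H)$ using the explicit generators given in the statement; the number of $W_6$-conjugates of $H$ is then $[W_6 : N_{W_6}(H)]$. The contributions $1$ for $W_6$, and small index computations for the remaining three classes (which can be read off once one writes down generators inside the wreath product) sum to the stated total of ten. This is mechanical and performed in the file \texttt{src/W2d-subgroups.m} of the \GitHub repository.

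\textbf{The $w_A$-conjugacy refinement.} For each of the Newton polygon types (A)--(E) of an abelian threefold, the weighting $w_A \colon X_6 \to \QQ_{\geq 0}$ is determined by \Cref{def:indexing}, and the refinement of $W_6$-conjugacy into $\Stab(w_A)$-conjugacy classes produces the rows of Tables~\ref{tab:threefoldA}--\ref{tab:threefoldE}. The main (and only) obstacle is not conceptual but organizational: keeping track, for each Newton polygon, of which pairs of $W_6$-conjugate subgroups become non-conjugate under the smaller stabilizer $\Stab(w_A)$. This bookkeeping is carried out via the labelling implementation described in \Cref{sec:subgroup-labelling} and completes the proof.
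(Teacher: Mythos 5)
Your proposal is correct in substance, but it takes a genuinely different route from the paper: the paper's entire proof of this lemma is a one-line appeal to a direct \textsc{Magma} computation in \texttt{src/W2d-subgroups.m}, whereas you supply a structural argument via the split exact sequence $1 \to (C_2)^3 \to W_6 \to S_3 \to 1$ and the $\FF_2[\overline{H}]$-submodule analysis of the kernel, deferring only the final counts to the machine. Your approach buys an explanation of \emph{why} there are exactly four conjugacy classes (the decomposition $\FF_2^3 = \langle \iota \rangle \oplus W$ with $W$ irreducible really does force $K \in \{\langle\iota\rangle, \FF_2^3\}$, and that part of your argument is airtight), which the paper's proof does not provide; the paper's approach buys brevity and, more importantly, certainty about the bookkeeping in Tables~\ref{tab:threefoldA}--\ref{tab:threefoldE}, which neither you nor the paper attempts to verify by hand.

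One step of your write-up is justified incorrectly as stated: you conclude that the four pairs $(\overline{H},K)$ give exactly four $W_6$-conjugacy classes ``since $W_6$ acts transitively by conjugation on the pairs with given invariants.'' Transitivity on the \emph{pairs} is vacuous here (each of the four pairs is literally unique, as $S_3$ has a unique transitive proper subgroup and the two admissible kernels are canonical), and it does not imply that two distinct subgroups $H \neq H'$ realizing the \emph{same} pair are conjugate. That is the actual content: e.g.\ there are four order-$6$ subgroups with invariants $(C_3,\langle\iota\rangle)$ and four order-$12$ subgroups with invariants $(S_3,\langle\iota\rangle)$, and one must check these form single conjugacy classes. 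This does hold --- for the order-$12$ case it follows from $H^1(S_3,W)=0$ (all complements of $W$ in $W_6/\langle\iota\rangle$ are conjugate), and for the order-$6$ case from the fact that $(1+\sigma)$ surjects onto $W$ so that conjugation by $(C_2)^3$ already permutes the four lifts transitively --- and in any case your normalizer computation would detect it, but the sentence as written does not prove it. With that step repaired (or simply delegated to the computation, as the paper does wholesale), the argument is complete.
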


Similarly to the case of surfaces, to prove \Cref{thm:main-thm-3folds} it suffices to show that:
\begin{enumerate}
    \item the cases we claim do not occur, actually do not occur; and
    \item in the cases that do occur, the weighted permutation representation determines whether the abelian surface is geometrically simple (this is in \Cref{subsec:geom-simple}).
\end{enumerate}
In the remaining cases, we provide an example in Tables~\ref{tab:threefoldA}--\ref{tab:threefoldE} which realizes the given weighted permutation representation. We now prove $(1)$.

\subsection{Proof of \texorpdfstring{\Cref{thm:main-thm-3folds}}{Theorem ??} in the ordinary case}\label{sec:o-3}
In this case, the table shows that every possible weighted permutation representation occurs, so there is nothing to prove. 

\subsection{Proof of \texorpdfstring{\Cref{thm:main-thm-3folds}}{Theorem ??} in the almost ordinary case}\label{sec:ao-3}

We first classify weighted permutation representations that occur, and then give proofs of geometric simplicity. The following two lemmas complete the classification of weighted permutation representations in the case of simple almost ordinary abelian threefolds.

\begin{lemma}
\label{lem:ao-not-C6}
An almost ordinary abelian threefold cannot have Galois group $C_6$.
\end{lemma}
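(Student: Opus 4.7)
The plan is to bound $\#\cP_L$ via \Cref{prop:div-properties}\ref{enum:div-property-4}, and then use the resulting structure of the decomposition groups to force a contradiction from the shape of the almost ordinary Newton polygon. Suppose for contradiction that $A$ is a simple almost ordinary abelian threefold with $G_A \cong C_6$. Since $C_6$ is abelian, the splitting field $L = K_A$ is Galois over $\QQ$ of degree $6$, and $h_A$ is irreducible of degree $6$.

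The almost ordinary Newton polygon has slopes $0, 0, 1/2, 1/2, 1, 1$, and its slope-$1/2$ segment from $(2,0)$ to $(4,1)$ has horizontal length $m = 2$ containing exactly two lattice points. Applying \Cref{prop:div-properties}\ref{enum:div-property-4} gives $\#\cP_L \mid 3$, and by transitivity of the $G_A$-action on $\cP_L$ we have $\#\cP_L \in \{1, 3\}$. The case $\#\cP_L = 1$ is immediate: it would force $h_A$ to be irreducible over $\Qp$, but an irreducible $\Qp$-polynomial has a single-slope Newton polygon, contradicting the three distinct slopes $0, 1/2, 1$.

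The main obstacle is the case $\#\cP_L = 3$. Here the decomposition group $D_{\mfp} \subset G_A$ at each prime $\mfp \in \cP_L$ has order $6/3 = 2$, and since $C_6$ has a unique subgroup of order $2$, we must have $D_{\mfp} = \langle \iota \rangle$. Using the identification $D_{\mfp} \cong \Gal(L_{\mfp}/\Qp)$ and the fact that all roots of $h_A$ lie in $L_{\mfp}$, the $\Qp$-irreducible factors of $h_A$ correspond to orbits of $\langle \iota \rangle$ on the roots, namely the three complex conjugate pairs $\{\alpha, \overline{\alpha}\}$. Since $\nu(\alpha) + \nu(\overline{\alpha}) = 1$, the almost ordinary slopes must distribute among these three pairs as one pair $\{1/2, 1/2\}$ and two pairs $\{0, 1\}$. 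Finally, an irreducible $\Qp$-factor has a single slope, so the two pairs of type $\{0, 1\}$ cannot correspond to $\Qp$-irreducible factors, yielding the desired contradiction.
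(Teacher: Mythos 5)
Your argument is correct and is essentially the paper's: both apply \Cref{prop:div-properties}\ref{enum:div-property-4} to the slope-$\tfrac{1}{2}$ segment to obtain $\#\cP_L \mid 3$, which forces the unique order-$2$ element of $C_6$ (complex conjugation) to fix every prime above $p$, contradicting that a conjugate pair $\{\alpha,\oalpha\}$ with slopes $\{0,1\}$ cannot satisfy $\nu(\alpha)=\nu(\oalpha)$. The only difference is presentational: the paper reduces to the explicit class \wl{C6.6.t.a.2} with generator $\sigma=(123\bar{1}\bar{2}\bar{3})$ and notes that the valuation sequence $(0,0,\tfrac{1}{2},1,1,\tfrac{1}{2})$ fails to be $3$-periodic, whereas you phrase the same obstruction via decomposition groups and the $\Qp$-factorization of $h_A$ (handling $\#\cP_L=1$ separately).
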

\begin{proof}
Suppose for the sake of contradiction that $A$ is an almost ordinary abelian threefold with Galois group $G_A \cong C_6$. Note that all weighted permutation representations of $C_6$ are conjugate in the almost ordinary case, so it suffices to show that the image of the weighted permutation representation is not conjugate to $\wl{C6.6.t.a.2}$, which is generated by $\sigma = (123\bar{1}\bar{2}\bar{3})$. We now show that this weighted permutation representation doesn't lift to a divisor map with the properties listed in \Cref{prop:div-properties}.

By \Cref{prop:div-properties}\ref{enum:div-property-4}, we have $\# \cP_L \mid 3$ where $\cP_L$ is the set of primes of $K = L$ dividing $p$. But then the action of $G_A$ on $\cP_L$ factors through $C_3$ and in particular the sequence 
\[
(\nu(\alpha_1), \nu(\sigma \alpha_1), ..., \nu(\sigma^5 \alpha_1)) = \left(0, 0, \tfrac{1}{2}, 1, 1, \tfrac{1}{2} \right)
\]
should be $3$-periodic, a contradiction.
\end{proof}

\begin{lemma}
\label{lem:aoC6}
An almost ordinary abelian threefold with Galois group $D_6$ has angle rank $2$.
\end{lemma}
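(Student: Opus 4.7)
The plan is to combine the divisor map machinery of \Cref{sec:divisor-map} with a direct matrix calculation. By \Cref{prop:trans-subs}, there is a unique $W_6$-conjugacy class of transitive $D_6$-subgroups of $W_6$ containing complex conjugation, and under the almost ordinary weighting
\[
w(1) = w(2) = 0, \quad w(3) = w(\bar 3) = \tfrac{1}{2}, \quad w(\bar 1) = w(\bar 2) = 1,
\]
a short check shows this yields a single $w_A$-conjugacy class of weighted permutation representations. Therefore it suffices to analyse a single representative, generated by $\sigma = (123)(\bar 1 \bar 2 \bar 3)$, $\tau = (12)(\bar 1 \bar 2)$, and $\iota$.

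For the upper bound $\delta_A \leq 2$, I would apply \Cref{prop:div-properties}\ref{enum:div-property-4} to the slope-$\tfrac{1}{2}$ segment of the Newton polygon (of length $m = 2$ with exactly two lattice points) to conclude $\#\cP_L \mid 6$. Together with the transitivity of $G_A$ on $\cP_L$ from \Cref{prop:div-properties}, this forces $\#\cP_L \in \{1, 2, 3, 6\}$. When $\#\cP_L \leq 3$, \Cref{prop:newton-matrix-rank} gives $\delta_A = \rank(\div_A) - 1 \leq 2$ directly. When $\#\cP_L = 6$, I would invoke the pairing $\div_A([\alpha]) + \div_A([\bar\alpha]) = \div_A(q)$, the $G_A$-equivariance of $\div_A$, and the decomposition of $\QQ\langle\cP_L\rangle \cong \mathrm{Ind}_D^{G_A}\mathbf{1}$ for the order-$2$ decomposition group $D$ into $D_6 \cong S_3 \times C_2$-irreducibles, in order to cut the image of $\div_A$ down to dimension at most $3$.

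For the lower bound $\delta_A \geq 2$, I would compute the matrix $M$ from the definition of the angle rank of a weighted permutation representation in \Cref{sec:newton-rep}. With the generator $\sigma = (123)(\bar 1 \bar 2 \bar 3)$ of the cyclic-$3$ subgroup, the vectors $(w(e(i)))_{i=1}^3 = (0, 0, \tfrac{1}{2})$, $(w(\sigma(i)))_{i=1}^3 = (0, \tfrac{1}{2}, 0)$, and $(w(\sigma^2(i)))_{i=1}^3 = (\tfrac{1}{2}, 0, 0)$ are linearly independent, so $\rank(M) = 3$ and hence $\delta_A \geq 2$. Combined with the upper bound above, $\delta_A = 2$.

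The main obstacle is the upper bound in the case $\#\cP_L = 6$: here the image of $\div_A$ lives a priori in the $6$-dimensional space $\QQ\langle \cP_L\rangle$, and the pairing relation $\div_A([\alpha]) + \div_A([\bar\alpha]) = \div_A(q)$ alone only yields an upper bound of $4$ on its dimension. Identifying which irreducible $D_6$-subrepresentations of $\mathrm{Ind}_D^{G_A}\mathbf{1}$ can actually appear in the image, and using the specific almost ordinary weight structure to exclude the extra irreducible component, is the essential technical step.
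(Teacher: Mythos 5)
Your proposal breaks at the very first step: the claim that the almost ordinary weighting yields \emph{a single} $w_A$-conjugacy class of $D_6$-weighted permutation representations is false, and this is precisely the point the lemma has to address. The four $D_6$-subgroups of $W_6$ containing $\iota$ form one $W_6$-conjugacy class, but the stabilizer of the almost ordinary weight function is small (it is generated by $(12)(\bar{1}\bar{2})$ and $(3\bar{3})$), and under it the class splits: \wl{D6.6.t.a.1} and \wl{D6.6.t.a.3} have weighted angle rank $3$, while \wl{D6.6.t.a.2} and \wl{D6.6.t.a.4} have weighted angle rank $2$ (see \Cref{tab:threefoldB}). Since the angle rank of $A$ equals that of its weighted representation (\Cref{prop:newton-matrix-rank}), the lemma is not a statement you can verify on ``the'' representative by bounding $\delta_A$ above and below; it is a \emph{non-realizability} statement: one must show the angle-rank-$3$ classes cannot arise from an actual almost ordinary threefold. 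Worse, the representative you chose, $\langle (123)(\bar1\bar2\bar3),\,(12)(\bar1\bar2),\,\iota\rangle$, is exactly the diagonal $S_3$ times $\iota$, i.e.\ the group \wl{D6.6.t.a.1} --- the class that must be \emph{ruled out}. Its weighted angle rank is $3$, not $2$: its Newton hyperplane matrix contains the rows $(-1,-1,0)$, $(-1,0,-1)$, $(0,-1,-1)$, which are independent. Your ``$\rank(M)=3$ hence $\delta_A\geq 2$'' computation only looked at the three unbarred columns and so cannot detect this; the statement you set out to prove for this representative is simply false.

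The remaining sketch does not repair this. When $\#\cP_L\leq 3$ your bound $\delta_A\leq \#\cP_L-1\leq 2$ is fine (it is the argument of \Cref{lemma:ao-dim2}), but for the problematic class the paper shows $\#\cP_L\leq 3$ is impossible: the sequence of valuations $\left(\nu(\alpha_1),\nu(\sigma\alpha_1),\dots,\nu(\sigma^5\alpha_1)\right)=\left(0,1,\tfrac12,1,0,\tfrac12\right)$ along the order-$6$ element of \wl{D6.6.t.a.1} is not $1$-, $2$- or $3$-periodic, forcing $\#\cP_L=6$. In that case your plan to decompose $\QQ\langle\cP_L\rangle\cong\mathrm{Ind}_D^{G_A}\mathbf{1}$ into $D_6$-irreducibles cannot by itself cut the image of $\div_A$ down to dimension $3$: that image is the cyclic $G_A$-module generated by the single vector $\div_A(\alpha_1)$, and whether its span has dimension $3$ or $4$ depends on the actual weights attached to the orbit (it \emph{is} $4$-dimensional for \wl{D6.6.t.a.1} under the almost ordinary weighting, which is exactly why that class has angle rank $3$), so no purely module-theoretic argument decides it. The paper instead exploits the explicit hexagonal action of $D_6$ on the six primes: applying the reflection $\tau=(1\bar3)(2\bar2)(\bar13)$ (which is not complex conjugation) to $\div_A(\alpha_1)$ gives $\div_A(\oalpha_3)$ with $\div_A(\alpha_1)+\div_A(\oalpha_3)=\div_A(q)$, so $\alpha_1\oalpha_3/q$ is a root of unity --- a nontrivial multiplicative relation contradicting angle rank $3$. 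Some argument of this concrete kind, pinned to the specific bad class, is what your proposal is missing.
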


\begin{proof}
From \Cref{tab:threefoldB}, it suffices to show that the weighted permutation representation \wl{D6.6.t.a.1}, which has angle rank $3$, does not occur. Suppose for the sake of contradiction that $A$ is an almost ordinary abelian threefold with weighted permutation representation \wl{D6.6.t.a.1}. We exhibit a nontrivial multiplicative relation between the Frobenius eigenvalues (a contradiction to the angle rank being maximal) -- in particular we show that $\alpha_1\oalpha_3/q$ is a root of unity.

\newcounter{claimcount}\setcounter{claimcount}{1}
\claim[\theclaimcount]{$\#\cP_L = 6$.} \addtocounter{claimcount}{1}
    By \Cref{prop:div-properties}\ref{enum:div-property-4}, we have $\#\cP_L \mid 6$. The order $6$ cyclic subgroup of \wl{D6.6.t.a.1} is generated by $\sigma = (1 \bar{2} 3 \bar{1} 2 \bar{3})$. As in the proof of \Cref{lem:ao-not-C6}, because the sequence 
    \[
(\nu(\alpha_1), \nu(\sigma \alpha_1), ..., \nu(\sigma^5 \alpha_1 )) = \left( 0, 1, \tfrac{1}{2}, 1, 0, \tfrac{1}{2} \right)
\]
is not periodic, we must have $\#\cP_L = 6$.
    
\claim[\theclaimcount]{$\alpha_1 \oalpha_3/q$ is a root of unity.}
    Let $\mfp_1$ be the prime of $\OO_L$ corresponding to the valuation $\nu$ and let
    \[
        (\mfp_1,\op_2,\mfp_3,\op_1,\mfp_2,\op_3) = (\mfp_1, \sigma(\mfp_1),\dots,\sigma^5(\mfp_1)).
    \]
    Because $D_6$ has a unique transitive permutation representation on a $6$ element set, the action of $D_6$ on $\cP_L$ is exactly the rigid symmetries of the hexagon as depicted in \Cref{fig:hexagon}. Because
    \[
    (\nu(\alpha_1), \nu(\sigma \alpha_1), ..., \nu(\sigma^5 \alpha_1)) = \left( 0, 1, \tfrac{1}{2}, 1, 0, \tfrac{1}{2} \right),
    \]
    we have
    \[
        \div_A(\alpha_1) = en \big( [\op_3] + \tfrac{1}{2}[\mfp_2] + [\op_1] + \tfrac{1}{2} [\op_2] \big)
    \]
    where $q = p^n$ and $e$ is the ramification index of $p$ in $L$. Now consider the element $\tau = (1\bar{3})(2\bar{2})(\bar{1}3) \in \wl{D6.6.t.a.1}$ depicted in \Cref{fig:hexagon}. Note that $\tau$ is \emph{not} complex conjugation and in fact $\tau(\alpha_1) = \oalpha_3$. The action of $D_6$ on $\mathcal{P}_L$ allows us to compute $\div_A(\oalpha_3)$, and it is:
    \[
        \div_A(\oalpha_3) = \tau(\div_A(\alpha_1)) = en \left( [\mfp_1] + \tfrac{1}{2}[\op_2] + [\mfp_3] + \tfrac{1}{2}[\mfp_2] \right).
    \]
    We have $(\oalpha_3\alpha_1)\OO_L = q\OO_L$, so $\oalpha_3\alpha_1 = qu$ for some unit $u \in \OO_L^\times$. Because $\alpha_1$ and $\oalpha_3$ are both $q$-Weil numbers, the unit $u=\alpha_1\oalpha_3/q$ has absolute value $1$ over all complex places, thus it is a root of unity.
\end{proof}

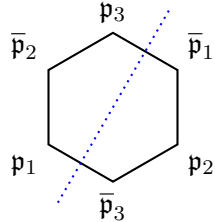
\begin{figure}[ht]
    \centering
    \scalebox{0.99}{
    \begin{tikzpicture}
        \draw[thick] 
        (30:1)  node[anchor=south west] {$\op_1$} -- 
        (90:1)  node[anchor=south] {$\mfp_3$} -- 
        (150:1) node[anchor=south east] {$\op_2$} -- 
        (210:1) node[anchor=north east] {$\mfp_1$} -- 
        (270:1) node[anchor=north] {$\op_3$} -- 
        (330:1) node[anchor=north west] {$\mfp_2$} -- 
        cycle;
        \draw[dotted, thick, blue] (60:1.5) -- (240:1.5);  
    \end{tikzpicture}
    }
    \caption{The action of $G_A$ on $\cP_L$. The permutation $\tau \in G_A$ is reflection with respect to the dotted line.}
    \label{fig:hexagon}
\end{figure}
\FloatBarrier

\subsection{Proof of \texorpdfstring{\Cref{thm:main-thm-3folds}}{Theorem ??} in the Newton polygon (C) case}

The following lemma completes the proof. 
\begin{lemma}
  An abelian threefold with Newton polygon (C) cannot have Galois group $G_A \cong C_6$.
\end{lemma}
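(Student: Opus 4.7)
The plan is to assume $G_A \cong C_6$ for contradiction, use the structure of Newton polygon (C) to force $p$ to split completely in $K_A$, and then derive a contradiction via the Honda--Tate theorem. Since $G_A \cong C_6$ acts faithfully and transitively on $R_A$ and $h_A$ is totally complex, we must have $|R_A| = \deg h_A = 6$, so $K_A / \QQ$ is a cyclic Galois extension of degree $6$ and every root of $h_A$ has trivial stabilizer in $G_A$. Moreover, by \Cref{thm:HT} the Honda--Tate exponent satisfies $\dim A = (\deg h_A) \cdot e_A / 2 = 3 e_A$, so $e_A = 1$.

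The key step is to show that $p$ splits completely in $K_A$. Newton polygon (C) has a \emph{unique} root $\alpha_0 \in R_A$ with $\nu(\alpha_0) = 0$, namely the one on the slope-$0$ segment of length $1$. Let $\mfp \in \cP_L$ be the prime corresponding to the chosen valuation $\nu$, and let $D_\mfp \leq G_A$ be its decomposition group. Every $\sigma \in D_\mfp$ preserves $\nu$, so $\nu(\sigma(\alpha_0)) = 0$; by uniqueness this forces $\sigma(\alpha_0) = \alpha_0$, and so $D_\mfp \subseteq \Stab_{G_A}(\alpha_0) = \{e\}$. Hence $D_\mfp$ is trivial and $p$ splits completely in $K_A$. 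Consequently, $h_A$ factors over $\QQ_p$ into six linear factors, four of which have the form $(T - \alpha)$ with $\nu(\alpha) = \tfrac12$, coming from the slope-$\tfrac12$ segment of Newton polygon (C). Applying condition (b) of \Cref{thm:HT}\ref{enum:HT3} to any such linear factor $g(T) = T - \alpha$ yields $\nu(g(0)^{e_A}) = e_A / 2 \in \ZZ$, so $e_A \geq 2$, contradicting $e_A = 1$.

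The step I expect to require the most care is the isolation of $D_\mfp = \{e\}$: the argument depends both on the uniqueness of the slope-$0$ root --- a feature of Newton polygon (C) not shared by (A) or (B) --- and on the coincidence $|G_A| = |R_A|$ which collapses root stabilizers to the identity. Once these two observations combine to give complete splitting of $p$ in $K_A$, the Honda--Tate obstruction follows immediately, slope-$\tfrac12$ linear factors over $\QQ_p$ being incompatible with $e_A = 1$.
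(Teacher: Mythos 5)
Your proof is correct, but it takes a genuinely different route from the paper's. The paper works with the divisor map: it reduces to the representative $\wl{C6.6.t.a.2}$ generated by $\sigma=(123\bar{1}\bar{2}\bar{3})$, invokes \Cref{prop:div-properties}\ref{enum:div-property-4} to get $\#\cP_L\mid 3$, concludes that the $C_6$-action on $\cP_L$ factors through $C_3$, and contradicts this with the failure of $3$-periodicity of the valuation sequence $(0,\tfrac12,\tfrac12,1,\tfrac12,\tfrac12)$. You instead exploit the \emph{unique} slope-$0$ root together with the regularity of the $C_6$-action on $R_A$ to kill the decomposition group, deduce that $p$ splits completely in $K_A=L$, and then contradict condition (b) of \Cref{thm:HT}\ref{enum:HT3}: a linear factor over $\QQ_p$ whose root has valuation $\tfrac12$ forces $e_A$ to be even, against $e_A=1$. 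Both arguments are sound (and your decomposition-group step is carefully justified: elements of $D_\mfp$ preserve $\nu$, hence fix $\alpha_0$, hence are trivial). The paper's version has the advantage of uniformity --- the same $\#\cP_L$-plus-periodicity template is reused for the almost ordinary and type (D) cases --- while yours is more self-contained, is manifestly independent of which $W_6$-conjugate of $C_6$ one takes, and is closer in spirit to the paper's own remark following \Cref{lemma:ao-dim2}, where the non-Galois conclusion is likewise read off from the $\QQ_p$-factorization forced by the Newton polygon.
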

\begin{proof}
  Suppose for the sake of contradiction that $G_A \cong C_6$. It suffices to show that the image of the weighted permutation representation is not conjugate to $\wl{C6.6.t.a.2}$, which is generated by $\sigma = (123\bar{1}\bar{2}\bar{3})$. By \Cref{prop:div-properties}\ref{enum:div-property-4}, we have $\# \cP_L \mid 3$. But then the action of $G_A$ on $\cP_L$ factors through $C_3$ and in particular the sequence 
  \[
    (\nu(\alpha_1), \nu(\sigma \alpha_1), ..., \nu(\sigma^5 \alpha_1)) = \left(0, \tfrac{1}{2}, \tfrac{1}{2}, 1, \tfrac{1}{2}, \tfrac{1}{2} \right)
  \]
  should be $3$-periodic, a contradiction.
\end{proof}

\subsection{Proof of \texorpdfstring{\Cref{thm:main-thm-3folds}}{Theorem ??} in the Newton polygon (D) case}\label{sec:non-ss-prank0-3}
The following lemmas complete the proof. 
\begin{lemma}
A type (D) abelian threefold cannot have weighted permutation representation whose image is $w_A$-conjugate to either $\wl{C6.6.t.a.2}$ or $\wl{D6.6.t.a.4}$.
\end{lemma}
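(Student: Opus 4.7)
The approach is the same divisor-map strategy already used for the Newton polygon~(C) and almost-ordinary cases. Any simple type~(D) threefold has Newton polygon consisting of two segments of slopes $1/3$ and $2/3$, each of horizontal length $3$ and containing only its two endpoints as lattice points, so \Cref{prop:div-properties}\ref{enum:div-property-4} gives $\#\cP_L \mid \#G_A/3$. This forces $\#\cP_L \in \{1,2\}$ when $G_A \cong C_6$ and $\#\cP_L \in \{1,2,4\}$ when $G_A \cong D_6$.

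For $\wl{C6.6.t.a.2}$, generated by $\sigma = (1\,2\,3\,\bar{1}\,\bar{2}\,\bar{3})$, I would run the periodicity argument of \Cref{lem:ao-not-C6}. The orbit of $\alpha_1$ under $\sigma$ has valuation tuple
\begin{equation*}
 \bigl(\nu(\alpha_1),\, \nu(\sigma\alpha_1),\, \ldots,\, \nu(\sigma^5 \alpha_1)\bigr) = \bigl(\tfrac{1}{3},\, \tfrac{1}{3},\, \tfrac{1}{3},\, \tfrac{2}{3},\, \tfrac{2}{3},\, \tfrac{2}{3}\bigr),
\end{equation*}
which must be $k$-periodic for $k = \#\cP_L \in \{1,2\}$ because the decomposition group of the distinguished prime preserves all $p$-adic valuations. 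Neither period is realised, contradicting the existence of this representation.

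For $\wl{D6.6.t.a.4}$, the cyclic-subgroup periodicity argument alone is insufficient: an order-$6$ element of shape $(1\,\bar{2}\,3\,\bar{1}\,2\,\bar{3})$ gives the $2$-periodic tuple $(\tfrac{1}{3},\tfrac{2}{3},\tfrac{1}{3},\tfrac{2}{3},\tfrac{1}{3},\tfrac{2}{3})$, which is compatible with $\#\cP_L = 2$. I would then follow the strategy of \Cref{lem:aoC6}: pin down the generators of $\wl{D6.6.t.a.4}$, arrange $\cP_L$ as the vertices of a hexagon carrying the transitive $D_6$-action, and compute $\div_A(\alpha_1)$ and $\div_A(\tau \alpha_1)$ for a carefully chosen non-complex-conjugation reflection $\tau \in D_6$ via $D_6$-equivariance of $\div_A$. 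Combined with the identity $\div_A([\alpha] + [\overline{\alpha}]) = \div_A(q)$ from \Cref{prop:div-properties}, this should force $\alpha_1 \tau(\alpha_1)/q$ to be a unit of $\cO_L$ of complex absolute value $1$, hence a root of unity. Such a multiplicative relation collapses the angle rank strictly below the value advertised for $\wl{D6.6.t.a.4}$ (via \Cref{prop:newton-matrix-rank}), giving the desired contradiction.

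The main obstacle will be the dihedral case: one must identify which of the four $w_A$-conjugacy classes carries the label $\wl{D6.6.t.a.4}$, and select the reflection $\tau$ and hexagonal labelling so that the argument rules out only this class while preserving the other three $D_6$'s which \emph{do} occur. This is the direct analogue of how \Cref{lem:aoC6} singles out the unique non-occurring $D_6$ in the almost-ordinary case; here the valuation profile $\{1/3, 2/3\}$ plays the role of $\{0, 1/2, 1\}$, and the explicit hexagon computation must be redone on the specific generators.
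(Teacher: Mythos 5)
Your treatment of $\wl{C6.6.t.a.2}$ is correct and coincides with the paper's argument. The gap is in the dihedral case, and it stems from exactly the ``main obstacle'' you flagged but did not resolve: you analysed the wrong order-$6$ element. By the paper's table of generators, $\wl{D6.6.t.a.4}$ is generated by $\sigma = (123\bar{1}\bar{2}\bar{3})$ and $(2\bar{3})(\bar{2}3)$; the element $(1\bar{2}3\bar{1}2\bar{3})$, whose $2$-periodic valuation profile worried you, is instead the rotation of $\wl{D6.6.t.a.1}$ --- precisely the $D_6$ class that \emph{does} occur for type (D), with angle rank $1$. For $\wl{D6.6.t.a.4}$ the orbit of $\alpha_1$ under $\sigma$ has valuation tuple $\left(\tfrac{1}{3},\tfrac{1}{3},\tfrac{1}{3},\tfrac{2}{3},\tfrac{2}{3},\tfrac{2}{3}\right)$, which is not $2$-periodic, and the periodicity argument you deemed insufficient in fact goes through: since $\#\cP_L \mid 4$ and a transitive $D_6$-action on a set of size $1$, $2$ or $4$ factors through a quotient isomorphic to $1$, $C_2$ or $V_4$, the element $\sigma$ acts on $\cP_L$ with order at most $2$, so the tuple would have to be $2$-periodic. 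This single observation is how the paper disposes of both $\wl{C6.6.t.a.2}$ and $\wl{D6.6.t.a.4}$ at once.

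The fallback you propose for the dihedral case also does not work as described. The hexagon computation of \Cref{lem:aoC6} hinges on first proving $\#\cP_L = 6$, so that $\cP_L$ is a torsor under the $D_6$-action; here \Cref{prop:div-properties}\ref{enum:div-property-4} forces $\#\cP_L \mid 4$, so there is no hexagon to draw and no reflection $\tau$ acting on six primes. One could try to extract a forbidden multiplicative relation from a $4$-element $\cP_L$ carrying the $V_4$-action, but you would have to set that up from scratch, and it is unnecessary once the correct generator of $\wl{D6.6.t.a.4}$ is in hand.
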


\begin{proof}
  Suppose for the sake of contradiction that the image of the weighted permutation representation is $w_A$-conjugate to either \wl{C6.6.t.a.2} or \wl{D6.6.t.a.4}. By \Cref{prop:div-properties}\ref{enum:div-property-4}, we have $\# \cP_L \mid 4$ where $\cP_L$ is the set of primes of $L$ dividing $p$. But then the action of the order $6$ element $\sigma = (123\bar{1}\bar{2}\bar{3}) \in G_A$ on $\cP_L$ factors through $C_2$. But the sequence 
  \[
    (\nu(\alpha_1), \nu(\sigma\alpha_1), ..., \nu(\sigma^5\alpha_1)) =  \left( \tfrac{1}{3}, \tfrac{1}{3}, \tfrac{1}{3}, \tfrac{2}{3},
      \tfrac{2}{3}, \tfrac{2}{3} \right)
  \]
  is not $2$-periodic, a contradiction.
\end{proof}

\subsection{Proof of \texorpdfstring{\Cref{thm:main-thm-3folds}}{Theorem ??} in the supersingular case}\label{sec:ss-3}
\begin{lemma}
  The sextic field generated by the Frobenius eigenvalues of a simple supersingular abelian threefold must be $\QQ(\zeta_7)$ or $\QQ(\zeta_9)$, both of which have Galois group $C_6$.
\end{lemma}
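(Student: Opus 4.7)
The plan is to use the supersingular hypothesis to write Frobenius eigenvalues in the form $\sqrt{q}\zeta$ with $\zeta$ a root of unity, and then to force the sextic field to be cyclotomic by a short degree count.

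First I would establish that the sextic field in the statement equals $K_A = \QQ(\pi)$ for any Frobenius eigenvalue $\pi$, and that this field is Galois over $\QQ$. Since $A$ is supersingular, $\pi/\sqrt{q}$ is a root of unity and we may write $\pi = \sqrt{q}\zeta$. By Honda--Tate, $e \cdot \deg h_A = 6$. The cases $\deg h_A \in \{1, 2\}$ give $K_A$ of degree at most $2$ and cannot produce a sextic splitting field. The case $\deg h_A = 3$ is impossible: a cubic $K_A$ has a real embedding, in which $\pi = \sqrt{q}\zeta$ is real, forcing $\zeta = \pm 1$ and $K_A \subseteq \QQ(\sqrt{q})$ of degree at most $2$. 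So $\deg h_A = 6$, and since the splitting field is sextic, $K_A$ is Galois of degree $6$.

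Next comes the main degree count. From $\pi^2 = q\zeta^2$ and $q \in \QQ \subseteq K_A$ we obtain $\zeta^2 \in K_A$, hence $\QQ(\zeta_m) \subseteq K_A$, where $m$ is the order of $\zeta^2$. Since $\pi$ is a root of the polynomial $X^2 - q\zeta^2 \in \QQ(\zeta_m)[X]$, we have $[K_A : \QQ(\zeta_m)] \leq 2$. Combined with $[K_A : \QQ] = 6$, this gives $\varphi(m) \in \{3, 6\}$; but no integer $m$ satisfies $\varphi(m) = 3$, so $\varphi(m) = 6$ and $K_A = \QQ(\zeta_m)$ with $m \in \{7, 9, 14, 18\}$. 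Since $\QQ(\zeta_{14}) = \QQ(\zeta_7)$ and $\QQ(\zeta_{18}) = \QQ(\zeta_9)$, we conclude $K_A \in \{\QQ(\zeta_7), \QQ(\zeta_9)\}$. The Galois group is $(\ZZ/n\ZZ)^\times$, which is cyclic of order $6$ in both cases.

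The main potential obstacle is the first step, ruling out $\deg h_A = 3$ (where a non-Galois cubic $K_A$ would give a sextic splitting field with Galois group $S_3$); the supersingular hypothesis dispatches this via the real-embedding argument. The remaining degree count is routine.
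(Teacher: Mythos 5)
Your proposal is correct, and its core degree count is a genuinely different (and cleaner) route than the paper's. Both arguments start the same way: supersingularity forces angle rank $0$, so each eigenvalue has the form $\pi = \sqrt{q}\,\zeta$ with $\zeta$ a root of unity. From there the paper (following Nart--Ritzenthaler) writes $K_A \cong \QQ(\zeta_m\sqrt{q})$ with $m$ minimal and runs a case analysis --- $q$ a square or not, $m$ odd or even --- enumerating the $n$ with $\QQ(\zeta_n)\subseteq K_A$ and eliminating the small ones individually. Your observation that $\zeta^2 = \pi^2/q$ already lies in $K_A = \QQ(\pi)$, so that $\QQ(\zeta^2)$ is a cyclotomic subfield of index at most $2$, collapses all of this into the single computation $\varphi(m)\in\{3,6\}$, with $\varphi(m)=3$ impossible; no parity or square/non-square splits are needed. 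The extra bookkeeping you carry out --- identifying the sextic splitting field with $K_A$ itself by ruling out $\deg h_A\in\{1,2,3\}$, in particular excluding a cubic $K_A$ with $S_3$ closure via the real-embedding argument --- is correct and is something the paper instead absorbs into its standing assumption for the section (permutation representation contained in $W_6$, the residual $W_2$ case being handled by Xing's classification). Making that step explicit is a small but genuine gain in self-containedness.
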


\begin{proof}
  We follow the argument in \cite[Proposition~2.1]{NartRitzenthaler08} (note that the characteristic of the base field in \textit{loc. cit.} is $2$). Supersingular abelian varieties have angle rank $0$, so the sextic field $K$ (as defined in \Cref{def:K}) is of the form $K \cong \QQ(\zeta_m \sqrt{q})$ where $\zeta_m$ is a primitive $m^\text{th}$-root of unity. Choose $m$ to be the smallest integer such that $\zeta_m\sqrt{q}$ generates $K$. 

  If $q$ is a square, then the only cyclotomic fields of degree $6$ are $\QQ(\zeta_7)$ or $\QQ(\zeta_9)$, so we are done. 

  Suppose now that $q$ is not a square. If $m$ is odd then $K$ contains the field $\QQ(\zeta_m)$. Thus $[\QQ(\zeta_m) : \QQ] \leq 6$ and $m = 3,7,9$. If $m = 3$ then $[K : \QQ] \leq [\QQ(\zeta_3,\sqrt{q}) : \QQ] = 4$. If $m = 7,9$, then $6 = [K : \QQ] \geq  [\QQ(\zeta_m) : \QQ] = 6$, so $K = \QQ(\zeta_m)$.

  Now suppose $m = 2n$ is even. Since $K$ contains $\QQ(\zeta_m^2) = \QQ(\zeta_{n})$ we have $n = 3,4,6,7,9,14$. If $n = 7,9,14$ then $6 = [K : \QQ] \geq [\QQ(\zeta_{n}) : \QQ] = 6$, so $K = \QQ(\zeta_{n})$. If $n = 3$, then $[K : \QQ] \leq [\QQ(\zeta_6, \sqrt{q}) : \QQ] = 4$, contradiction. If $n = 6$, then $K = \QQ(\zeta_{12} \sqrt{q})$. However, this is a quadratic extension of the quadratic field $\QQ(\zeta_6)$, so $[K : \QQ] = 4$, which is a contradiction.
\end{proof}

\subsection{When are simple abelian threefolds geometrically simple?}
\label{subsec:geom-simple}
By the Honda--Tate theorem every supersingular abelian variety is not geometrically simple. Moreover, by \Cref{coro:not-geom-simple=>angle-rank-1} a simple ordinary threefold which is neither supersingular nor geometrically simple must be ordinary. 

Suppose that $A$ is a simple ordinary abelian threefold. In this case all possible weighted permutation representations occur, and it suffices to show that if $A$ is geometrically simple if it has angle rank $3$ and not geometrically simple if it has angle rank $1$.

\begin{lemma}
\label{lem:geom-simple<=>max-angle-rank}
Let $A$ be a simple ordinary abelian threefold. Then the angle rank of $A$ is $3$ if $A$ is geometrically simple and $1$ otherwise.
\end{lemma}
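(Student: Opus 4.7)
The plan is to handle the two implications separately. One direction is already in hand: if $A$ is not geometrically simple then \Cref{coro:not-geom-simple=>angle-rank-1} gives $\delta_A = 1$ directly. For the converse, I would assume $A$ is geometrically simple with $\delta_A \leq 2$ and derive a contradiction by case analysis on $\delta_A \in \{0,1,2\}$.

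Label the three Frobenius eigenvalues of slope zero by $\alpha_1, \alpha_2, \alpha_3$, so that each normalized eigenvalue $u_i \colonequals \alpha_i/\sqrt{q}$ satisfies $\nu(u_i) = -\tfrac{1}{2}$. Any multiplicative relation $u_1^{a_1} u_2^{a_2} u_3^{a_3} = \zeta$ with $\zeta$ a root of unity then forces $a_1 + a_2 + a_3 = 0$ on taking $p$-adic valuations. The case $\delta_A = 0$ is ruled out immediately: it would force each $u_i$ to be a root of unity, contradicting the ordinary hypothesis. The case $\delta_A = 1$ then mirrors the strategy of \Cref{lemma:abs-simp-ord=>max-angle-rank}: because $U_A$ has rank one and the three $u_i$ all have the same $p$-adic valuation, their images in $U_A$ modulo torsion must coincide, so $u_1/u_2$ is a root of unity. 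Hence $\alpha_1^N = \alpha_2^N$ for some $N$, and the Frobenius polynomial of $A_{\FF_{q^N}}$ acquires a repeated root, contradicting geometric simplicity by \Cref{lemma:simple-splits-power-of-EC}.

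The main obstacle is the case $\delta_A = 2$, which has no analogue in the surface setting treated by \Cref{lemma:abs-simp-ord=>max-angle-rank}. Here the kernel of the relation map $\QQ^3 \to U_A \otimes \QQ$ is one-dimensional, spanned by some nonzero $(a_1, a_2, a_3)$ with $a_1 + a_2 + a_3 = 0$. This span is $G_A$-invariant under the signed permutation action inherited from $G_A \subset W_6$, and since the action is by integer matrices of finite order it must act on the line by $\pm 1$. Simplicity of $A$ forces $G_A$ to act transitively on the six Frobenius eigenvalues, and hence transitively on the three complex conjugate pairs labelled by $\{1,2,3\}$. Choosing elements of $G_A$ whose images in $S_3$ send $1$ to $j$ for each $j \in \{2,3\}$ and comparing the resulting signed permutations of $(a_1,a_2,a_3)$ to $\pm(a_1,a_2,a_3)$, one concludes that $|a_1| = |a_2| = |a_3|$. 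But three nonzero integers of equal absolute value cannot sum to zero, yielding the desired contradiction. I expect this transitivity-plus-integrality analysis of the signed permutation representation to be the crux of the argument; the $\delta_A \in \{0,1\}$ cases reduce quickly to results already established in the paper.
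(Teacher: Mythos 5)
Your proof is correct, but it takes a genuinely different route from the paper for the forward implication. The paper's proof is two lines: the ``not geometrically simple'' direction is \Cref{coro:not-geom-simple=>angle-rank-1} (exactly as you do it), while ``geometrically simple $\Rightarrow \delta_A = 3$'' is outsourced to \cite[Lemma~6.2.2]{APBS2023} as a black box. You instead give a self-contained argument using the machinery the paper itself develops: the $\delta_A=0$ case contradicts ordinarity, the $\delta_A=1$ case reduces to the base-change argument of \Cref{lemma:abs-simp-ord=>max-angle-rank} (your use of the valuation homomorphism on $U_A/\mathrm{tors}\cong\ZZ$ to force $u_1,u_2,u_3$ to have equal images is clean and correct), and the $\delta_A=2$ case is handled by observing that the one-dimensional kernel of $\QQ^3\to U_A\otimes\QQ$ is a $G_A$-stable line on which a finite group acts by $\pm 1$, so transitivity on the conjugate pairs forces $|a_1|=|a_2|=|a_3|$, incompatible with $a_1+a_2+a_3=0$. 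This is precisely the kind of signed-permutation/integrality analysis the paper uses elsewhere (compare \Cref{lemma:big-G}), so your argument is arguably more in the spirit of the paper than its own citation; what the paper's route buys is brevity. Two small points: in the $\delta_A=1$ case the contradiction with a repeated Frobenius eigenvalue after base change really comes from the Honda--Tate theorem (an ordinary simple abelian variety has $e_A=1$, hence irreducible, hence separable Frobenius polynomial), not from \Cref{lemma:simple-splits-power-of-EC} as cited --- though this is the same leap the paper makes in \Cref{lemma:abs-simp-ord=>max-angle-rank}; and you should note explicitly that ordinarity forces $\deg h_A = 6$, so that simplicity does give transitivity on all six eigenvalues and hence on the three pairs.
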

\begin{proof}
  If $A$ is geometrically simple, then $\delta_A = 3$ by \cite[Lemma~6.2.2]{APBS2023}. If $A$ is not geometrically simple then $A$ has angle rank $1$ by \Cref{coro:not-geom-simple=>angle-rank-1}.
\end{proof}

\FloatBarrier
\section{Inverse Galois Problems}\label{sec:questions}
We state a slight refinement of a conjecture Dupuy, Kedlaya, Roe, and Vincent \cite[Conjecture 2.7]{DupuyKedlayaRoeVincent21}.

\begin{conjecture}[Refined inverse Galois problem for abelian varieties]
  \label{conj:IGP}
  Fix a prime number $p$ and let $G \subset W_{2d}$ be a transitive subgroup containing the complex conjugation element. Then:
  \begin{enumerate}
  \item 
    there exists an integer $r \geq 1$ and a simple abelian variety $A/\FF_{p^r}$ of dimension $d$ such that $G$ is $w_A$-conjugate to the image of the weighted permutation representation associated to $A$, and
  \item 
    the abelian variety $A$ may be taken to be ordinary.
  \end{enumerate}
  In particular, $G$ is isomorphic to the Galois group of some abelian variety $A$.
\end{conjecture}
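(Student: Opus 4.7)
The plan is to reduce the conjecture to two classical existence problems---the inverse Galois problem for $\QQ$ and the existence of a suitable Weil number in a prescribed CM field---and to combine them via the Honda--Tate theorem.

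Step one is the Galois-theoretic construction. Since $G \subset W_{2d}$ acts transitively on $X_{2d}$ and contains the complex conjugation element $\iota$, the point stabilizer $H \colonequals \Stab(1)$ has index $2d$ and does not contain $\iota$, while $H' \colonequals \Stab(\brk{1,\bar{1}})$ has index $d$ and properly contains $H$. If one can realize $G$ as $\Gal(L/\QQ)$ for some number field $L$, then $K \colonequals L^H$ is a degree-$2d$ extension, its subfield $F \colonequals L^{H'}$ is of degree $d$, and the quadratic extension $K/F$ is CM provided one can arrange complex conjugation on $L$ (for a suitable archimedean place) to act as $\iota$. This archimedean condition can be imposed by Hilbert irreducibility or real-place specialization whenever a generic polynomial with Galois group $G$ is available.

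Step two produces a Weil number inside $K$. For the ordinary refinement one needs a prime $p$ and integer $r \geq 1$ together with $\alpha \in \OO_K$ such that $\alpha\bar\alpha = p^r$ and exactly half of the Galois conjugates of $\alpha$ are $p$-adic units. By Chebotarev's density theorem, one may choose $p$ unramified in $L$ with Frobenius in any prescribed conjugacy class; selecting the Frobenius in the centralizer of $\iota$ and acting freely on the $d$-element quotient $X_{2d}/\langle \iota\rangle$ guarantees that $p$ splits completely in $F$ and that each prime of $F$ above $p$ splits in $K$. With such a $p$, one constructs $\alpha$ via a CM type $\Phi$ on $K/F$ chosen so that the embeddings in $\Phi$ send $\alpha$ to units while those in $\overline{\Phi}$ send it to non-units, following Honda's original argument. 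The Honda--Tate theorem then yields a simple abelian variety $A/\FF_{p^r}$ with $h_A$ equal to the minimal polynomial of $\alpha$; in the ordinary case $e_A = 1$, so $\dim A = d$ and, by construction, the weighted permutation representation associated to $A$ is $w_A$-conjugate to $G$.

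The crux is step one: the inverse Galois problem for $\QQ$ is open for arbitrary finite groups, so the full conjecture is at least that hard. A feasible plan is to treat families separately---$G = W_{2d}$ via generic CM polynomials, $G$ solvable via Shafarevich's theorem, wreath-product groups such as those appearing in Figures~\ref{fig:flowchart2-simple}--\ref{fig:flowchart3} by induction on $d$---and, in each fixed dimension, to handle any remaining finitely many groups by explicit constructions, which is tractable for $d \leq 3$ given the classification produced in this article. The ordinarity refinement then adds only mild density constraints on top, since Chebotarev supplies an abundance of primes with the required splitting once $L$ has been constructed.
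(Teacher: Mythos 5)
The statement you are asked to prove is a \emph{conjecture}: the paper states it as a refinement of \cite[Conjecture 2.7]{DupuyKedlayaRoeVincent21} and offers no proof. Indeed, the proposition immediately following it shows that the conjecture \emph{implies} the inverse Galois problem over $\QQ$ (even with totally real splitting fields), so any complete proof would have to resolve that open problem. Your proposal is candid about this, and what you have written is not a proof but a reduction of the conjecture to (a strengthening of) the inverse Galois problem together with a Honda--Tate/CM-type construction. That reduction is essentially the expected route and is consistent with the paper's own remarks, but it leaves the conjecture exactly as open as before.

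Beyond the unavoidable dependence on the inverse Galois problem, your Step one quietly requires something strictly stronger: you need $G$ realized as $\Gal(L/\QQ)$ in such a way that the central involution $\iota \in G$ is induced by complex conjugation for every archimedean place of $L$, so that $K = L^H$ is genuinely CM over the totally real $F = L^{H'}$. Realizing a group with a \emph{prescribed} central element as complex conjugation is a refinement of the inverse Galois problem that is not known in general, and your appeal to ``Hilbert irreducibility or real-place specialization whenever a generic polynomial with Galois group $G$ is available'' only covers groups for which such generic polynomials exist. In Step two there are further points to nail down: Chebotarev gives you a prime $p$ splitting completely in $L$ (note the centralizer of $\iota$ is all of $G$, so that condition is vacuous), but you must then produce $\alpha$ with $\alpha\bar{\alpha} = p^r$ \emph{exactly} (handled by raising a generator of $\prod_{\mathfrak{p}\in\Phi}\mathfrak{p}^{hr}$ to a suitable power and correcting by units, \`a la Honda), verify that $\alpha$ generates $K$ rather than a proper subfield so that the Galois action on its conjugates really is the coset action of $G$ on $G/H$, and check that the resulting indexing places the $p$-adic units on $\{1,\dots,d\}$ so that the image is $w_A$-conjugate (not merely $W_{2d}$-conjugate) to $G$. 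These are all plausible but each requires an argument; as written, the proposal is a programme rather than a proof, which is the appropriate status for a statement the paper itself labels a conjecture.
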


We note that \Cref{conj:IGP} is a very strong statement. In particular it implies the inverse Galois problem holds even when we are restricted to totally real fields.

\begin{proposition}
  Assume \Cref{conj:IGP}, then the inverse Galois problem holds for totally real fields. More precisely, let $d \geq 1$ and let $G \subset S_d$ be a transitive subgroup, then $G$ is the Galois group of a polynomial $P^+(T)$ of degree $d$ over $\QQ$ whose splitting field is totally real.
\end{proposition}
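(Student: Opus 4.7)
The plan is to apply \Cref{conj:IGP} to the wreath product $C_2 \wr G$, realized as a transitive subgroup of $W_{2d}$, and then to descend to the maximal totally real subfield of the resulting CM field. First, observe that the natural surjection $W_{2d} \twoheadrightarrow S_d$ (recording the action on the set of $d$ complex-conjugate pairs $\brk{i, \bar{i}}$) has kernel $C_2^d$, generated by the transpositions $(i\,\bar{i})$. Given the transitive subgroup $G \subset S_d$, let $H \subset W_{2d}$ denote its preimage under this surjection, so $H \cong C_2 \wr G$. Then $H$ is transitive on $X_{2d}$ and contains $\iota = (1\bar{1})(2\bar{2}) \cdots (d\bar{d})$, since $\iota \in C_2^d \subset H$.

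Next, I would apply \Cref{conj:IGP} to $H$ to obtain a prime $p$, an integer $r \geq 1$, and an ordinary simple abelian variety $A/\FF_{p^r}$ of dimension $d$ whose weighted permutation representation has image $w_A$-conjugate to $H$. After relabelling the roots, we may assume $\cclass{A} = H$. Then $G_A \cong C_2 \wr G$, and $N \colonequals G_A \cap C_2^d$ is normal in $G_A$ with quotient $G_A/N \cong G$; concretely, $N$ is the kernel of the induced action of $G_A$ on the $d$ complex-conjugate pairs of roots of $h_A(T)$. Since $A$ is ordinary we have $e_A = 1$, so $h_A(T)$ has degree $2d$ and $K_A = \QQ[T]/h_A(T)$ is a CM field of degree $2d$.

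Finally, let $L$ be the Galois closure of $K_A/\QQ$, so $\Gal(L/\QQ) = G_A$. For any root $\alpha$ of $h_A(T)$, ordinariness forces $\alpha \neq \bar{\alpha}$ and the $q$-Weil condition gives $\alpha\bar{\alpha} = q$; hence $\alpha + \bar{\alpha}$ is a totally real algebraic integer generating the maximal totally real subfield of $K_A$, which has degree $d$ over $\QQ$. Its minimal polynomial $P^+(T) \in \ZZ[T]$ therefore has degree $d$, splitting field $L^N$, and Galois group $G_A/N \cong G$ acting transitively on the roots (as $G$ is transitive on $\brk{1,\dots,d}$). The concluding observation is that $\iota \in N$ forces $L^N \subseteq L^{\langle \iota \rangle}$, and the latter is precisely the maximal totally real subfield of $L$; hence $L^N$ is totally real, as required. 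The only step that requires genuine care is the choice of ambient group: one must pass to $C_2 \wr G$ rather than invoking the conjecture directly on $G$, because \Cref{conj:IGP} only applies to subgroups of $W_{2d}$ containing the complex conjugation element $\iota$; and the wreath product is the smallest such subgroup whose quotient by the ``sign'' kernel recovers $G$.
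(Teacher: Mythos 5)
Your proposal is correct and follows essentially the same route as the paper: realize $C_2 \wr G$ as a transitive subgroup of $W_{2d}$ containing $\iota$, invoke \Cref{conj:IGP}, and pass to the trace polynomial $\prod(T-(\alpha+\oalpha))$. You supply slightly more detail than the paper (identifying the splitting field as $L^N$ for $N = G_A \cap C_2^d$ and deducing total reality from $\iota \in N$), but the underlying argument is identical.
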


\begin{proof}
  Let $G \subset S_d$ be a transitive subgroup, and consider the group $\widetilde{G} = C_2 \wr G$ equipped with its natural embedding $\widetilde{G} \hookrightarrow W_{2d}$. Note that $\widetilde{G}$ is a transitive subgroup of $W_{2d}$ which contains the complex conjugation element, so by assumption it occurs as the Galois group of a $q$-Weil polynomial $P_A(T)$. Let $P_A^+(T)$ be the \cdef{trace polynomial} of $P_A(T)$ defined by the equation
  \begin{equation*}
    P_A^+(T) = \prod_{\alpha}(T - (\alpha + \oalpha)),
  \end{equation*}
  where $\alpha$ ranges over the roots of $P_A(T)$. It follows by construction that $\Gal(P_A^+(T))$ is isomorphic to $G$ and that every root of $P_A^+(T)$ is real.
\end{proof}

\appendix\section{Tables}
In \Cref{tab:W4-and-W6-subgroups} we record our labelling convention for subgroups of $W_4$ and $W_6$. In particular, we list every subgroup of $W_4$ and every transitive subgroup of $W_6$ containing complex conjugation.

\subsection{Abelian surfaces}
Tables~\ref{tab:dim2-A}--\ref{tab:dim2-C-ns} record the possible subgroups $G \subset W_4$ which may occur as the ($w_A$-conjugacy class of the) images of weighted permutation representations associated to abelian surfaces $A$. For each case which does occur, we provide an example from the LMFDB~\cite{lmfdb}. The tables are separated by Newton polygon (according to the conventions in \Cref{fig:flowchart2-simple}) and by whether $A$ is simple.

\subsubsection{Simple abelian surfaces}
These cases are treated in Tables~\ref{tab:dim2-A}--\ref{tab:dim2-C}. We do not list intransitive subgroups which do not occur as the image of the weighted permutation representation associated to a simple abelian surface -- note that an intransitive subgroup can only occur for the supersingular abelian surface in \Cref{lem:real-eigenvalues}\ref{enum:complex-ECs}. In each case we record whether every isogeny class of abelian surfaces with the recorded image of the weighted permutation representation is geometrically simple.

\subsubsection{Non-simple abelian surfaces}
These cases are treated in Tables~\ref{tab:dim2-A-ns}--\ref{tab:dim2-C-ns}. Since transitive subgroups of $W_4$ cannot occur as the image of the weighted permutation representation associated to an abelian surface, we do not record them.

\subsection{Abelian threefolds}
Tables~\ref{tab:threefoldA}--\ref{tab:threefoldE} record the possible subgroups $G \subset W_6$ which may occur as the ($w_A$-conjugacy class of the) images of weighted permutation representations associated to simple abelian threefolds $A$. For each case which does occur, we provide an example from the LMFDB~\cite{lmfdb}. The tables are separated by Newton polygon (according to the conventions in \Cref{fig:flowchart3}).
\FloatBarrier

{
  \vfill
  \begin{table}[ht]
    \setlength{\arrayrulewidth}{0.3mm} 
    \setlength{\tabcolsep}{5pt}
    \renewcommand{\arraystretch}{1.15}
    \centering
    \begin{tabular}{|c|C{4.5cm}||c|C{4.5cm}|}
      \hline
      \rowcolor{headercolor}
      Label of $G$       & Generators of $G$             & Label of $G$       & Generators of $G$                                                        \\
      \hline
      \mkwl{W4.4.t.a.1}  & $W_4$                         & \mkwl{W6.6.t.a.1}  & $W_6$                                                                    \\ \hline
      \mkwl{V4.4.t.a.1}  & $\iota, (12)(\bar{1}\bar{2})$ & \mkwl{6T6.6.t.a.1} & $(123)(\bar{1}\bar{2}\bar{3})$, $(1\bar{1})$, $(2\bar{2})$, $(3\bar{3})$ \\ \hline
      \mkwl{C4.4.t.a.1}  & $(12\bar{1}\bar{2})$          & \mkwl{D6.6.t.a.1}  & $(1\bar{2}3\bar{1}2\bar{3}), (23)(\bar{2}\bar{3})$                       \\ \hline
      \mkwl{V4.4.nt.a.1} & $(1\bar{1}), (2\bar{2})$      & \mkwl{D6.6.t.a.2}  & $(12\bar{3}\bar{1}\bar{2}3)$, $(23)(\bar{2}\bar{3})$                     \\ \hline
      \mkwl{C2.4.nt.a.1} & $\iota$                       & \mkwl{D6.6.t.a.3}  & $(1\bar{2}\bar{3} \bar{1}23)$, $(2\bar{3})(\bar{2}3)$                    \\ \hline
      \mkwl{C2.4.nt.b.1} & $(1\bar{1})$                  & \mkwl{D6.6.t.a.4}  & $(123\bar{1}\bar{2}\bar{3})$, $(2\bar{3})(\bar{2}3)$                     \\ \hline
      \mkwl{C2.4.nt.b.2} & $(2\bar{2})$                  & \mkwl{C6.6.t.a.1}  & $(1\bar{2}3\bar{1}2\bar{3})$                                             \\ \hline
      \mkwl{C2.4.nt.c.1} & $(12)(\bar{1}\bar{2})$        & \mkwl{C6.6.t.a.2}  & $(123\bar{1}\bar{2}\bar{3})$                                             \\ \hline
      \mkwl{C2.4.nt.c.2} & $(1\bar{2})(\bar{1}2)$        & \mkwl{C6.6.t.a.3}  & $(12\bar{3}\bar{1}\bar{2}3)$                                             \\ \hline
      \mkwl{C1.4.nt.a.1} & $\operatorname{id}$           &\mkwl{C6.6.t.a.4}   & $(1\bar{2}\bar{3}\bar{1}23)$                                             \\ \hline
    \end{tabular}
    \caption{Labels for subgroups of $W_4$ (left) and subgroups of $W_6$ (right).}
    \label{tab:W4-and-W6-subgroups}
  \end{table}

  \begin{table}[ht]
    \setlength{\arrayrulewidth}{0.3mm} 
    \setlength{\tabcolsep}{5pt}
    \renewcommand{\arraystretch}{1.2}
    \centering
    \begin{tabular}{|c|c|c|c|c|}
      \hline
      \rowcolor{headercolor}
      $w_A$-conjugacy class & Angle rank & Occurs & Geometrically simple & Example(s)        \\
      \hline
      \wl{W4.4.t.a.1}       & $2$        & Yes    & Yes                 & \avlink{2.2.ac_d} \\ \hline
      \wl{V4.4.t.a.1}       & $1$        & Yes    & No                  & \avlink{2.2.ad_f} \\ \hline
      \wl{C4.4.t.a.1}       & $2$        & Yes    & Yes                 & \avlink{2.3.ad_f} \\ \hline
    \end{tabular}
    \caption{The images of the weighted permutation representations associated to a simple ordinary abelian surface (Newton polygon (A) in \Cref{fig:flowchart2-simple}).}
    \label{tab:dim2-A}
  \end{table}
  \vfill
}

\begin{table}[ht]
  \setlength{\arrayrulewidth}{0.3mm} 
  \setlength{\tabcolsep}{5pt}
  \renewcommand{\arraystretch}{1.2}
  \centering
  \begin{tabular}{|c|c|c|c|c|}
    \hline
    \rowcolor{headercolor}
    $w_A$-conjugacy class & Angle rank & Occurs & Geometrically simple & Example(s)        \\
    \hline
    \wl{W4.4.t.a.1}       & $2$        & Yes    & Yes & \avlink{2.2.ab_a} \\ \hline
    \wl{V4.4.t.a.1}       & $2$        & No     &     &                   \\ \hline
    \wl{C4.4.t.a.1}       & $2$        & No     &     &                   \\ \hline
  \end{tabular}
    \caption{The images of the weighted permutation representations associated to simple almost ordinary abelian surfaces (Newton polygon (B) in \Cref{fig:flowchart2-simple}).}
  \label{tab:dim2-B}
\end{table}

\begin{table}[p]
  \setlength{\arrayrulewidth}{0.3mm} 
  \setlength{\tabcolsep}{5pt}
  \renewcommand{\arraystretch}{1.2}
  \centering
  \begin{tabular}{|c|c|c|c|c|}
    \hline
    \rowcolor{headercolor}
    $w_A$-conjugacy class & Angle rank           & Occurs               & Geometrically simple & Example(s)                         \\
    \hline
    \wl{W4.4.t.a.1}       & $2$                  & No                   &                      &                                    \\ \hline
    \wl{V4.4.t.a.1}       & $0$                  & Yes                  & No                   & \avlink{2.2.ac_c}                  \\ \hline
    \wl{C4.4.t.a.1}       & $0$                  & Yes                  & No                   & \avlink{2.4.ac_e}                  \\ \hline
    \wl{C2.4.nt.c.1}      & \multirow{2}{*}{$0$} & \multirow{2}{*}{Yes} & \multirow{2}{*}{No}  & \multirow{2}{*}{\avlink{2.2.a_ae}} \\
    \wl{C2.4.nt.c.2}      &                      &                      &                      &                                    \\ \hline
  \end{tabular}
    \caption{The images of the weighted permutation representations associated to simple supersingular abelian surfaces (Newton polygon (C) in \Cref{fig:flowchart2-simple}).}
  \label{tab:dim2-C}
\end{table}

\begin{table}[p]
  \setlength{\arrayrulewidth}{0.3mm} 
  \setlength{\tabcolsep}{5pt}
  \renewcommand{\arraystretch}{1.2}
  \centering
  \begin{tabular}{|c|c|c|c|}
    \hline
    \rowcolor{headercolor}
    $w_A$-conjugacy class & Angle rank           & Occurs              &  Example(s)       \\ \hline
    \wl{V4.4.nt.a.1}      & $1$                  & Yes                 & \avlink{2.3.ad_i} \\ \hline
    \wl{C2.4.nt.a.1}      & $1$                  & Yes                 & \avlink{2.2.a_d}  \\ \hline
    \wl{C2.4.nt.b.1}      & \multirow{2}{*}{$1$} & \multirow{2}{*}{No} &                   \\
    \wl{C2.4.nt.b.2}      &                      &                     &                   \\ \hline
    \wl{C2.4.nt.c.1}      & $1$                  &  No                 &                   \\ \hline
    \wl{C2.4.nt.c.2}      & $1$                  &  No                 &                   \\ \hline
    \wl{C1.4.nt.a.1}      & $0$                  &  No                 &                   \\ \hline
  \end{tabular}
    \caption{The images of the weighted permutation representations associated to non-simple ordinary abelian surfaces (Newton polygon (A) in \Cref{fig:flowchart2-non-simple}).}
  \label{tab:dim2-A-ns}
\end{table}

\begin{table}[p]
  \setlength{\arrayrulewidth}{0.3mm} 
  \setlength{\tabcolsep}{5pt}
  \renewcommand{\arraystretch}{1.2}
  \centering
  \begin{tabular}{|c|c|c|c|}
    \hline
    \rowcolor{headercolor}
    $w_A$-conjugacy class & Angle rank & Occurs & Example(s)        \\
    \hline
    \wl{V4.4.nt.a.1}      & $1$        & Yes    & \avlink{2.2.ad_g} \\ \hline
    \wl{C2.4.nt.a.1}      & $1$        & No     &                   \\ \hline
    \wl{C2.4.nt.b.1}      & $1$        & No     &                   \\ \hline
    \wl{C2.4.nt.b.2}      & $1$        & Yes    & \avlink{2.4.ah_u} \\ \hline
    \wl{C2.4.nt.c.1}      & $1$        &  No    &                   \\ \hline
    \wl{C2.4.nt.c.2}      & $1$        &  No    &                   \\ \hline
    \wl{C1.4.nt.a.1}      & $0$        &  No    &                   \\ \hline
  \end{tabular}
    \caption{The images of the weighted permutation representations associated to a non-simple almost ordinary abelian surfaces (Newton polygon (B) in \Cref{fig:flowchart2-non-simple}).}
  \label{tab:dim2-B-ns}
\end{table}

\begin{table}[p]
  \setlength{\arrayrulewidth}{0.3mm} 
  \setlength{\tabcolsep}{5pt}
  \renewcommand{\arraystretch}{1.2}
  \centering
  \begin{tabular}{|c|c|c|c|c|}
    \hline
    \rowcolor{headercolor}
    $w_A$-conjugacy class & Angle rank           & Occurs               & Example(s)                          \\ \hline
    \wl{V4.4.nt.a.1}      & $0$                  & Yes                  &  \avlink{2.2.ac_e}                  \\ \hline
    \wl{C2.4.nt.a.1}      & $0$                  & Yes                  &  \avlink{2.2.a_a}                   \\ \hline
    \wl{C2.4.nt.b.1}      & \multirow{2}{*}{$0$} & \multirow{2}{*}{Yes} &  \multirow{2}{*}{\avlink{2.4.ag_q}} \\
    \wl{C2.4.nt.b.2}      &                      &                      &                                     \\ \hline
    \wl{C2.4.nt.c.1}      & \multirow{2}{*}{$0$} & \multirow{2}{*}{No}  &                                     \\
    \wl{C2.4.nt.c.2}      &                      &                      &                                     \\ \hline
    \wl{C1.4.nt.a.1}      & $0$                  & Yes                  & \avlink{2.4.a_ai}                   \\ \hline
  \end{tabular}
    \caption{The images of the weighted permutation representations associated to non-simple supersingular abelian surfaces  (Newton polygon (C) in \Cref{fig:flowchart2-non-simple}).}
  \label{tab:dim2-C-ns}
\end{table}

\begin{table}[p]
  \setlength{\arrayrulewidth}{0.3mm} 
  \setlength{\tabcolsep}{5pt}
  \renewcommand{\arraystretch}{1.2}
  \centering
  \begin{tabular}{|c|c|c|c|c|}
    \hline
    \rowcolor{headercolor}
    $w_A$-conjugacy class & Angle rank           & Occurs               & Geometrically simple & Example(s)                              \\ \hline
    \wl{W6.6.t.a.1}       & $3$                  & Yes                  & Yes                  & \avlink{3.2.ad_f_ah}                    \\ \hline
    \wl{6T6.6.t.a.1}      & $3$                  & Yes                  & Yes                  & \avlink{3.2.ad_g_aj}                    \\ \hline
    \wl{D6.6.t.a.1}       & $1$                  & Yes                  & No                   & \avlink{3.2.a_a_ad}                     \\ \hline
    \wl{D6.6.t.a.2}       & \multirow{3}{*}{$3$} & \multirow{3}{*}{Yes} & \multirow{3}{*}{Yes} & \multirow{3}{*}{\avlink{3.2.ac_a_d}}    \\
    \wl{D6.6.t.a.3}       &                      &                      &                      &                                         \\
    \wl{D6.6.t.a.4}       &                      &                      &                      &                                         \\ \hline
    \wl{C6.6.t.a.1}       & $1$                  & Yes                  & No                   & \avlink{3.2.ae_j_ap}                    \\ \hline
    \wl{C6.6.t.a.2}       & \multirow{3}{*}{$3$} & \multirow{3}{*}{Yes} & \multirow{3}{*}{Yes} & \multirow{3}{*}{\avlink{3.7.ak_bw_afv}} \\
    \wl{C6.6.t.a.3}       &                      &                      &                      &                                         \\
    \wl{C6.6.t.a.4}       &                      &                      &                      &                                         \\ \hline
  \end{tabular}
  \caption{The images of the weighted permutation representations associated to simple ordinary abelian threefolds (Newton polygon (A) in \Cref{fig:flowchart3}).}
  \label{tab:threefoldA}
\end{table}

\begin{table}[p]
  \setlength{\arrayrulewidth}{0.3mm} 
  \setlength{\tabcolsep}{5pt}
  \renewcommand{\arraystretch}{1.2}
  \centering
  \begin{tabular}{|c|c|c|c|c|}
    \hline
    \rowcolor{headercolor}
    $w_A$-conjugacy class & Angle rank           & Occurs               & Geometrically simple & Example                              \\\hline
    \wl{W6.6.t.a.1}       & $3$                  & Yes                  & Yes                  & \avlink{3.2.ab_ab_c}                 \\ \hline
    \wl{6T6.6.t.a.1}      & $3$                  & Yes                  & Yes                  & \avlink{3.4.ac_ab_g}                 \\ \hline
    \wl{D6.6.t.a.1}       & \multirow{2}{*}{$3$} & \multirow{2}{*}{No}  &                      &                                      \\
    \wl{D6.6.t.a.3}       &                      &                      &                      &                                      \\ \hline
    \wl{D6.6.t.a.2}       & \multirow{2}{*}{$2$} & \multirow{2}{*}{Yes} & \multirow{2}{*}{Yes} & \multirow{2}{*}{\avlink{3.2.ac_b_a}} \\
    \wl{D6.6.t.a.4}       &                      &                      &                      &                                      \\ \hline
    \wl{C6.6.t.a.1}       & \multirow{2}{*}{$3$} & \multirow{2}{*}{No}  &                      &                                      \\
    \wl{C6.6.t.a.4}       &                      &                      &                      &                                      \\ \hline
    \wl{C6.6.t.a.2}       & \multirow{2}{*}{$2$} & \multirow{2}{*}{No}  &                      &                                      \\
    \wl{C6.6.t.a.3}       &                      &                      &                      &                                      \\ \hline
  \end{tabular}
  \caption{The images of the weighted permutation representations associated to simple almost ordinary abelian threefold (Newton polygon (B) in \Cref{fig:flowchart3}).}
  \label{tab:threefoldB}
\end{table}

\begin{table}[p]
  \setlength{\arrayrulewidth}{0.3mm} 
  \setlength{\tabcolsep}{5pt}
  \renewcommand{\arraystretch}{1.2}
  \centering
  \begin{tabular}{|c|c|c|c|c|}
    \hline
    \rowcolor{headercolor}
    $w$-conjugacy class & Angle rank           & Occurs               & Geometrically simple  & Example                               \\
    \hline
    \wl{W6.6.t.a.1}     & $3$                  & Yes                  & Yes                  & \avlink{3.2.ab_a_a}                   \\ \hline
    \wl{6T6.6.t.a.1} & $3$                  & Yes                  & Yes                  & \avlink{3.4.ab_c_a}                   \\ \hline
    \wl{D6.6.t.a.1}     & \multirow{4}{*}{$3$} & \multirow{4}{*}{Yes} & \multirow{4}{*}{Yes} & \multirow{4}{*}{\avlink{3.4.ab_a_ae}} \\
    \wl{D6.6.t.a.2}     &                      &                      &                       &                                       \\
    \wl{D6.6.t.a.3}     &                      &                      &                       &                                       \\
    \wl{D6.6.t.a.4}     &                      &                      &                       &                                       \\ \hline
    \wl{C6.6.t.a.1}     & \multirow{4}{*}{$3$} & \multirow{4}{*}{No}  &                       &                                       \\
    \wl{C6.6.t.a.2}     &                      &                      &                       &                                       \\
    \wl{C6.6.t.a.3}     &                      &                      &                       &                                       \\
    \wl{C6.6.t.a.4}     &                      &                      &                       &                                       \\ \hline
  \end{tabular}
  \caption{The images of the weighted permutation representations associated to a simple abelian threefolds with Newton polygon (C) in \Cref{fig:flowchart3}.}
  \label{tab:threefoldC}
\end{table}

\begin{table}[p]
  \setlength{\arrayrulewidth}{0.3mm} 
  \setlength{\tabcolsep}{5pt}
  \renewcommand{\arraystretch}{1.2}
  \centering
  \begin{tabular}{|c|c|c|c|c|}
    \hline
    \rowcolor{headercolor}
    $w$-conjugacy class & Angle rank           & Occurs              & Geometrically simple & Example              \\
    \hline
    \wl{W6.6.t.a.1}     & $3$                  & Yes                 & Yes                 & \avlink{3.2.ac_c_ac} \\ \hline
    \wl{6T6.6.t.a.1}    & $3$                  & Yes                 & Yes                 & \avlink{3.3.ad_j_ap} \\ \hline
    \wl{D6.6.t.a.1}     & $1$                  & Yes                 & Yes                 & \avlink{3.2.a_a_ac}  \\ \hline
    \wl{D6.6.t.a.2}     & \multirow{3}{*}{$3$} & \multirow{3}{*}{No} &                      &                      \\
    \wl{D6.6.t.a.3}     &                      &                     &                      &                      \\
    \wl{D6.6.t.a.4}     &                      &                     &                      &                      \\ \hline
    \wl{C6.6.t.a.1}     & $1$                  & Yes                 & Yes                 & \avlink{3.7.a_a_abj} \\ \hline
    \wl{C6.6.t.a.2}     & \multirow{3}{*}{$3$} & \multirow{3}{*}{No} &                      &                      \\
    \wl{C6.6.t.a.3}     &                      &                     &                      &                      \\
    \wl{C6.6.t.a.4}     &                      &                     &                      &                      \\ \hline
  \end{tabular}
  \caption{The images of the weighted permutation representations associated to simple abelian threefolds with Newton polygon (D) in \Cref{fig:flowchart3}.}
  \label{tab:threefoldD}
\end{table}
\FloatBarrier

\begin{table}[H]
  \setlength{\arrayrulewidth}{0.3mm} 
  \setlength{\tabcolsep}{5pt}
  \renewcommand{\arraystretch}{1.2}
  \centering
  \begin{tabular}{|c|c|c|c|c|}
    \hline
    \rowcolor{headercolor}
    $w$-conjugacy class & Angle rank           & Occurs               & Geometrically simple & Example                              \\
    \hline
    \wl{W6.6.t.a.1}     & $0$                  & No                   &                      &                                      \\ \hline
    \wl{6T6.6.t.a.1}    & $0$                  & No                   &                      &                                      \\ \hline
    \wl{D6.6.t.a.1}     & \multirow{4}{*}{$0$} & \multirow{4}{*}{No}  &                      &                                      \\
    \wl{D6.6.t.a.2}     &                      &                      &                      &                                      \\
    \wl{D6.6.t.a.3}     &                      &                      &                      &                                      \\
    \wl{D6.6.t.a.4}     &                      &                      &                      &                                      \\ \hline
    \wl{C6.6.t.a.1}     & \multirow{4}{*}{$0$} & \multirow{4}{*}{Yes} & \multirow{4}{*}{No} & \multirow{4}{*}{\avlink{3.3.a_a_aj}} \\
    \wl{C6.6.t.a.2}     &                      &                      &                      &                                      \\
    \wl{C6.6.t.a.3}     &                      &                      &                      &                                      \\
    \wl{C6.6.t.a.4}     &                      &                      &                      &                                      \\ \hline
  \end{tabular}
  \caption{The images of the weighted permutation representations associated to simple almost ordinary abelian threefolds (Newton polygon (E) in \Cref{fig:flowchart3}).}
  \label{tab:threefoldE}
\end{table}

\FloatBarrier
\providecommand{\bysame}{\leavevmode\hbox to3em{\hrulefill}\thinspace}
\providecommand{\MR}{\relax\ifhmode\unskip\space\fi MR }
\providecommand{\MRhref}[2]{%
  \href{http://www.ams.org/mathscinet-getitem?mr=#1}{#2}
}
\providecommand{\href}[2]{#2}

\end{document}